\newtheorem{theorem}{Theorem}
\newtheorem{remark}{Remark}
\newtheorem{lemma}{Lemma}
\newtheorem{algorithm}{Algorithm}
\newtheorem{problem}{Problem}
\newcommand{\nmgrad}[1]{\|#1\|_{1,k}}
\newcommand{\nmzero}[1]{\|#1\|_{0,k}}
\newcommand{\nmcurl}[1]{\|#1\|_{{\rm curl},k}}
\newcommand{\nmdiv}[1]{\|#1\|_{{\rm div},k}} 
\begin{document}

\title{Stable Finite Element Methods Preserving
  $\nabla \cdot \bm{B}=0$ Exactly for MHD Models \thanks{This material
    is based upon work supported in part by the US Department of
    Energy Office of Science, Office of Advanced Scientific Computing
    Research, Applied Mathematics program under Award Number
    DE-SC-0006903 and by Beijing International Center for Mathematical
    Research of Peking University, China.}
}

 \author[1]{Kaibo Hu\thanks{hukaibo02@gmail.com} }
    \author[2]{Yicong Ma\thanks{yxm147@psu.edu} }
    \author[2]{Jinchao Xu\thanks{xu@math.psu.edu} }
    \affil[1]{Beijing
    International Center for Mathematical Research , Peking
    University, Beijing 100871, P. R. China}
    \affil[2]{Department of Mathematics,The Pennsylvania State
    University, University Park, PA 16802, USA}

\date{}

\maketitle

\begin{abstract}
This paper is devoted to the design and analysis of some
  structure-preserving finite element schemes for the
  magnetohydrodynamics (MHD) system.  The main feature of the method
  is that it naturally preserves the important Gauss law, namely
  {$\nabla\cdot\bm{B}=0$}.  In contrast to most existing
  approaches that eliminate the electrical field variable {$\bm{E}$} and
  give a direct discretization of the magnetic field, our new
  approach discretizes the electric field {$\bm{E}$} by
  N\'{e}d\'{e}lec type edge elements for {$H(\mathrm{curl})$},
  while the magnetic field {$\bm{B}$} by Raviart-Thomas
  type face elements for {$H(\mathrm{div})$}.  As a result, the
  divergence-free condition on the magnetic field holds exactly on the discrete
  level.

  For this new finite element method, an energy stability estimate can be
  naturally established in an analogous way as in the continuous case.
  Furthermore, well-posedness is rigorously established in the paper
  for both the Picard and Newton linearization of the fully nonlinear
  systems by using the Brezzi theory for both the continuous and
  discrete cases.  This well-posedness naturally leads to robust (and
  optimal) preconditioners for the linearized systems.\\
\smallskip
\noindent \textbf{Keywords.} {Divergence-free, MHD equations, Finite element}

\end{abstract}

\section{Introduction}
Magnetohydrodynamics (MHD) studies the interaction of electromagnetic
fields and conducting fluids. Applications of MHD on different scales
can be found in many disciplines such as astrophysics, engineering related
to liquid metal, and controlled thermonuclear fusion.  There is vast
literature devoted to various aspects of MHD.  In this work, we
focus on an incompressible MHD model, and discuss an energetically stable 
mixed finite element discretization that preserves the divergence-free condition 
for the magnetic field.

We model the interaction of a fluid, with fluid velocity denoted by $\bm{u}$, fluid pressure $p$, an electric field $\bm{E}$, and a magnetic field $\bm{B}$. Given a bounded domain {$\Omega\subset \mathbb{R}^3$}, we consider the
following dimensionless MHD model for {$\bm{x}\in \Omega$} and {$t>0$}:
\begin{dgroup}[compact]
\label{eq:dimensionless}
\begin{dmath}
\frac{\partial \boldsymbol{u}}{\partial t} 
+ ( \boldsymbol{u} \cdot \nabla) \boldsymbol{u}
- \frac{1}{Re} \Delta \boldsymbol{u}
- S \boldsymbol{j} \times \boldsymbol{B}
+ \nabla p
= \boldsymbol{f} , 
\label{mom1} 
\end{dmath}
\begin{dmath}
\boldsymbol{j}
- \frac{1}{Rm} \nabla \times \boldsymbol{B}
= \boldsymbol{0} ,
\label{curl_B1} 
\end{dmath}
\begin{dmath}
\frac{\partial \boldsymbol{B}}{\partial t}
+ \nabla \times \boldsymbol{E} = \boldsymbol{0} ,
\label{induction1}
\end{dmath}
\begin{dmath}
\nabla \cdot \boldsymbol{B} = 0,
\label{div_B_free1} 
\end{dmath}
\begin{dmath}
\nabla \cdot \boldsymbol{u} = 0,
\label{div_u_free1} 
\end{dmath}
\end{dgroup}
where
\begin{equation}
\label{ohm1}
\bm{j}= \bm{E}+\bm{u}\times \bm{B}.
\end{equation}
and the coefficients are the fluid Reynolds number {$Re$}, magnetic Reynolds number {$Rm$}, and coupling number {$S$}. The initial conditions for the fluid velocity, magnetic field are given for {$\bm{x}\in\Omega$}:
\begin{eqnarray}
\label{u0}
&\bm{u}(\bm{x},0) =  \bm{u}_{0}(\bm{x}),\\
\label{B0}
& \bm{B}(\bm{x},0) =  \bm{B}_{0}(\bm{x}),
\end{eqnarray}
and the boundary conditions are given for {$\bm{x}\in\partial \Omega$} and {$t>0$}
\begin{align}
\label{bdry-u}
& \bm{u} = \bm{0}, \\
\label{bdry-B}
& \bm{B}\cdot \bm{n} = 0,\\
\label{bdry-E}
& \bm{E}\times \bm{n} = \bm{0}.
\end{align}
For the sake of simplicity, we assume that all parameters are positive constants
(since it is straightforward to generalize the main results in the
paper to the variable coefficient case).  The primary unknown physical variables in the
model are the velocity of fluid {$\bm{u}$}, the pressure {$p$} and the
magnetic field {$\bm{B}$}.  These quantities, once known, uniquely
determine the electric field {$\bm{E}$} and volume current density
{$\bm{j}$}.

This model is a combination of the incompressible Navier-Stokes equations and reduced Maxwell's equations, see \cite{Davidson.P.2001a} for more discussion on the model. The motion of fluid and electro-magnetic field is coupled by Lorentz force in equation \eqref{mom1}.

One major focus of this paper is the preservation of the Gauss law of
magnetic field \eqref{div_B_free1} on the discrete level (to simplify exposition, 
this condition is referred to as the divergence-free
condition below). The divergence-free condition
is a precise physical law in electro-magnetics, which plays a critical
role in the MHD system and its simulations. This condition means that
there is no source of the magnetic field in the domain. In other
words, it guarantees that no magnetic monopole exists. It is easy to see that 
{$\nabla\cdot \bm{B}=0$} is implied from \eqref{induction1}, provided
that the initial value {$\bm{B}_0$} in \eqref{B0} is
divergence-free.

Therefore, a basic assumption in this paper is that the initial data in \eqref{B0} satisfies
\begin{equation}
  \label{B0div0}
\nabla\cdot\bm{B}_0=0,\quad x\in\Omega. 
\end{equation}
Based on the previous argument, the divergence-free condition is automatically satisfied 
on the continuous level. However, this condition may no longer be true on the discrete level, 
if no special care is taken. The importance of the divergence-free condition on
discrete level in MHD simulations has been rigorous analyzed, for example, in
\cite{Brackbill.J;Barnes.D.1980a,Brackbill.J.1985a}. It has been
observed that small perturbations to this condition
can cause huge errors in numerical simulations of MHD 
\cite{Brackbill.J;Barnes.D.1980a,Brackbill.J.1985a,Evans.C;Hawley.J.1988a,Dai.W;Woodward.P.1998b,Toth.G.2000a}. A famous conclusion drawn by 
Brackbill and Barnes is that violation of the divergence-free condition 
on the discrete level will introduce a strong non-physical force 
\cite{Brackbill.J;Barnes.D.1980a}. This results in a significant
error in numerical simulation \cite{Dai.W;Woodward.P.1998b}.  In view
of energy conservation, violation of this condition leads to a
non-conservative energy integral \cite{Brackbill.J;Barnes.D.1980a},
$$
{ 1 \over 2 } \frac{d}{d t} \left( \|\bm{u}\|^{2} +
\frac{S}{Rm} \|\bm{B}\|^{2} \right) + \frac{1}{Re} \|\nabla \bm{u}\|^{2}
+ S \|\bm{j}\|^{2}
=(\bm{f}, \bm{u})-(\bm{B}\cdot \bm{u}, \nabla\cdot \bm{B}),
$$
and the error is proportional to the divergence of magnetic field. 
In contrast, the possible nonzero {$\nabla\cdot\bm{u}$} does not affect energy conservation, as 
long as the convection term is dealt with
appropriately.

Preserving the divergence-free condition on the discrete
level is a topic that has been extensively studied in literature. There are many
approaches to achieving this goal for various forms of MHD models
(such as ideal MHD, Hall MHD, resistive MHD), which can be classified mainly as divergence-cleaning methods, constrained transport methods, divergence-free bases and the like. An in-depth review on these methods can be found in \cite{Toth.G.2000a}.

The {\it Potential-based method} is widely used in simulations of MHD
system. One either introduces a vector potential to write {$\bm{B} = \nabla \times \bm{A}$} \cite{Jackson.J.1975a,Fautrelle.Y.1981a,Clarke.D;Norman.M;Burns.J.1986a,Shadid.J;Pawlowski.R;Banks.J;Chacon.L;Lin.P;Tuminaro.R.2010a,Jardin.S.2010a,Pekmen.B;Tezer-Sezgin.M.2013a,Hiptmair.R;Heumann.H;Mishra.S;Pagliantini.C.2014a}, or a scalar
potential to {$\bm{B}=-\nabla\psi$} and solves {$\Delta \psi=0$}
\cite{Bandaru.V;Boeck.T;Krasnov.D;Schumacher.J.1999a}.  For other
variations of this approach, refer \cite{Conraths.H.1996a}.

The {\it Lagrange multiplier method}, or `augmented' method, is also a popular approach. An additional term $\nabla r$ is introduced into to the induction equation, analogous to the pressure term in fluid momentum equation. In this approach, {$\bm{B}$} is weakly divergence-free \cite{Jiang.B;Wu.J;Povinelli.L.1996a,Demkowicz.L;Vardapetyan.L.1998a,Vardapetyan.L;Demkowicz.L.1999a,Schotzau.D.2004a,Salah.N;Soulaimani.A.1999a,Salah.N;Soulaimani.A;Habashi.W.2001a,Bris.C;Lelievre.T.2006a,Codina.R;Hernandez.N.2011a} (and reference therein).

The {\it Divergence-cleaning method} is another common strategy. The
central idea is to project the intermediate numerical solution 
{$\hat{\bm{B}}$ } to a divergence-free subspace by a linear
operator. One such method was first used by Brackbill and
Barnes in \cite{Brackbill.J;Barnes.D.1980a}, which is also referred
to as projection method. Later it was used in combination with
finite volume method \cite{Toth.G.2000a,Balsara.D;Kim.J.2004a}. Another method is the hyperbolic
divergence-cleaning method, which corrects the
divergence error by solving a hyperbolic equation 
\cite{Dedner.A;Kemm.F;Kroner.D;Munz.C;Schnitzer.T;Wessenberg.M.2002a}.
%

The {\it Constrained transport method} was first introduced by Evans and
Hawley \cite{Evans.C;Hawley.J.1988a} for the ideal MHD equations. It is
based on the Yee scheme \cite{K.S.Yee.1966a} for Maxwell's
equation. The motivation is to mimic the analytic fact that {$\mathrm{div}~ \mathrm{curl}~ \bm{u} = 0$} for arbitrary 
{$\bm{u} \in H(\mathrm{curl})$} on discrete level. For MHD systems, this
method is further developed by DeVore \cite{C.R.DeVore.1991a}, Dai et al. \cite{Dai.W;Woodward.P.1998a}, Ryu et al. \cite{Ryu.D;Miniati.F;Jones.T;Frank.A.1998a}, Liu et al. \cite{Liu.J;Wang.W.2001a}, Balsara et
al. \cite{Balsara.D;Spicer.D.1999a,Balsara.D.2004a}, Fey et al. \cite{Fey.M;Torrilhon.M.2003a},Londrillo et al. \cite{Londrillo.P;Zanna.L.2004a}, Rossmanith \cite{Rossmanith.J.2006a}, Helzel et al. \cite{Helzel.C;Rossmanith.J;Taetz.B.2011a}. A comparison between divergence cleaning and this method is provided in \cite{Balsara.D;Kim.J.2004a}.

{\it Divergence-free bases} are another means of satisfying the
divergence-free condition. The variables are discretized by divergence-free basis 
\cite{Ye.X;Hall.C.1997a,Zhang.S.2012a} as in the Stokes (Navier-Stokes) equation, and
\cite{Cai.W;Wu.J;Xin.J.2013a} for MHD equation. This idea can be also
used in combination with a discontinuous Galerkin (DG) method 
\cite{Cockburn.B;Li.F;Shu.C.2004a,Li.F;Shu.C.2005a,Li.F;Xu.L.2012a,Yakovlev.S;Xu.L;Li.F.2013a}.

{\it Other methods} are also used to preserve the divergence-free
condition. For example, the 8-wave formulation of MHD equations \cite{Powell.K.1997a} or 
methods relying on the original mathematical structure of the equations
\cite{Gunzburger.M;Meir.A;Peterson.J.1991a}. More techniques used on the conservation law can be found in
\cite{Toth.G.2000a}.

The discretization we adopt in this paper is based on finite
element methods.  There has been a lot of research on finite element
methods for MHD systems, for example, \cite{Gunzburger.M;Meir.A;Peterson.J.1991a,Armero.F;Simo.J.1996a,Wiedmer.M.2000a,Gerbeau.J;Lelievre.T;Bris.C.2003a,Guermond.J;Minev.P.2003a,Schneebeli.A;Schotzau.D.2003a,Hasler.U;Schneebeli.A;Schotzau.D.2004a,Schotzau.D.2004a,Houston.P;Schoetzau.D;Wei.X.2008a,Prohl.A.2008a,Aydn.S.2010a,Banas.L;Prohl.A.2010a,Shadid.J;Pawlowski.R;Banks.J;Chacon.L;Lin.P;Tuminaro.R.2010a,Badia.S;Codina.R;Planas.R.2013a}.  In view of the Sobolev spaces used for the magnetic field
variable {$\bm{B}$}, existing finite element methods can be roughly
classified into two different categories:  the first uses {$H(\mathrm{grad})$}, and the second
uses the {$H(\mathrm{curl})$} space.

For methods based on {$H^{1}(\Omega)$}, we refer to
\cite{Gunzburger.M;Meir.A;Peterson.J.1991a,Armero.F;Simo.J.1996a,Wiedmer.M.2000a,Gerbeau.J;Lelievre.T;Bris.C.2003a,Houston.P;Schoetzau.D;Wei.X.2008a,Aydn.S.2010a}.  Most of these
methods only preserve the divergence-free of {$\bm{B}$} in
a weak sense, for example, Sch\"{o}tau
\cite{Schotzau.D.2004a}, by adding a Lagrange multiplier {$r\in
  {H}^{1}(\Omega)$} to numerical formulation. The convergence of
some of these methods are not guaranteed on non-smooth
concave domains \cite{Ida.N;Bastos.J.1997a,Monk.P.2003a}. 
For discretization of {$H(\mathrm{curl})$} for 
{$\bm{B}$}, we refer to \cite{Guermond.J;Minev.P.2003a,Schneebeli.A;Schotzau.D.2003a,Hasler.U;Schneebeli.A;Schotzau.D.2004a,Schotzau.D.2004a,Prohl.A.2008a,Banas.L;Prohl.A.2010a,Badia.S;Codina.R;Planas.R.2013a}.  Again, these discretizations
only assure the divergence-free condition for {$\bm{B}$} in
a weak sense.

In view of the Sobolev spaces used for the magnetic field variable
{$\bm{B}$}, we use {$H(\mathrm{div})$} as a basis in the
finite element discretization scheme studied in our paper. To accomplish this,
we discretize the electric-field variable $\bm{E}$ in
{$H(\mathrm{curl})$}.  Similar to Maxwell equations, we
view the electric field {$\bm{E}$} as 1-form and magnetic field
{$\bm{B}$} as 2-form and discretize these two variables 
by the corresponding discrete 1-form and 2-form.  More specifically, we use a mixed finite
element formulation that discretizes {$\bm{E}$} in 
{$H(\mathrm{curl})$} by N\'{e}d\'{e}lec elements \cite{Nedelec.J.1980a,Nedelec.J.1986a}, and discretizes
{$\bm{B}$} in {$H(\mathrm{div})$} by Raviart-Thomas
elements \cite{Raviart.P;Thomas.J.1977a}.

In our new discretization of the MHD system, Faraday's law still
holds exactly on the discrete level and, as a result, the Gauss law 
is automatically satisfied.
Thanks to the exact preservation of both Faraday and Gauss laws, our
discrete finite element schemes also has many desirable mathematical
properties (such as energy estimates) and well-posedness
(existence, uniqueness, and stability) can be rigorously established by
using the classic theory of Brezzi \cite{Brezzi.F.1974a,Boffi.D;Brezzi.F;Fortin.M.2013a} for the mixed finite element
method.  

In our formulation and analysis, we make use of well-established
numerical techniques and relevant mathematical theories for solving
Maxwell equations that are based on discrete differential forms or
finite element exterior calculus, see work of Bossavit \cite{Bossavit.A.1998a,Bossavit.A.2005a}, 
Hiptmair \cite{Hiptmair.R.2002a}, 
Arnold et al. \cite{Arnold.D;Falk.R;Winther.R.2006a,Arnold.D;Falk.R;Winther.R.2010a}.
In particular, we employ a mixed finite element formulation for the
Hodge Laplacian in an abstract framework studied by Arnold, Falk and Winther in
\cite{Arnold.D;Falk.R;Winther.R.2006a,Arnold.D;Falk.R;Winther.R.2010a}.

The rest of the paper is organized as follows. Notations and
introduction to finite element spaces are given in \S
\ref{sec:preliminary}. A new variational formulation and energy
estimates (both continuous and discrete cases) are presented in \S
\ref{sec:variation}. Linearized discrete schemes based on Picard
and Newton iterations are analyzed in \S \ref{sec:linearization}. We close in \S \ref{sec:concluding} 
with some concluding remarks.

\section{Preliminaries}\label{sec:preliminary}
As mentioned above, we consider our MHD model in an open bounded domain
{$\Omega\subset \mathbb{R}^3$}.  We assume that {$\Omega$} has a Lipschitz
continuous boundary. We remark that {$\Omega$} is not assumed to be 
convex. For simplicity of exposition, we assume that {$\Omega$} is a
simply connected polygon in the rest of the paper.

\subsection{Sobolev spaces}
We briefly introduce notation for some standard Sobolev
spaces.  First, the {$L^{2}$} inner product and norm are denoted by 
{$(\cdot, \cdot)$} and {$\|\cdot\|$} respectively:
$$
(u,v):=\int_{\Omega}u\cdot v \mathrm{d}x,\quad
\|u\|:=\left(\int_{\Omega} \lvert u\rvert^2 \mathrm{d}x\right)^{1/2}. 
$$
With a slight abuse of notation, {$L^2(\Omega)$} will be used to denote
both the scalar and vector {$L^2$} spaces.  Given a linear operator {$D$}, we define:
$$
H(D,\Omega):=\{v\in L^2(\Omega), Dv\in L^2(\Omega)\}  
$$
and
$$
H_0(D,\Omega):=\{v\in H(D, \Omega), t_{D}v=0 \mbox{ on } \partial\Omega\}. 
$$
Here, $t_{D}$ is the trace operator:
$$
t_{D}v=
\left\{
  \begin{array}{cc}
    v & D=\mathrm{grad},\\
    v\times n & D=\mathrm{curl},\\
    v\cdot n & D=\mathrm{div}.
  \end{array}
\right.
$$
We note that {$L^2(\Omega)$} can be viewed as {$H(id,\Omega)$}. And we
often use the following notation:
$$
L^2_0(\Omega)=\left\{v\in L^2(\Omega):  \int_\Omega v=0 \right\}.
$$
When {$D=\mathrm{grad}$}, we often use the notation:
$$
H^1(\Omega)=H(\mathrm{grad}, \Omega), \quad
H^1_0(\Omega)=H_0(\mathrm{grad}, \Omega).
$$
We further define:
$$
H_0(D0,\Omega):=\{v\in H_0(D, \Omega), Dv=0\},
$$
in particular, 
$$
H_0(\mathrm{div}0,\Omega):=\{\bm{v}\in H_0(\mathrm{div}, \Omega), \nabla\cdot \bm{v}=0\}.
$$
We use the space {$L^{p}$} and {$H^{-1}$} with norms denoted by
$$
\|{v}\|_{0,p}= \left( \int_\Omega|v|^p \right)^{1/p},\quad \|{v}\|_{-1}=\sup_{{\phi}\in
  H^1_0(\Omega)}\frac{\langle v,\phi \rangle}{\|\nabla\phi\|} 
$$
and also the following space
$$
L^2([0,T]; H^{-1}(\Omega))= \left \{f:
\int_{0}^T\|f(t,\cdot)\|_{-1}^2dt<\infty \right \}. 
$$
We will make the following assumption for the data throughout the paper:
\begin{equation}
  \label{data}
\bm{u}^0, \bm{B}^0\in L^2(\Omega), \quad \bm{f}\in L^2([0,T]; H^{-1}(\Omega)).
\end{equation}

\subsection{Finite element spaces}
With the notation introduced in the previous sections, we use the
following Sobolev spaces for the physical variables:
$$
(\bm{u}, p)\in H_{0}^{1}(\Omega)^{3}\times L^2_0(\Omega)
\quad \mbox{ and } \quad
(\bm{E}, \bm{B})\in {H}_{0}(\mathrm{curl}; \Omega)\times
{H}_{0}(\mathrm{div}; \Omega).  
$$
We use familiar finite element spaces to discretize the above
variables as described below.

\subsubsection{Finite element spaces for {$(\bm{E}, \bm{B})\in
  {H}_{0}(\mathrm{curl}; \Omega)\times {H}_{0}(\mathrm{div};
  \Omega)$}.}
We use the well-studied finite element spaces, namely the N\'{e}d\'{e}lec
edge elements and Raviart-Thomas face elements (and their
generalizations) for {$H_{0}(\mathrm{curl}; \Omega)$} and
{$H_{0}(\mathrm{div}; \Omega)$} respectively.  There is now a unified
theory for these types of elements, see
\cite{Hiptmair.R.2002a,Arnold.D;Falk.R;Winther.R.2006a}.  These finite element spaces are
best described in terms of the discrete de Rham complex. 
Figure~\ref{exact-sequence} illustrates the exact sequences on both continuous
and discrete levels and, in Figure~\ref{deRham}, the degrees of
freedom (DOF) of one family of the finite elements of lowest
order.

\begin{figure}[ht!]
\begin{equation*}
\begin{CD}
H_0(\mathrm{grad})   @> {\mathrm{grad}} >> H_0(\mathrm{curl}) 
@>{\mathrm{curl}} >> H_0(\mathrm{div})  @> {\mathrm{div}} >> L_0^2  \\    
 @VV\Pi_h^{\mathrm{grad}} V @VV\Pi_h^{\mathrm{curl}} V @VV\Pi_h^{\mathrm{div}} V
@VV\Pi_h^0 V\\  
H^h_0(\mathrm{grad})  @>{\mathrm{grad}} >> H^h_0(\mathrm{curl})  @>{\mathrm{curl}} >> H^h_0(\mathrm{div}) @> {\mathrm{div}} >> L^{2,h}_0
\end{CD}
\end{equation*}
\caption{Continuous and discrete de Rham sequence}
\label{exact-sequence}
\end{figure}
\begin{figure}[ht!]
\begin{center}
\includegraphics[width=.8in]{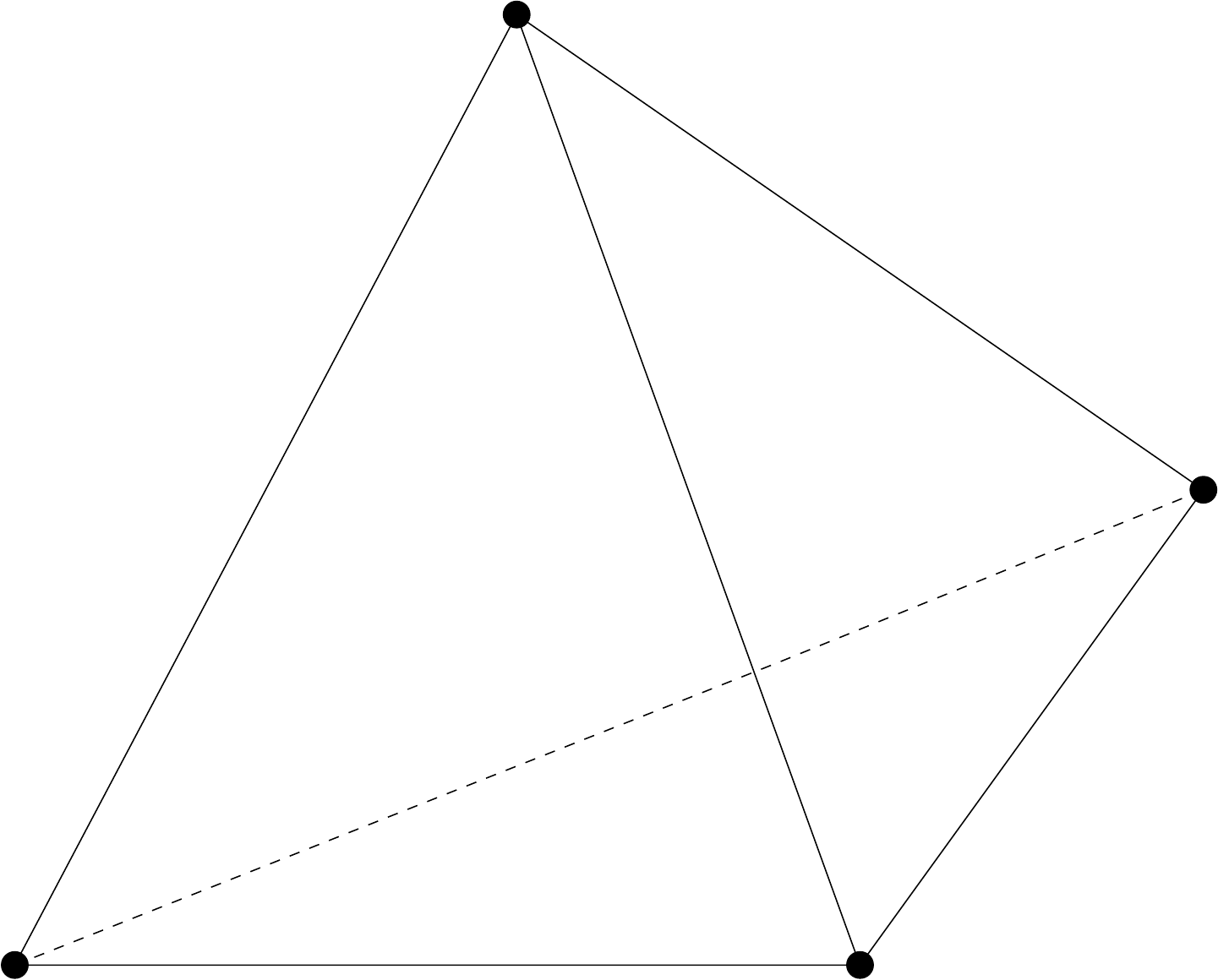}
\includegraphics[width=.8in]{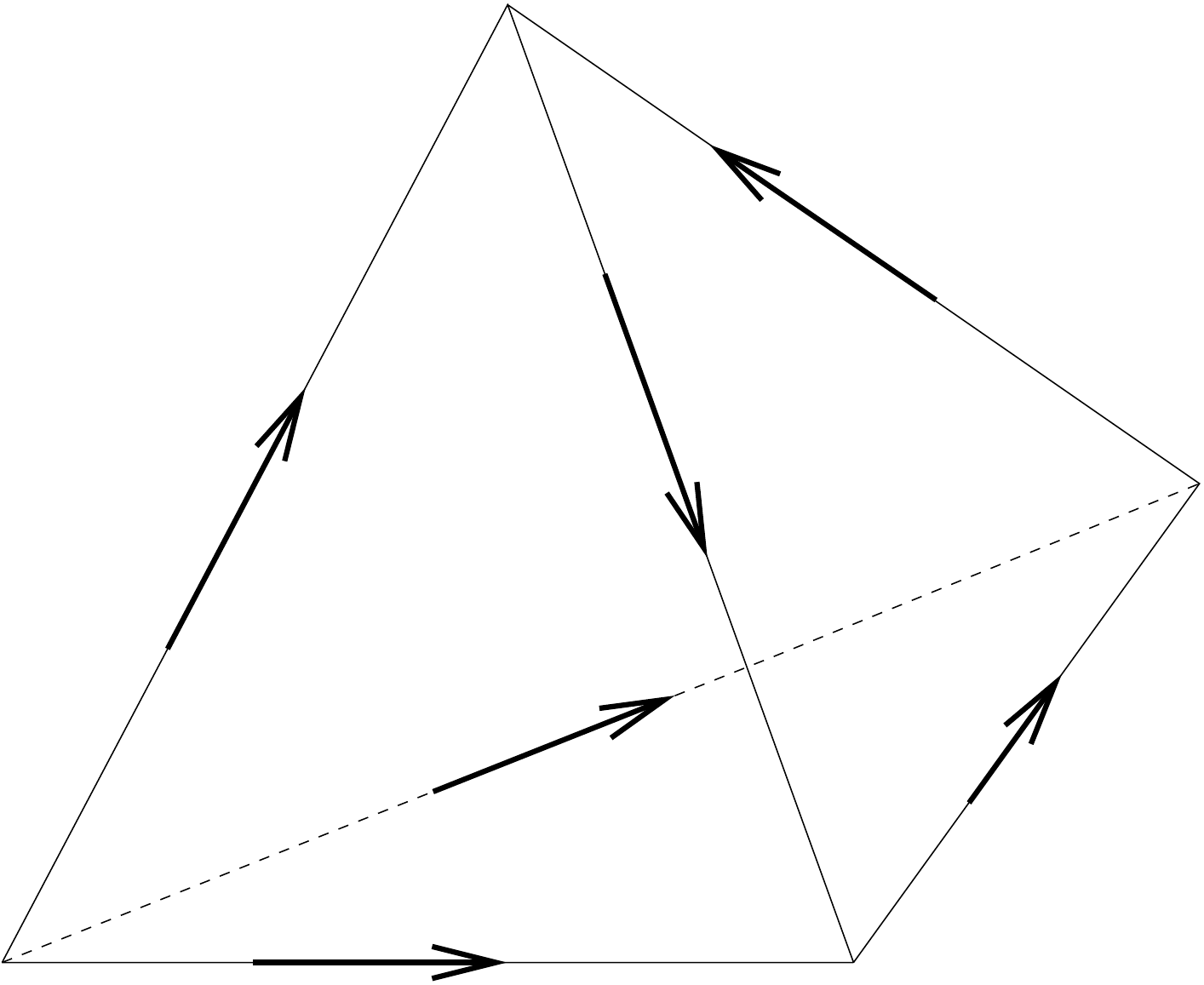}
\includegraphics[width=.8in]{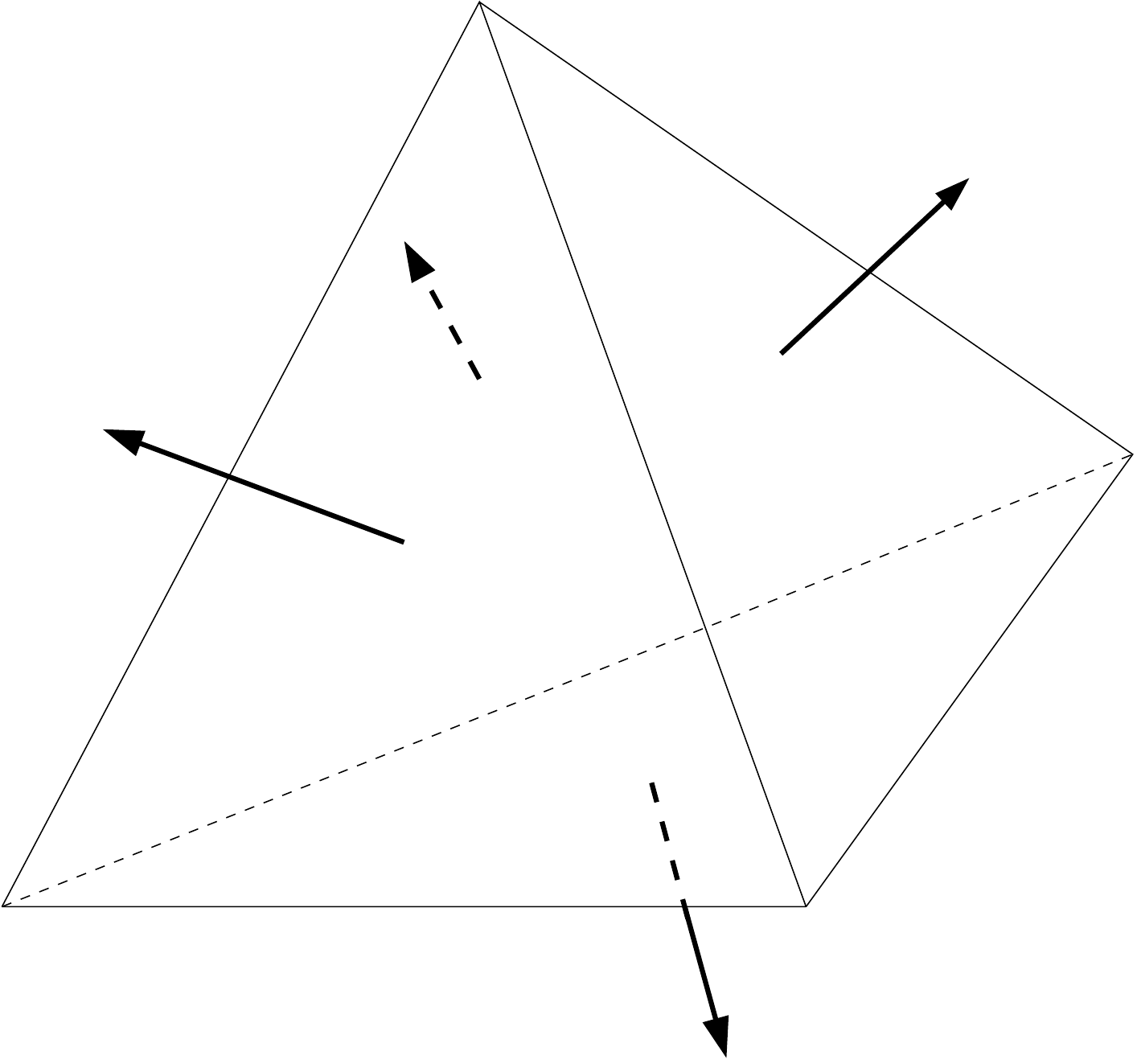}
\includegraphics[width=.8in]{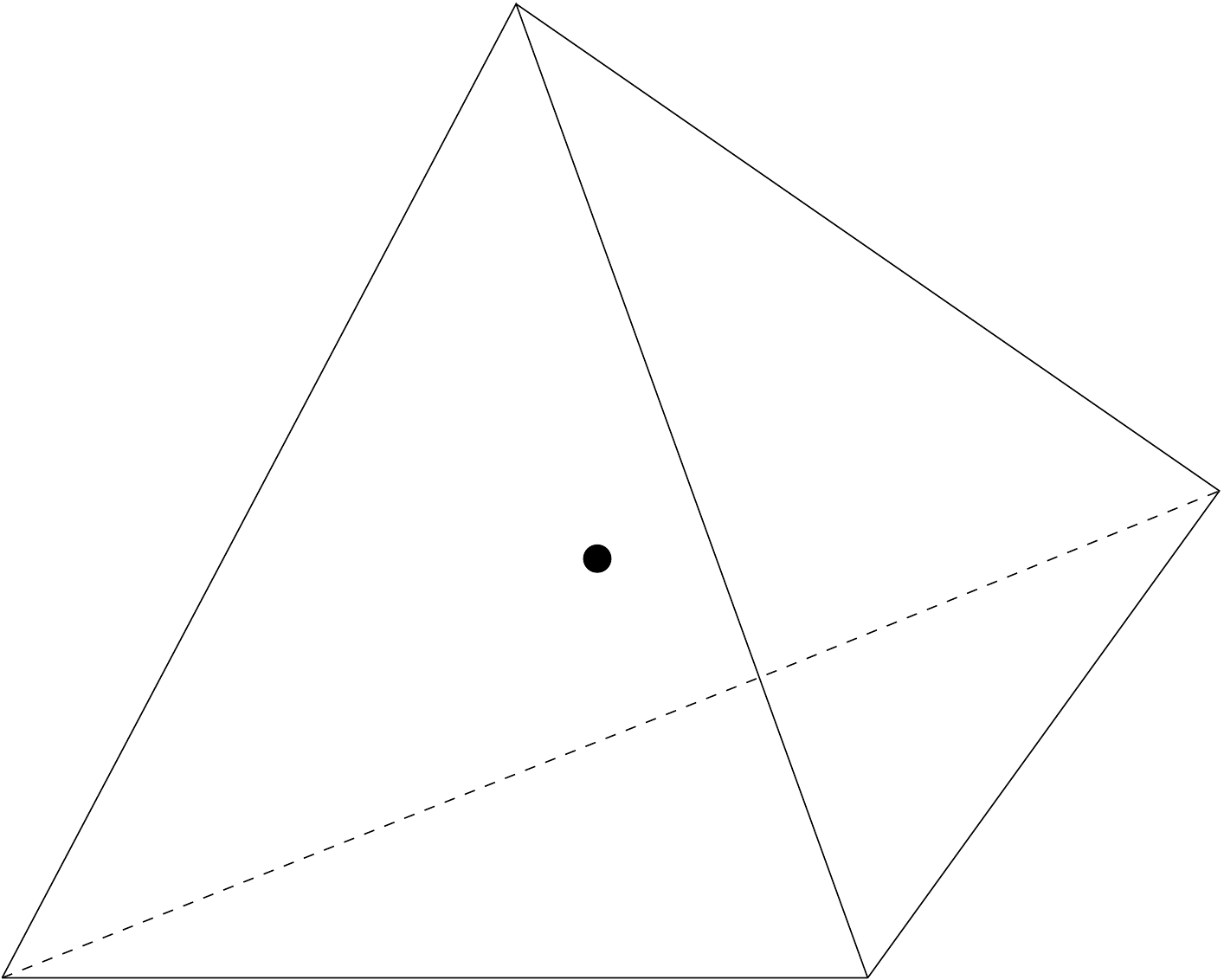}
\caption{DOF of finite element de Rham sequence of lowest order}
\label{deRham}
\end{center}
\end{figure}

We will use {$\bm{V}^{c}$} and {$\bm{V}^{d}$} to denote {$H_{0}(\mathrm{curl};\Omega)$} and {$H_{0}(\mathrm{div};\Omega)$}:
$$
\bm{V}^{c}:= H_0(\mathrm{curl}; \Omega),\quad   \bm{V}^{d} := H_0(\mathrm{div}; \Omega),
$$
and {$\bm{V}^{c}_{h}$}, {$\bm{V}_{h}^{d}$} for their finite element
subspaces as shown in Figure~\ref{exact-sequence}, namely
$$
\bm{V}^{c}_{h} \times \bm{V}^{d}_{h}:= H^h_0(\mathrm{curl}) \times
H^h_0(\mathrm{div})\subset \bm{V}^{c}\times \bm{V}^{d}.
$$

Because of the Gauss law \eqref{div_B_free1}, {$H_{0}(\mathrm{div},
\Omega)$} functions with vanishing divergence play an important role in
the analysis. Hence we define
$$
\bm{V}^{d,0}:= H_{0}(\mathrm{div}0, \Omega)=
\{\bm{C}\in \bm{V}^d: \nabla\cdot\bm{C}=0\}
$$
and the discrete space:
$$
\bm{V}^{d,0}_{h}:=H^h_0(\mathrm{div}0, \Omega)= \{\bm{C}\in \bm{V}^{d}_{h}: \nabla\cdot \bm{C}=0  \}.
$$

%

\subsubsection{Stable Stokes pairs for {$(\bm{u}, p)\in
  H_{0}^{1}(\Omega)^{3}\times\in L^2_0(\Omega)$} } 
We use {$\bm{V}_h$} to denote the finite element subspace of
{$H_{0}^{1}(\Omega)^{3}$} and {$Q_h$} for subspace of {$L_{0}^{2}(\Omega)$}.
The basic requirement of this pair of finite element spaces is that
they satisfy the following inf-sup conditions:
\begin{equation}
  \label{inf-sup}
\inf_{q_h\in  Q_h}\sup_{\bm{v}_h\in \bm{V}_h}\frac{(\nabla\cdot \bm{v}_h,q_h)}{\|\nabla \bm{v}_h\|\;\|q_h\|}\ge \beta>0,
\end{equation}
for some positive constant {$\beta$} that is independent of {$h$}. Many existing pairs of stable Stokes elements can be used, like
Taylor-Hood elements \cite{Girault.V;Raviart.P.1986a,Boffi.D;Brezzi.F;Fortin.M.2013a}. 

We define
\begin{equation}
  \label{spaceV}
\bm{V}= H_{0}^{1}(\Omega)^{3}, \quad Q=L_{0}^{2}(\Omega),  
\end{equation}

\begin{equation}
  \label{spaceV0}
\bm{V}^0=\{\bm{v}\in \bm{V}: (\nabla \cdot \bm{v},q)=0,\quad\forall
q\in L^{2}_{0}(\Omega)\}
\end{equation}
and 
\begin{equation}
  \label{spaceVh0}
\bm{V}_h^0=\{\bm{v}_h\in \bm{V}_h: (\nabla \cdot \bm{v}_h,q_h)=0,\quad\forall
q_h\in Q_h\}. 
\end{equation}

We note that 
\begin{equation}
  \label{spaceV0}
\bm{V}^0=\{\bm{v}\in \bm{V}: \nabla \cdot \bm{v}=0\},
\end{equation}
but on the discrete level
\begin{equation}
  \label{spaceVh0}
\bm{V}_h^0=\{\bm{v}_h\in \bm{V}_h: P_h\nabla \cdot \bm{v}_h=0\} 
\end{equation}
where {$P_h: L^2(\Omega)\mapsto Q_h$} is the projection, defined as follows:
\begin{equation}
  \label{L2projection}
(P_hq,\phi_h)=(q,\phi_h), \quad\forall q\in L^2(\Omega), \phi_h\in Q_h.  
\end{equation}
Namely {$\bm{V}_h^0\not\subset \bm{V}^0$} in general unless
{$\nabla\cdot\bm{V}_h\subset Q_h$}.  

\begin{remark}
Sometimes, we may use all four spaces in one of the
  exact sequences, for example, the second or third sequence in the four resolutions of {$\mathbb{R}^{3}$} to discrete all the four variables $(\bm{u},\bm{E},\bm{B}, p)$.

  But, in general, $\bm{V}_h$ is not necessary the same as the
  discrete $H^h_0(grad;\Omega)$ nor $Q_h$ is necessarily the same as
  the discrete space $L^2_{0,h}$ as shown in
  Figure~\ref{exact-sequence}.

  There is another interesting use of the spaces in
  Figure~\ref{exact-sequence}.  As studied in
  \cite{Cockburn.B;Li.F;Shu.C.2004a,Dios.B;Brezzi.F;Marini.L;Xu.J;Zikatanov.L.2014a} for a DG
  formulation. One could take the following Stokes pair from
  Figure~\ref{exact-sequence} as
$$
\bm{V}_h= H^h_0(\mathrm{div}) \mbox{ and } Q_h=L^2_{0,h}.
$$

But we will not discuss this choice in detail in this paper. 
\end{remark}


\subsection{On the boundary conditions for the MHD model}
The boundary conditions \eqref{bdry-u}-\eqref{bdry-E} call for some
explanations due to the fact that the MHD model only employs some, but not all, of 
Maxwell's equations.  The boundary condition
\eqref{bdry-u} for {$\bm{u}$} is standard for NS equation. For simplicity, the pure
Dirichlet boundary condition is considered here.  The boundary
conditions \eqref{bdry-B} and \eqref{bdry-E} are actually not 
independent.  More precisely, the boundary condition \eqref{bdry-E} implies 
\eqref{bdry-B} whenever the initial condition satisfies
$\bm{B}_{0}(x)\cdot \bm{n} = 0$, for any $x\in\Omega $. 
This is due to the relation $\mathrm{div}_{\partial \Omega}( \bm{E}\times \bm{n} )=(\nabla\times
  \bm{E})\cdot \bm{n}|_{\partial \Omega}$, see \cite{Monk.P.2003a,Boffi.D;Brezzi.F;Fortin.M.2013a}.

\section{Variational formulation and finite element
  discretizations}\label{sec:variation}
The first task in designing our new method is to introduce an
appropriate variational formulation for \eqref{mom1}-\eqref{div_u_free1} with given boundary conditions and initial data.

\subsection{Basic spaces and weighted norms}
We observe that it is convenient to group the variables
{$(\bm{u}, \bm{E},\bm{B})$} to form the following mixed pair of Sobolev
spaces:
\begin{equation}
\label{space}
\bm{X}:= \bm{V}\times \bm{V}^{c} \times
\bm{V}^{d} 
\quad \mbox{and} \quad Q=L_0^2(\Omega),
\end{equation}

and the corresponding finite element spaces:
\begin{equation}
  \label{space-FE}
\bm{X}_h:= \bm{V}_{h}\times \bm{V}_{h}^{c}  \times  \bm{V}_{h}^{d}
\quad \mbox{and} \quad Q_h.
\end{equation}

We also use the following subspaces
\begin{equation}
\bm{X}^{\bm{B},0}:= \bm{V}\times \bm{V}^{c}\times
\bm{V}^{d,0}, \quad
 \bm{X}^{\bm{u},0}:= \bm{V}^0\times \bm{V}^{c}\times
\bm{V}^{d},
\end{equation}
and the corresponding finite element spaces
\begin{equation}
\bm{X}^{\bm{B},0}_h:=  \bm{V}_h\times \bm{V}_{h}^{c} \times
\bm{V}_{h}^{d,0}, \quad
\bm{X}^{\bm{u}, 0}_h:= \bm{V}_h^0\times \bm{V}_{h}^{c}\times
\bm{V}_{h}^{d}.
\end{equation}

For a unified presentation for both the continuous and discrete
formulations, we also use the same notation {$\bm{X}$}, {$Q$} and
{$\bm{X}^{\bm{u},0}$ }, {$\bm{X}^{\bm{B},0}$} to denote the corresponding finite element
spaces:
\begin{equation}
  \label{space-h}
\bm{X}=\bm{X}_{h} \quad \mbox{and} \quad Q=Q_h,
\end{equation} 
and 
\begin{equation}
  \label{space0-h}
\bm{X}^{\bm{u},0} =\bm{X}^{\bm{u},0}_h 
\quad \mbox{and} \quad 
\bm{X}^{\bm{B},0}=\bm{X}^{\bm{B},0}_h. 
\end{equation}

Let {$k$} be a positive number that denotes the time-step size. 
We introduce the following weighted Sobolev norms for both the
continuous Sobolev spaces and the corresponding finite element spaces:
for any
{$(\bm{E},\bm{B})\in \bm{V}^{c}\times \bm{V}^{d}$}, we define
$$
\nmcurl{\bm{E}}^{2}:=\|\bm{E}\|^{2}+k \|\nabla\times \bm{E} \|^{2}
\quad \mbox{and} \quad
\nmdiv{\bm{B}}^{2}:={k^{-1}}\|\bm{B}\|^{2}+\|\nabla\cdot \bm{B}\|^{2}.
$$

For any {$(\bm{u},p)\in \bm{V}\times Q$}, we define 
$$
\nmgrad{\bm{u}}^{2}:={k^{-1}}\|\bm{u}\|^{2}+ \|\nabla \bm{u}
\|^{2}+{k^{-1}}\|\mathbb{P}\nabla\cdot \bm{u}\|^{2} 
\quad \mbox{and} \quad
\nmzero{p}^{2}:=k \|p\|^{2}.
$$

When {$\bm{u}\in \bm{V}_h$}, {$\mathbb{P}={P}_h$} is the {$L^2$}-projection
defined by \eqref{L2projection}.
Otherwise, we take {$\mathbb{P}$} to be the identity operator. 
 
The norms of {$\bm{X}$} and {$Q$} are defined in a standard way:
$$
\|(\bm{v},\bm{F}, \bm{C})\|_{\bm{X}}^{2}:= \|\bm{v}\|_{1,k}^{2}+\|\bm{F}\|_{\mathrm{curl},k}^{2}+\|\bm{C}\|_{\mathrm{div},k}^{2}, \quad
 \|q\|_{Q}=\|q\|_{0,k}.
$$

Correspondingly, the term {$k^{-1}\|\mathbb{P}\nabla
\cdot \bm{v}\|$} vanishes in the norm of {$\bm{X}^{\bm{u},0}$}; 
and in norm of {$\bm{X}^{\bm{B},0}$}, we have 
{$\|\nabla\cdot \bm{B}\|=0$}. Dual norms are defined as
$$
\|\bm{h}\|_{\bm{X}^{\ast}}:= \sup_{\bm{\eta}\in \bm{X}}\frac{\langle
  \bm{h} , \bm{\eta}\rangle}{\|\bm{\eta}\|_{\bm{X}}}, \quad
\|g\|_{Q^{\ast}}:= \sup_{q\in Q}\frac{ \langle g, q \rangle
}{\|q\|_{Q}}.
$$

\subsection{Variational formulation and finite element discretizations}
There are many different possible variational formulations for the MHD
models.  A prominent feature of the formulation used in this work is that 
the electric field is kept, while the divergence-free condition on the magnetic 
field \eqref{div_B_free1} is not explicitly enforced. In many existing
discretizations, the electric field is
eliminated, and a Lagrangian multiplier is
introduced to preserve the divergence-free condition of {$\bm{B}$} (in
a weak sense).  By maintaining the electric field {$\bm E$} as an independent
variable, the divergence-free condition
\eqref{div_B_free1} is satisfied naturally and precisely on both
the continuous and discrete levels.

We have four independent physical variables in our formulation,
namely, the fluid velocity $\bm{u}$, the fluid pressure $p$, the electric field 
$\bm{E}$, and the magnetic field $\bm{B}$.

The variational formulation used for both continuous
\eqref{space} and discrete levels \eqref{space-h} is as follow.
\begin{problem}\label{prob:semi}
Find {$(\bm{u}, \bm{E}, \bm{B})\in \bm{X}$} and {$ p\in Q$} such that for
any {$(\bm{v}, \bm{F}, \bm{C})\in \bm{X} $} and {$q\in Q$},
\begin{align}
\nonumber
&  \left( \frac{\partial \bm{u}}{\partial t},\bm{v}  \right) +
  \frac{1}{2} \left[ \left( \bm{u}\cdot \nabla \bm{u}, \bm{v} \right) - ( \bm{u}\cdot
  \nabla \bm{v}, \bm{u}) \right]  
  + \frac{1}{Re} (\nabla \bm{u}, \nabla \bm{v}) \\ \label{mom2}
& \quad \quad
  - S (\bm{j}\times \bm{B},\bm{v} ) - (p,\nabla\cdot \bm{v}) 
 = (\bm{f},\bm{v}),\\
\label{curl_B2} 
& (\bm{j},\bm{F}) - \frac{1}{Rm} ( \bm{B}, \nabla\times \bm{F}) =0, \\
\label{induction2}
& \left( \frac{\partial \bm{B}}{\partial t}, \bm{C} \right) 
+ (\nabla\times \bm{E}, \bm{C}) = 0, \\ 
\label{div_u_free2} 
& (\nabla\cdot \bm{u}, q) =0,
\end{align}
where {$\bm{j}$} is given by Ohm's law: 
{$\bm{j} = \bm{E} + \bm{u}\times \bm{B} $}.
\end{problem}

We use the implicit Euler scheme to discrete the time variable in
Problem \ref{prob:semi} and to obtain the following approximation
of the MHD model:
\begin{problem}\label{prob:nonlinear}  
Given {$(\bm{u}^0, \bm{B}^0)$}, for {$n=1,2,3,\ldots$}, find
  {$(\bm{u}^n, \bm{E}^{n}, \bm{B}^n)\in \bm{X}$} and {$ p^n\in Q$} such
  that for any {$(\bm{v}, \bm{F}, \bm{C})\in X $} and {$q\in Q$},
\begin{dgroup*}
\begin{dmath}
\left( \frac{\bm{u}^{n}-\bm{u}^{n-1}}{k},\bm{v} \right) 
+  {1 \over 2}[( \bm{u}^{n}\cdot \nabla \bm{u}^{n},\bm{v})
- ( \bm{u}^{n}\cdot \nabla\bm{v} , \bm{u}^{n})]
+ \frac{1}{Re} (\nabla \bm{u}^{n}, \nabla \bm{v})
- S (\bm{j}^{n}\times \bm{B}^{n},\bm{v} )
- (p^{n},\nabla\cdot \bm{v}) = (\bm{f}^{n},\bm{v}),
\label{nonlinear1} 
\end{dmath}
\begin{dmath}
(\bm{j}^{n},\bm{F}) - \frac{1}{Rm} ( \bm{B}^{n}, \nabla\times \bm{F}) =0, 
\label{nonlinear2} 
\end{dmath}
\begin{dmath}
\left( \frac{\bm{B}^{n}-\bm{B}^{n-1}}{k}, \bm{C} \right) 
+ (\nabla\times \bm{E}^{n}, \bm{C}) = 0, 
\label{nonlinear3}
\end{dmath}
\begin{dmath}
\left( \nabla\cdot \bm{u}^{n}, q \right) = 0, \label{nonlinear4} 
\end{dmath}
\end{dgroup*}
where 
{$\bm{j}^{n} = \bm{E}^{n}+\bm{u}^{n}\times \bm{B}^{n} $}.
\end{problem}

We note that the special treatment of the nonlinear convection term in
\eqref{mom2} and \eqref{nonlinear1} is based on the following identity that holds for
{$\nabla\cdot\bm{u}=0$},
\begin{equation}\label{modified-convection}
( \bm{u}\cdot \nabla \bm{u}, \bm{v})
=\frac{1}{2}[( \bm{u}\cdot \nabla \bm{u}, \bm{v}) 
- ( \bm{u} \cdot \nabla
\bm{v}, \bm{u})],\quad\forall \bm{v}\in V.
\end{equation}
This is a classical stabilization technique (c.f. \cite{Schotzau.D.2004a})
when {$\bm{X}\times Q$} is given by \eqref{space-h} and
{$\nabla\cdot \bm{V}\not\subset Q$}. 

\begin{remark} We note that 
  \begin{enumerate}
  \item When {$\bm{X}\times Q$} is given by \eqref{space}, Problem
    \ref{prob:semi} gives a variational formulation of the MHD
    model \eqref{mom1}-\eqref{div_u_free1} and Problem
    \ref{prob:nonlinear} gives a semi-discretization (in time) for the
    MHD model \eqref{mom1}-\eqref{div_u_free1}.

  \item When {$\bm{X}\times Q$} is given by \eqref{space-h}, Problem
    \ref{prob:semi} gives a semi-discretization (in space)
   and Problem \ref{prob:nonlinear} gives a
    full discretization for Problem~\ref{prob:semi}.
  \end{enumerate}
\end{remark}

When considering the spatially discrete problem given by \eqref{space-h}, 
we need to specify {$(\bm{u}^0, \bm{B}^0)\in \bm{X}_h$}. Naturally, the choice for 
{$(\bm{u}^0, \bm{B}^0)\in \bm{X}_h$} should be a good approximation of the
continuous initial date as given in \eqref{u0}
and \eqref{B0}.  While the choice of {$(\bm{u}^0, \bm{B}^0)\in
\bm{X}_h$} is not unique, we require that the following condition be
satisfied:
\begin{equation}
  \label{B0hdiv0}
\nabla\cdot\bm{B}^0=0,\quad \bm{x}\in\Omega. 
\end{equation}

One way to assure this condition is to use the interpolation operator
{$\Pi_h^{\rm div}$} as shown in Figure \ref{exact-sequence}: 
$\bm{B}^0= \Pi_h^{\rm div}\bm{B}_0$.
Thanks to the commutative diagram illustrated in Figure
\ref{exact-sequence}, {$\bm{B}^0$} satisfies
\eqref{B0hdiv0} because of \eqref{B0div0} and 
$$
\nabla\cdot\bm{B}^0= \nabla\cdot (\Pi_h^{\rm div}\bm{B}_0)=
 \Pi_h^{0} (\nabla\cdot\bm{B}_0)=0. 
$$

\begin{theorem} 
Assume that {$\bm{X}\times Q$} is given by
  \eqref{space-h}.  At each time step {$n$}, any solution {$(\bm{u}^n,
  \bm{E}^{n}, \bm{B}^n)\in \bm{X}$} of the fully-discrete finite
  element scheme Problem~\ref{prob:nonlinear} satisfies the Gauss law
  exactly: 
$$
\nabla\cdot \bm{B}^n=0. 
$$
\end{theorem}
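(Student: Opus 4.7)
The proof will proceed by induction on the time step $n$, with the base case being the assumption \eqref{B0hdiv0} that $\nabla\cdot\bm{B}^0=0$. The entire argument hinges on two structural facts: first, the discrete de Rham sequence in Figure~\ref{exact-sequence} gives $\nabla\times\bm{V}_h^c\subset\bm{V}_h^d$, and second, the identity $\nabla\cdot(\nabla\times\cdot)=0$ holds pointwise (not merely weakly) even on the discrete spaces, because $\bm{V}_h^c\subset H_0(\mathrm{curl})$ and $\bm{V}_h^d\subset H_0(\mathrm{div})$ as actual subspaces.

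The plan for the inductive step is to extract a strong (pointwise) form of the discrete Faraday law \eqref{nonlinear3}. Assuming $\nabla\cdot\bm{B}^{n-1}=0$, I consider the residual
\[
\bm{W}^n := \bm{B}^n - \bm{B}^{n-1} + k\,\nabla\times\bm{E}^n.
\]
Because $\bm{B}^n,\bm{B}^{n-1}\in\bm{V}_h^d$ and $\bm{E}^n\in\bm{V}_h^c$, the discrete inclusion $\nabla\times\bm{V}_h^c\subset\bm{V}_h^d$ (i.e., the commuting property of the sequence in Figure~\ref{exact-sequence}) guarantees that $\bm{W}^n$ itself lies in $\bm{V}_h^d$. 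Equation \eqref{nonlinear3} says exactly that $(\bm{W}^n,\bm{C})=0$ for every $\bm{C}\in\bm{V}_h^d$, so testing with $\bm{C}=\bm{W}^n$ gives $\|\bm{W}^n\|^2=0$, hence $\bm{W}^n=0$ pointwise.

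With the discrete Faraday law promoted to the pointwise identity $\bm{B}^n = \bm{B}^{n-1} - k\,\nabla\times\bm{E}^n$, I apply the divergence operator on both sides in $L^2(\Omega)$ to obtain
\[
\nabla\cdot\bm{B}^n = \nabla\cdot\bm{B}^{n-1} - k\,\nabla\cdot(\nabla\times\bm{E}^n) = \nabla\cdot\bm{B}^{n-1} = 0,
\]
using $\nabla\cdot(\nabla\times\bm{E}^n)=0$ and the inductive hypothesis. This closes the induction.

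The only subtle point, and what I would flag as the key lemma to invoke rather than an obstacle per se, is that the conclusion $\bm{W}^n=0$ requires $\bm{W}^n\in\bm{V}_h^d$, which in turn requires $\nabla\times\bm{E}^n\in\bm{V}_h^d$; this is precisely the de Rham property of the N\'ed\'elec--Raviart-Thomas pair and is what distinguishes this scheme from formulations that discretize $\bm{B}$ in $H^1$ or $H(\mathrm{curl})$, where such a pointwise divergence-free property would fail. Once this structural ingredient is in place, the rest of the proof is a one-line application of \eqref{nonlinear3}, a one-line application of the pointwise vector calculus identity, and the induction.
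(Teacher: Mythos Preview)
Your proof is correct and follows essentially the same approach as the paper: derive the pointwise Faraday identity $\bm{B}^n-\bm{B}^{n-1}+k\nabla\times\bm{E}^n=0$ from \eqref{nonlinear3}, take divergence, and induct using \eqref{B0hdiv0}. You have made explicit the key justification---that $\nabla\times\bm{V}_h^c\subset\bm{V}_h^d$ allows testing \eqref{nonlinear3} against the residual itself---which the paper's proof leaves implicit.
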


\begin{proof}
By \eqref{nonlinear3}, we have 
$$
\frac{\bm{B}^n-\bm{B}^{n-1}}{k}+\nabla\times\bm{E}^n=0.
$$
Taking divergence on both sides, we have 
$$
\nabla\cdot(\bm{B}^n-\bm{B}^{n-1})=0.
$$
The desired result then follows by induction and \eqref{B0hdiv0}.
\end{proof}

\subsection{Energy estimates}
Next, we establish some energy estimates for both continuous and
discrete MHD systems. An energy estimate often refers to an a
prior estimate for the solution to a system of partial differential equations for
many physical systems including MHD.  In view of physical properties,
it shows the conservation or decay of the total energy of the
physical system.  For mathematical qualitative analysis, it provides
crucial technical tools for understanding stability and well-posedness
of the underlying PDE.  For numerical analysis, it gives us
insight and guidance to the design of appropriate discretization
schemes that inherit the energy estimate from the continuous level.  More
importantly, the technical process of deriving the energy estimate 
also provides clues about the relationships between different physical quantities.

\begin{theorem} \label{thm:energy0}
For any {$(\bm{u}, \bm{B}, \bm{E})\in \bm{X}$} and {$p\in
  Q$} that satisfy \eqref{mom2}-\eqref{div_u_free2}, the following
  energy estimates hold
\begin{equation}
\label{energy-identity}
{1 \over 2}\frac{d}{dt}\|\bm{u}\|^2 + \frac{S}{2 Rm}\frac{d}{dt}\|\bm{B}\|^2
+ \frac{1}{Re} \|\nabla \bm{u}\|^2  + S \|\bm{j}\|^{2} 
= (\bm{f}, \bm{u}). 
\end{equation}
and
\begin{align*}
   & \max_{0\leq t\leq T} \left(
    \|\bm{u}\|^2 + \frac{S}{Rm} \|\bm{B}\|^2 \right) 
    + \frac{1}{Re} \int_{0}^{T} \|\nabla
    \bm{u}\|^2\mathrm{d}\tau + 2 S \int_{0}^{T}
    \|\bm{j}\|^2 \mathrm{d}\tau \\
    \leq & ~ \|\bm{u}_{0}\|_{L^{2}}^{2} + \frac{S}{Rm} \|\bm{B}_{0}\|_{L^{2}}^{2} + 
    {Re} \int_0^T\|\bm{f}\|_{-1}^{2}\mathrm{d}\tau
\end{align*}
where {$\bm{u}_{0}$} and {$\bm{B}_{0}$} are the given initial data. 
\end{theorem}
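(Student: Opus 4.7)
The plan is to test the four variational equations with carefully chosen arguments, exploit the antisymmetric structure that was built into \eqref{mom2} and the coupling between Ohm's law and Faraday's law, and read off the energy identity directly; the second estimate then follows from Young's inequality and integration in time. Specifically, I would take $\bm{v}=\bm{u}$ in \eqref{mom2}, $\bm{F}=S\bm{E}$ in \eqref{curl_B2}, $\bm{C}=\tfrac{S}{Rm}\bm{B}$ in \eqref{induction2}, and $q=p$ in \eqref{div_u_free2}. Note that all four choices are admissible in both the continuous and the discrete settings because each test function lies in the same space as the corresponding unknown.

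The first line simplifies immediately: the skew-symmetric convection term vanishes by \eqref{modified-convection} applied with $\bm{v}=\bm{u}$, while $(p,\nabla\cdot\bm{u})$ is killed by \eqref{div_u_free2} with $q=p$. So \eqref{mom2} reduces to
\begin{equation*}
\tfrac{1}{2}\tfrac{d}{dt}\|\bm{u}\|^{2}+\tfrac{1}{Re}\|\nabla\bm{u}\|^{2}-S(\bm{j}\times\bm{B},\bm{u})=(\bm{f},\bm{u}).
\end{equation*}
Equation \eqref{induction2} with $\bm{C}=\tfrac{S}{Rm}\bm{B}$ produces $\tfrac{S}{2Rm}\tfrac{d}{dt}\|\bm{B}\|^{2}+\tfrac{S}{Rm}(\nabla\times\bm{E},\bm{B})=0$. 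The key algebraic step is in \eqref{curl_B2} with $\bm{F}=S\bm{E}$: substituting Ohm's law $\bm{E}=\bm{j}-\bm{u}\times\bm{B}$ into $(\bm{j},S\bm{E})$ and using the triple-product identity $(\bm{j},\bm{u}\times\bm{B})=-(\bm{j}\times\bm{B},\bm{u})$ yields
\begin{equation*}
S\|\bm{j}\|^{2}+S(\bm{j}\times\bm{B},\bm{u})=\tfrac{S}{Rm}(\bm{B},\nabla\times\bm{E}).
\end{equation*}
Adding the three relations, the Lorentz terms $\pm S(\bm{j}\times\bm{B},\bm{u})$ and the electromagnetic coupling terms $\pm\tfrac{S}{Rm}(\bm{B},\nabla\times\bm{E})$ cancel in pairs, leaving precisely the identity \eqref{energy-identity}.

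For the a priori bound, I would handle the forcing term with Young's inequality, $(\bm{f},\bm{u})\le\|\bm{f}\|_{-1}\|\nabla\bm{u}\|\le\tfrac{Re}{2}\|\bm{f}\|_{-1}^{2}+\tfrac{1}{2Re}\|\nabla\bm{u}\|^{2}$, absorb the gradient term into the dissipation on the left, integrate \eqref{energy-identity} from $0$ to $t$, and multiply by $2$. Taking the supremum over $t\in[0,T]$ yields the stated maximum-in-time/time-integral bound, with the $\bm{u}_{0},\bm{B}_{0}$ contributions coming from the fundamental theorem of calculus applied to $\|\bm{u}\|^{2}$ and $\|\bm{B}\|^{2}$.

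I do not expect any serious obstacle. The only subtle point is verifying that the chosen test functions are legal in both settings of \eqref{space}/\eqref{space-h}; this is immediate because $\bm{u},\bm{E},\bm{B},p$ themselves are elements of the respective trial spaces and no interpolation is required. The proof therefore transfers verbatim from the continuous to the fully discrete case, which is the whole point of having built Faraday's law and the skew-symmetric convection form into Problem~\ref{prob:semi}.
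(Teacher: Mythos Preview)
Your proposal is correct and follows essentially the same route as the paper's proof: test with $\bm{v}=\bm{u}$, $\bm{F}=\bm{E}$ (you scale by $S$ upfront, the paper scales after), $\bm{C}=\bm{B}$ (you scale by $S/Rm$ upfront), cancel the Lorentz and electromagnetic coupling terms via Ohm's law and the triple-product identity, then apply Young's inequality to $(\bm{f},\bm{u})$ and integrate. The only cosmetic difference is that the paper does not explicitly invoke \eqref{div_u_free2} with $q=p$ to kill the pressure term, but this is implicit in its first displayed line.
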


\begin{proof}
Taking {$\bm{v}=\bm{u}$} in \eqref{mom2}, we obtain
\begin{align}\label{energy1}
{1 \over 2}\frac{d}{dt}\|\bm{u}\|^2 + \frac{1}{Re} \|\nabla \bm{u}\|^2 
= - S (\bm{j}, \bm{u}\times \bm{B}) + (\bm{f}, \bm{u}).
\end{align}
Taking {$\bm{C}=\bm{B}$} in \eqref{induction2} and {$\bm{F}=\bm{E}$} in \eqref{curl_B2} , we obtain
\begin{equation}\label{energy2}
\frac{1}{2}\frac{d}{dt}\|\bm{B}\|^2 = - (\nabla\times \bm{E}, \bm{B})
=- Rm (\bm{j}, \bm{E}).
\end{equation}
Adding \eqref{energy1} and \eqref{energy2} and using Ohm's law \eqref{ohm1},
\begin{align*}
{1 \over 2}\frac{d}{dt}\|\bm{u}\|^2 + \frac{S}{2 Rm} \frac{d}{dt}\|\bm{B}\|^2
+ \frac{1}{Re} \|\nabla \bm{u}\|^2 
&= - S (\bm{j},\bm{E}+ \bm{u}\times \bm{B}) + (\bm{f}, \bm{u}) \\
&= - S \|\bm{j}\|^{2} + (\bm{f}, \bm{u}),  
\end{align*}
which leads to \eqref{energy-identity}. 

Using the simple inequality, 
$$
(\bm{f}, \bm{u})\le \|\bm{f}\|_{-1,\Omega}\|\nabla\bm{u}\|_{0,\Omega}
\le
{Re \over2}\|\bm{f}\|_{-1,\Omega}^2+{1 \over 2Re }\|\nabla\bm{u}\|_{0,\Omega}^2, 
$$
we obtain
$$
\frac{1}{2}\frac{d}{dt} \left( \|\bm{u}\|^2
+ \frac{S}{Rm} \|\bm{B}\|^2 \right) + \frac{1}{2Re}\|\nabla
\bm{u}\|^2+ S \|\bm{j}\|^2 \leq \frac{Re}{2}
\|\bm{f}\|_{-1}^{2}. 
$$
The second estimates then follow easily. 
\end{proof}

The above energy estimate for the continuous MHD system
\eqref{mom1}-\eqref{div_B_free1} is well-known, see e.g.
\cite{Liu.J;Wang.W.2001a,Liu.J;Wang.W.2004a,Liu.C.2009a}.  We have designed our finite
element discrete scheme in such a way that a similar cancellation also
occurs on the discrete level, as a result we have extended this
estimate to finite element semi-discrete system
\eqref{mom1}-\eqref{div_B_free1}.

Similarly, we have the following energy estimate for Problem \ref{prob:nonlinear}. 
\begin{theorem}\label{nonlinear_energy} 
  For any {$(\bm{u}^n, \bm{B}^n, \bm{E}^n)\in \bm{X}$} and {$p^n\in Q$}
  that satisfy \eqref{nonlinear1}-\eqref{nonlinear4} the following
  energy estimates hold
\begin{align*}
& \lefteqn{ \|\bm{u}^{n}\|^2 + \frac{S}{Rm} \|\bm{B}^{n}\|^2+ \frac{2k}{Re}
  \|\nabla \bm{u}^n\|^2  
  + 2k S \|\bm{j}^n\|^{2}}\\
\leq& ~ \|\bm{u}^{n-1}\|^2 + \frac{S}{Rm} \|\bm{B}^{n-1}\|^2 + 2 k (\bm{f}^{n}, \bm{u}^{n}). 
\end{align*}
and
\begin{align*}
& \max_{0\leq j\leq n} \left( \|\bm{u}^{j}\|^2 + \frac{S}{Rm} \|\bm{B}^{j}\|^2 \right) 
+ \sum_{j=1}^{n} \frac{k}{Re} \|\nabla \bm{u}^{j}\|^2
+ 2k S \sum_{i=1}^{n} \|\bm{j}^{i}\|^2 \\
\leq & ~ \|\bm{u}^{0}\|^2 + \frac{S}{Rm} \|\bm{B}^{0}\|^2 + k \sum_{j=1}^{n} {Re}\|\bm{f}^{j}\|_{-1}^{2}.
\end{align*}

\end{theorem}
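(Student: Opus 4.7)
The plan is to mimic the proof of Theorem~\ref{thm:energy0}, replacing the continuous time derivatives with the backward Euler difference quotients and exploiting the same algebraic cancellations, which are preserved by the mixed choice of spaces.

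First, I would take the admissible test functions $\bm{v} = \bm{u}^n$, $\bm{F} = \bm{E}^n$, $\bm{C} = (S/Rm)\bm{B}^n$, and $q = p^n$ in \eqref{nonlinear1}--\eqref{nonlinear4}. The skew-symmetrized convection term vanishes identically, and the pressure term $(p^n, \nabla\cdot \bm{u}^n)$ is eliminated by \eqref{nonlinear4}, exactly as in the continuous case. For the time-difference terms, I would use the elementary identity
\begin{equation*}
2\left(\frac{a^n-a^{n-1}}{k}, a^n\right) = \frac{1}{k}\bigl(\|a^n\|^2 - \|a^{n-1}\|^2\bigr) + \frac{1}{k}\|a^n-a^{n-1}\|^2,
\end{equation*}
applied to both $\bm{u}$ and $\bm{B}$. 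The last squared-difference contribution is nonnegative and can be discarded, producing the correct inequality direction in the stated estimate.

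Next, I would add the three resulting identities. The coupling terms combine exactly as in the continuous proof: using Ohm's law $\bm{j}^n = \bm{E}^n + \bm{u}^n\times\bm{B}^n$, the Lorentz contribution $-S(\bm{j}^n\times\bm{B}^n,\bm{u}^n) = -S(\bm{j}^n, \bm{u}^n\times\bm{B}^n)$ combines with $-S(\bm{j}^n,\bm{E}^n)$ (coming from $(S/Rm)\cdot Rm$ times the $\bm{E}^n$--$\bm{B}^n$ pairing in \eqref{nonlinear2}--\eqref{nonlinear3}) to yield $-S\|\bm{j}^n\|^2$ on the right, which we then move to the left. This step uses crucially that $\nabla\times \bm{V}^c_h \subset \bm{V}^d_h$, so that $\bm{B}^n$ is an admissible test function for $\nabla\times\bm{E}^n$ without any variational crime. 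Dividing through by $2$ and multiplying by $2k$ then gives the first (per-step) estimate.

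For the second (cumulative) estimate I would bound $(\bm{f}^n,\bm{u}^n)\le \|\bm{f}^n\|_{-1}\|\nabla\bm{u}^n\|\le \tfrac{Re}{2}\|\bm{f}^n\|_{-1}^2 + \tfrac{1}{2Re}\|\nabla\bm{u}^n\|^2$, absorbing the gradient term into the viscous dissipation on the left, and then telescope/sum the resulting per-step inequality from $j=1$ to $j=n$. Taking the maximum over $j$ on the left recovers the $\max_{0\le j\le n}$ expression. The only minor point requiring care is the $\mathbb{P}\nabla\cdot\bm{u}$ business in the norm definitions: since $q = p^n \in Q_h$, the Galerkin orthogonality \eqref{nonlinear4} is exactly what is needed to kill the pressure term regardless of whether $\nabla\cdot \bm{V}_h\subset Q_h$. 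There is no real obstacle; the main conceptual content is simply that the discrete scheme has been designed so that the same cancellations as in Theorem~\ref{thm:energy0} go through verbatim.
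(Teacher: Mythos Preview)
Your proposal is correct and follows essentially the same route the paper indicates: the paper omits the proof, stating it is analogous to Theorems~\ref{thm:energy0} and~\ref{thm:energy1}, and your test-function choices, the backward-Euler identity, the Ohm's-law cancellation, and the Young-inequality/telescoping argument are precisely what those two proofs do. The only cosmetic difference is that the paper takes $\bm{C}=\bm{B}^n$ and multiplies the resulting identity by $S/Rm$ rather than testing directly with $\bm{C}=(S/Rm)\bm{B}^n$, which is of course equivalent.
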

The proof of the above theorem is omitted here as it is analogous to
that of Theorem \ref{thm:energy0} and of Theorem \ref{thm:energy1}
below.
%
%

\subsection{Linearization of the nonlinear Problem
  \ref{prob:nonlinear}}
For each time-step, Problem \ref{prob:nonlinear} is a system of
nonlinear equations.  We can use either Picard or Newton iteration
or a combination of the two to linearize these nonlinear systems.

Picard linearization is obtained by fixing certain variables of the
nonlinear terms and solve the remaining linear terms; this does not 
yield a unique linearization. We prove that the following Picard 
linearization scheme has some desirable mathematical properties. 
\begin{algorithm}[Picard iteration]\label{alg:picard}
Given
$$
(\bm{u}^{n,0},\bm{E}^{n,0},\bm{B}^{n,0},p^{n,0})
=(\bm{u}^{n-1},\bm{E}^{n-1},\bm{B}^{n-1},p^{n-1}),
$$
find
$
(\bm{u}^{n,m},\bm{E}^{n,m},\bm{B}^{n,m},p^{n,m})\in \bm{X}\times Q
$ (for {$m=1,2,3,\ldots$} ), such that for any 
{$(\bm{v},\bm{F},\bm{C},q)\in \bm{X}\times Q$},
\begin{dgroup*}
\begin{dmath*}
\left( \frac{\bm{u}^{n,m}-\bm{u}^{n-1}}{k},\bm{v} \right) 
+  {1 \over 2} [(\bm{u}^{n,m-1}\cdot \nabla\bm{u}^{n,m},\bm{v})
- (\bm{u}^{n,m-1}\cdot \nabla\bm{v}, \bm{u}^{n,m})]
+ \frac{1}{Re} (\nabla \bm{u}^{n,m}, \nabla \bm{v})  
- S (\bm{j}^{n,m}_{n,m-1}\times \bm{B}^{n,m-1},\bm{v})
- (p^{n,m},\nabla\cdot \bm{v}) = (\bm{f}^{n},\bm{v}),   
\end{dmath*}
\begin{dmath*}
(\bm{j}^{n,m}_{n,m-1},\bm{F}) 
- \frac{1}{Rm} (\bm{B}^{n,m}, \nabla\times \bm{F}) = 0, 
\end{dmath*}
\begin{dmath*}
\left( \frac{\bm{B}^{n,m}-\bm{B}^{n-1}}{k}, \bm{C} \right) 
+ (\nabla \times \bm{E}^{n,m}, \bm{C} ) = 0,   
\end{dmath*}
\begin{dmath*}
(\nabla\cdot \bm{u}^{n,m}, q) =0.
\end{dmath*}
\end{dgroup*}
where {$\bm{j}^{n,m}_{n,m-1} := \bm{E}^{n,m}+\bm{u}^{n,m}\times \bm{B}^{n,m-1} $}.
\end{algorithm}

Unlike Picard linearization, the Newton linearization technique gives a unique
linearization, by simply taking the Fr\'{e}chet derivatives of
the nonlinear terms in \eqref{mom2}-\eqref{curl_B2}.  We choose not to
modify the convection term {$( \bm{u}\cdot \nabla \bm{u}, \bm{v})$} by
\eqref{modified-convection} in our Newton linearization scheme because
such a modification does not improve the formulation's mathematical
properties.

\begin{algorithm}[Newton iteration]
\label{alg:newton}
Given 
$$
(\bm{u}^{n,0},\bm{E}^{n,0},\bm{B}^{n,0},p^{n,0})
=(\bm{u}^{n-1},\bm{E}^{n-1},\bm{B}^{n-1},p^{n-1}),
$$
find {$(\bm{\xi}^{n,m},p^{n,m})\in \bm{X}\times Q$} 
(for {$m=1,2,3,\ldots$}), such that for any 
{$(\bm{\eta}, q)\in \bm{X}\times Q$},
\begin{dgroup*}
\begin{dmath*}
\left( \frac{\bm{u}^{n,m}-\bm{u}^{n-1}}{k},\bm{v} \right) 
+ (\bm{u}^{n,m-1}\cdot \nabla\bm{u}^{n,m},\bm{v})
+ (\bm{u}^{n,m}\cdot \nabla\bm{u}^{n,m-1}, \bm{v})
+ \frac{1}{Re} (\nabla \bm{u}^{n,m}, \nabla \bm{v})
- S ((\bm{E}^{n,m}+\bm{u}^{n,m}\times \bm{B}^{n,m-1}
+ \bm{u}^{n,m-1}\times \bm{B}^{n,m})\times \bm{B}^{n,m-1},\bm{v})
- S ((\bm{E}^{n,m-1}+\bm{u}^{n,m-1}\times \bm{B}^{n,m-1})\times \bm{B}^{n,m}, \bm{v}) 
- (p^{n,m},\nabla\cdot \bm{v}) 
= (\bm{f}^{n,m}_{N},\bm{v}),
\end{dmath*}
\begin{dmath*}
(\bm{E}^{n,m}+\bm{u}^{n,m}\times \bm{B}^{n,m-1}
+ \bm{u}^{n,m-1}\times \bm{B}^{n,m} ,\bm{F})
- \frac{1}{Rm} ( \bm{B}^{n,m}, \nabla\times \bm{F}) 
=(\bm{\phi}_{N}^{n,m}, \bm{F}), 
\end{dmath*}
\begin{dmath*}
\left( \frac{ \bm{B}^{n,m}-\bm{B}^{n-1}}{k}, \bm{C} \right) 
+ (\nabla\times \bm{E}^{n,m}, \bm{C}) = 0, 
\end{dmath*}
\begin{dmath*}
(\nabla\cdot \bm{u}^{n,m}, q) =0.
\end{dmath*}
\end{dgroup*}
where 

\begin{align*}
\bm{f}_{N}^{n,m}
&= \bm{f}^{n} + (\bm{u}^{n,m-1}\cdot
\nabla)\bm{u}^{n,m-1} - S \bm{E} ^{n,m-1}\times {\bm{B}} ^{n,m-1}\\
&- 2 S ({\bm{u}} ^{n,m-1}\times {\bm{B}}^{n,m-1})\times {\bm{B}}
^{n,m-1},\\
\bm{\phi}_{N}^{n,m} &= {\bm{u}} ^{n,m-1}\times{\bm{B}}^{n,m-1}.
\end{align*}

\end{algorithm}

\begin{theorem}
  Let {$\bm{X}\times Q$} be given as in \eqref{space-h}.  Then, for
  sufficiently small {$k$}, both Algorithm \ref{alg:picard} and
  Algorithm \ref{alg:newton} are well-defined and have unique sequence
  of solutions.
$$
(\bm{u}^{n,m}, \bm{E}^{n,m}, \bm{B}^{n,m}, p^{n,m}), \quad 0\le n\le T/k
$$
satisfying
  \begin{equation}
    \label{divB0}
\nabla\cdot \bm{B}^{n,m}=0, \quad \forall n\ge 0, m\ge 0. 
  \end{equation}
\end{theorem}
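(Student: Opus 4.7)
The plan is to cast each linearization as a linear mixed variational problem of saddle-point type, apply Brezzi's theory in the weighted norms $\|\cdot\|_{\bm{X}}$ and $\|\cdot\|_{Q}$ introduced earlier, and deduce the Gauss law exactly as in the nonlinear theorem already proved. Grouping $\bm{\xi}^{n,m}=(\bm{u}^{n,m},\bm{E}^{n,m},\bm{B}^{n,m})\in\bm{X}$, I would rewrite either algorithm as: find $(\bm{\xi}^{n,m},p^{n,m})\in \bm{X}\times Q$ such that
$$
a(\bm{\xi}^{n,m},\bm{\eta}) + b(\bm{\eta},p^{n,m}) = \langle F,\bm{\eta}\rangle,\qquad b(\bm{\xi}^{n,m},q)=0,
$$
for all $(\bm{\eta},q)\in\bm{X}\times Q$, where $b(\bm{\xi},q)=-(\nabla\cdot\bm{u},q)$ and $a$ collects all remaining bilinear terms, with coefficients frozen at the previous iterate $\bm{\xi}^{n,m-1}$. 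Well-posedness in the Brezzi sense reduces to: (i) continuity of $a$ and $b$; (ii) inf-sup for $b$; (iii) coercivity of $a$ on the kernel $\bm{X}^{\bm{u},0}$.

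Step (ii) is immediate from the Stokes inf-sup condition \eqref{inf-sup} together with the weights in $\nmgrad{\cdot}$ and $\nmzero{\cdot}$. Continuity in (i) follows from Cauchy--Schwarz and standard Hölder/Sobolev embeddings, provided the previous iterate is bounded in $\bm{X}$; this is propagated by induction on $m$ starting from the initial choice $\bm{\xi}^{n,0}=\bm{\xi}^{n-1}$, whose boundedness is guaranteed by the discrete energy estimate of Theorem \ref{nonlinear_energy}. Step (iii) is the heart of the argument: testing with $\bm{\eta}=\bm{\xi}^{n,m}$ in the Picard case, the modified convection $\tfrac12[(\bm{u}^{n,m-1}\cdot\nabla\bm{u}^{n,m},\bm{u}^{n,m})-(\bm{u}^{n,m-1}\cdot\nabla\bm{u}^{n,m},\bm{u}^{n,m})]$ vanishes by skew-symmetry, the Lorentz term $-S(\bm{j}^{n,m}_{n,m-1}\times\bm{B}^{n,m-1},\bm{u}^{n,m})$ cancels against $S(\bm{j}^{n,m}_{n,m-1},\bm{u}^{n,m}\times\bm{B}^{n,m-1})$ coming from Ohm's law, and what remains are the positive terms $\tfrac{1}{k}\|\bm{u}^{n,m}\|^2 + \tfrac{1}{Re}\|\nabla\bm{u}^{n,m}\|^2 + S\|\bm{j}^{n,m}_{n,m-1}\|^2 + \tfrac{S}{Rm\,k}\|\bm{B}^{n,m}\|^2$. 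Combined with the algebraic identity $\bm{E}^{n,m}=\bm{j}^{n,m}_{n,m-1}-\bm{u}^{n,m}\times\bm{B}^{n,m-1}$ and a triangle inequality using the $L^\infty$-bound on $\bm{B}^{n,m-1}$ (available in the finite-dimensional setting), this controls $\nmcurl{\bm{E}^{n,m}}$ up to the curl term, which is in turn controlled via the induction equation by $k^{-1}\|\bm{B}^{n,m}-\bm{B}^{n-1}\|$. For Newton, the same energy test produces the same dominant positive terms, but the Fréchet-derivative corrections such as $(\bm{u}^{n,m}\cdot\nabla\bm{u}^{n,m-1},\bm{u}^{n,m})$ and the extra cross terms in the Lorentz force are not sign-definite; they are absorbed by using $L^p$-$L^q$ Hölder estimates against the previous iterate together with a smallness condition on $k$.

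The divergence-free conclusion \eqref{divB0} is obtained exactly as in the theorem already proved for Problem \ref{prob:nonlinear}: the third equation of each algorithm reads
$$
\frac{\bm{B}^{n,m}-\bm{B}^{n-1}}{k}+\nabla\times\bm{E}^{n,m}=\bm{0}
$$
identically in $\bm{V}_h^{d}$, because $\nabla\times\bm{V}_h^{c}\subset\bm{V}_h^{d}$ by the discrete de Rham sequence of Figure \ref{exact-sequence} and the equation holds for all test functions $\bm{C}\in\bm{V}_h^{d}$. Taking the divergence gives $\nabla\cdot\bm{B}^{n,m}=\nabla\cdot\bm{B}^{n-1}$, and a double induction on $n$ and the starting condition \eqref{B0hdiv0} yield \eqref{divB0}.

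I expect the main obstacle to be the coercivity on the kernel for the Newton iteration: the linearization is not symmetric, the added derivative terms involve products against $\bm{u}^{n,m-1}$, $\bm{B}^{n,m-1}$, and $\bm{E}^{n,m-1}$ that must first be shown bounded (by propagating a uniform bound along the iteration from the initial data) and then absorbed into the coercive part at the cost of requiring $k$ to be small in terms of those bounds. The Picard case is substantially easier because the skew-symmetrization and Ohm--Faraday cancellation do all the heavy lifting, and the smallness requirement on $k$ can in principle be relaxed.
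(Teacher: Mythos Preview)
Your overall strategy matches the paper exactly: reformulate each linearization step as a saddle-point problem, verify Brezzi's conditions in the weighted norms, and obtain \eqref{divB0} from the strong form of the third equation together with $\nabla\times\bm{V}_h^c\subset\bm{V}_h^d$. The divergence-free argument you give is correct and identical to the paper's.

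The gap is in step (iii). Testing with $\bm{\eta}=\bm{\xi}$ does \emph{not} yield coercivity of $a$ in the $\bm{X}$-norm: the diagonal test produces
\[
k^{-1}\|\bm{u}\|^2+\tfrac{1}{Re}\|\nabla\bm{u}\|^2+S\|\bm{j}^{n,m}_{n,m-1}\|^2+\tfrac{S}{Rm\,k}\|\bm{B}\|^2,
\]
but this controls neither $k\|\nabla\times\bm{E}\|^2$ nor $\|\nabla\cdot\bm{B}\|^2$, both of which sit in $\|\bm{\xi}\|_{\bm{X}}^2$. Your proposed fix, recovering $\|\nabla\times\bm{E}\|$ from the induction equation via $k^{-1}\|\bm{B}^{n,m}-\bm{B}^{n-1}\|$, uses the right-hand side data $\bm{B}^{n-1}$ and is therefore an a~priori estimate on the solution, not coercivity of the bilinear form; it does not fit the Brezzi hypothesis you invoked. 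The paper resolves both issues at once: it augments $a_0$ by the penalty term $\tfrac{S}{Rm}(\nabla\cdot\bm{B},\nabla\cdot\bm{C})$ to form $a$, and then, instead of the diagonal test, takes $\bm{v}=\bm{u}$, $\bm{F}=\bm{E}$, and the tilted choice $\bm{C}=\tfrac12(\bm{B}+k\,\nabla\times\bm{E})$, which manufactures the missing $\tfrac{Sk}{2Rm}\|\nabla\times\bm{E}\|^2$ from the cross term $(\nabla\times\bm{E},\bm{C})$. This is an inf-sup condition for $a$ on $\bm{X}^{\bm{u},0}$, not plain coercivity. The equivalence between the augmented problem (with the div penalty) and the original one is then recovered a~posteriori by showing $\nabla\cdot\bm{B}=0$ for the augmented solution. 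The Newton case is handled with the same tilted test function; the non-symmetric perturbation terms are absorbed exactly as you describe, at the cost of $k$ small depending on $L^\infty$ bounds of the previous iterate.

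One further remark: your claim that for Picard ``the smallness requirement on $k$ can in principle be relaxed'' is not borne out by the paper's argument. Even in the Picard case the paper needs $k\le \tfrac{1}{8S}\|\bm{B}^-\|_{0,\infty}^{-2}$ so that $\tfrac12 k^{-1}\|\bm{u}\|^2$ can absorb $S\|\bm{u}\times\bm{B}^-\|^2$ when splitting $\|\bm{E}+\bm{u}\times\bm{B}^-\|^2\ge\tfrac12\|\bm{E}\|^2-\|\bm{u}\times\bm{B}^-\|^2$.
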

The above theorem is similar to Theorem \ref{thm:picard} and has an 
analogous proof. 

We would like to point out that, using a fixed-point argument similar
to that in \cite{Lee.Y;Xu.J;Zhang.C.2011a}, it is possible to establish the following
results: At each time step, if {$k$} is sufficiently small,
  \begin{enumerate}
  \item the nonlinear problem on discrete level
    \eqref{nonlinear1}-\eqref{nonlinear4} has a solution in
    {$\bm{X}_{h}\times {Q}_{h}$}.
  \item Both Algorithm \ref{alg:picard} and Algorithm \ref{alg:newton}
    converge to a solution of Problem \ref{prob:nonlinear} as
    {$m\rightarrow \infty$}.
  \end{enumerate}
The proof for these results are quite technical and lengthy and are not 
included in this paper. In the rest of this paper, we 
focus on the linearized discretization schemes based on Picard
and Newton linearization. 

\section{Linearized discrete schemes}\label{sec:linearization}
In the previous section, we proposed Picard and Newton methods as
iterative linearization schemes to solve the nonlinear discrete Problem
\ref{prob:nonlinear}. Instead of solving the discrete nonlinear Problem
\ref{prob:nonlinear}, however, we can also use the Picard and Newton methods as
a single step linearized discretization scheme at each time step. 

\subsection{Finite element discretization based on Picard and Newton linearization}
Similar to Algorithm \ref{alg:picard}, we propose the following
linearized discrete scheme by the Picard method. 
\begin{problem}\label{prob:picard}
Given $(\bm{u}^{n-1},\bm{E}^{n-1},\bm{B}^{n-1},p^{n-1})\in \bm{X}\times Q$, 
find {$(\bm{u}^{n},\bm{E}^n,\bm{B}^n,p^{n})\in \bm{X}\times Q$} such that 
for any {$(\bm{v},\bm{F},\bm{C},q)\in \bm{X}\times Q$},
\begin{dgroup*}
\begin{dmath}
\left( \frac{\bm{u}^{n}-\bm{u}^{n-1}}{k},\bm{v} \right) 
+  {1 \over 2} [(\bm{u}^{n-1}\cdot \nabla\bm{u}^{n},\bm{v})-(\bm{u}^{n-1}\cdot \nabla\bm{v}, \bm{u}^{n})]
+ \frac{1}{Re} (\nabla \bm{u}^{n}, \nabla \bm{v})
- S (\bm{j}^{n}_{n-1}\times \bm{B}^{n-1},\bm{v} )
- (p^{n},\nabla\cdot \bm{v}) 
= (\bm{f}^{n},\bm{v}),  \label{picard01} 
\end{dmath}
\begin{dmath}
(\bm{j}^{n}_{n-1},\bm{F}) 
- \frac{1}{Rm} ( \bm{B}^{n}, \nabla\times \bm{F}) =0, \label{picard02} 
\end{dmath}
\begin{dmath}
\left( \frac{\bm{B}^{n}-\bm{B}^{n-1}}{k}, \bm{C} \right) 
+ (\nabla\times \bm{E}^{n}, \bm{C} ) = 0, \label{picard03}
\end{dmath}
\begin{dmath}
(\nabla\cdot \bm{u}^{n}, q)  = 0, \label{picard04} 
\end{dmath}
\end{dgroup*}
where {$\bm{j}^{n}_{n-1} := \bm{E}^{n}+\bm{u}^{n}\times \bm{B}^{n-1} $}.
\end{problem}

By making the convection term explicit, a simplified
Picard linearization can be obtained,
\begin{problem}\label{prob:picard_explicit}
Given $(\bm{u}^{n-1},\bm{E}^{n-1},\bm{B}^{n-1},p^{n-1})\in \bm{X}\times Q$, 
find {$(\bm{u}^{n},\bm{E}^n,\bm{B}^n,p^{n})\in \bm{X}\times Q$} such that 
for any {$(\bm{v},\bm{F},\bm{C},q)\in \bm{X}\times Q$},
\begin{dgroup*}
\begin{dmath*}
\left( \frac{\bm{u}^{n}-\bm{u}^{n-1}}{k},\bm{v} \right) 
+  {1 \over 2} [(\bm{u}^{n-1}\cdot \nabla\bm{u}^{n-1},\bm{v})-(\bm{u}^{n-1}\cdot \nabla\bm{v}, \bm{u}^{n-1})]
+ \frac{1}{Re} (\nabla \bm{u}^{n}, \nabla \bm{v})
- S (\bm{j}^{n}_{n-1}\times \bm{B}^{n-1},\bm{v} )
- (p^{n},\nabla\cdot \bm{v}) = (\bm{f}^{n},\bm{v}).
\end{dmath*}
\begin{dmath*}
(\bm{j}^{n}_{n-1},\bm{F}) 
- \frac{1}{Rm} ( \bm{B}^{n}, \nabla\times \bm{F}) =0,
\end{dmath*}
\begin{dmath*}
\left( \frac{\bm{B}^{n}-\bm{B}^{n-1}}{k}, \bm{C} \right) 
+ (\nabla\times \bm{E}^{n}, \bm{C} ) = 0, 
\end{dmath*}
\begin{dmath*}
(\nabla\cdot \bm{u}^{n}, q)  = 0,  
\end{dmath*}
\end{dgroup*}
with the same $\bm{j}_{n-1}^{n}$ as that in Problem \ref{prob:picard}.
\end{problem}
One feature of this scheme is that the underlying stiffness matrix 
is symmetrized. 

Analogous to Algorithm \ref{alg:newton}, we propose the following
linearized discrete scheme by Newton method. 
\begin{problem}
\label{prob:newton0} 
Given $(\bm{u}^{n-1},\bm{E}^{n-1},\bm{B}^{n-1},p^{n-1})\in \bm{X}\times Q$, 
find {$(\bm{u}^{n},\bm{E}^n,\bm{B}^n,p^{n})\in \bm{X}\times Q$}
such that for any {$(\bm{\eta}, q)\in \bm{X}\times Q$},
\begin{dgroup*}
\begin{dmath}
\left( \frac{\bm{u}^{n}-\bm{u}^{n-1}}{k},\bm{v} \right) 
+ (\bm{u}^{n-1}\cdot \nabla\bm{u}^{n},\bm{v})
+ (\bm{u}^{n}\cdot \nabla\bm{u}^{n-1}, \bm{v})
+ \frac{1}{Re} (\nabla \bm{u}^{n}, \nabla \bm{v})
- S ((\bm{E}^{n}+\bm{u}^{n}\times \bm{B}^{n-1}
+ \bm{u}^{n-1}\times \bm{B}^{n})\times \bm{B}^{n-1},\bm{v})
- S ((\bm{E}^{n-1}+\bm{u}^{n-1}\times \bm{B}^{n-1})\times \bm{B}^{n}, \bm{v}) 
 - (p^{n},\nabla\cdot \bm{v}) = (\bm{f}^{n}_{N},\bm{v}),\label{newton1} 
\end{dmath}
\begin{dmath}
(\bm{E}^{n} + \bm{u}^{n}\times \bm{B}^{n-1}
+ \bm{u}^{n-1}\times \bm{B}^{n} ,\bm{F}) 
- \frac{1}{Rm} ( \bm{B}^{n}, \nabla\times \bm{F}) 
= (\bm{\phi}_{N}^{n}, \bm{F}), \label{newton2} 
\end{dmath}
\begin{dmath}
\left( \frac{ \bm{B}^{n}-\bm{B}^{n-1}}{k}, \bm{C} \right) 
+ (\nabla\times \bm{E}^{n}, \bm{C}) = 0, \label{newton3}
\end{dmath}
\begin{dmath}
(\nabla\cdot \bm{u}^{n}, q) = 0, \label{newton4} 
\end{dmath}
\end{dgroup*}
where 
\begin{dgroup*}
\begin{dmath*}
\bm{f}_{N}^{n}= \bm{f}^{n}
+ (\bm{u}^{n-1}\cdot \nabla)\bm{u}^{n-1}
- S \bm{E} ^{n-1}\times {\bm{B}} ^{n-1}  
 - 2 S ({\bm{u}} ^{n-1}\times {\bm{B}}^{n-1})\times {\bm{B}} ^{n-1},
\end{dmath*}
\begin{dmath*}
\bm{\phi}_{N}^{n} = {\bm{u}} ^{n-1}\times{\bm{B}} ^{n-1}.
\end{dmath*}
\end{dgroup*} 
\end{problem}

While the above linearized schemes are valid for both the continuous and
discrete cases, we state the following theorem for the discrete scheme
only. 
\begin{theorem}\label{thm:picard} 
Given {$\bm{X}\times Q$} as in \eqref{space-h}, 
assume that 
$$
\bm{u}^0, \bm{B}^0\in L^2(\Omega),  ~k\sum_{j=1}^{n}\|\bm{f}^{j}\|_{-1}^{2}<\infty.
$$ 
Then, for  sufficiently small {$k$}, 
we have for all $n \geq 1$,
\begin{enumerate}
\item Both Problems \ref{prob:picard} and \ref{prob:newton0}
  have unique global solutions:
$$
(\bm{u}^n, \bm{E}^n, \bm{B}^n, p^n), \quad 0\le n\le T/k.
$$
\item Solutions to Problems \ref{prob:picard} and 
\ref{prob:newton0} satisfy the following property strongly:
\begin{equation}
    \label{divB0}
\nabla\cdot \bm{B}^n=0.
\end{equation}
\end{enumerate}
\end{theorem}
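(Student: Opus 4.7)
The plan is to verify claim (2) first by a direct structural argument, then establish claim (1) via an application of Brezzi theory in the weighted norms introduced in Section \ref{sec:variation}, treating Picard and Newton in parallel with the Newton case needing one extra absorption step.

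For the divergence-free property, I would observe that both the Picard equation \eqref{picard03} and the Newton equation \eqref{newton3} say that $(\bm{r}^n,\bm{C})=0$ for all $\bm{C}\in\bm{V}_h^d$, where $\bm{r}^n:=(\bm{B}^n-\bm{B}^{n-1})/k+\nabla\times\bm{E}^n$. The key is that the exact discrete de Rham complex in Figure \ref{exact-sequence} gives $\nabla\times \bm{V}_h^c\subset \bm{V}_h^d$, so $\bm{r}^n\in\bm{V}_h^d$; testing with $\bm{C}=\bm{r}^n$ forces $\bm{r}^n=0$ pointwise. Taking divergence and using $\nabla\cdot(\nabla\times\bm{E}^n)=0$ yields $\nabla\cdot\bm{B}^n=\nabla\cdot\bm{B}^{n-1}$, and induction together with \eqref{B0hdiv0} concludes the proof. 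This is exactly the argument used in the earlier theorem on the nonlinear scheme, and it transfers verbatim because both linearizations preserve the structure of Faraday's law.

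For well-posedness, since the problems are linear and finite-dimensional, existence is equivalent to uniqueness, so it suffices to prove a priori stability. I would recast Problem \ref{prob:picard} (resp.\ Problem \ref{prob:newton0}) as a saddle-point system with bilinear forms
\begin{equation*}
\mathcal{A}_{\rm P}\bigl((\bm{u},\bm{E},\bm{B}),(\bm{v},\bm{F},\bm{C})\bigr),\qquad \mathcal{B}\bigl((\bm{u},\bm{E},\bm{B}),q\bigr)=-(\nabla\cdot\bm{u},q),
\end{equation*}
scale the induction and Ohm equations by the factors $S$ and $S/Rm$ used in Theorem \ref{thm:energy0}, and then verify the Brezzi conditions. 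The inf-sup condition for $\mathcal{B}$ in the weighted norms $\nmgrad{\cdot}$ and $\nmzero{\cdot}$ reduces to the Stokes inf-sup \eqref{inf-sup} after rescaling $p\mapsto kp$. Continuity of $\mathcal{A}_{\rm P}$ in the $\bm{X}$-norm is routine from Hölder/Sobolev embeddings together with the fact that $\|\bm{u}^{n-1}\|,\|\bm{B}^{n-1}\|$ are finite in the finite-dimensional space. The heart of the argument is coercivity on the kernel: testing with $(\bm{v},\bm{F},\bm{C})=(\bm{u},S\bm{E},S/Rm\,\bm{B})$ in the Picard form, the skew-symmetrized convection term vanishes by construction \eqref{modified-convection}, and the Lorentz/Ohm coupling $-S(\bm{E}\times\bm{B}^{n-1},\bm{u})+S(\bm{u}\times\bm{B}^{n-1},\bm{E})$ along with the Faraday/induction coupling $-S/Rm\,(\bm{B},\nabla\times\bm{E})+S/Rm\,(\nabla\times\bm{E},\bm{B})$ cancel exactly, leaving
\begin{equation*}
\mathcal{A}_{\rm P}\bigl((\bm{u},\bm{E},\bm{B}),(\bm{u},S\bm{E},\tfrac{S}{Rm}\bm{B})\bigr)\gtrsim \nmgrad{\bm{u}}^2+\nmcurl{\bm{E}}^2+\nmdiv{\bm{B}}^2,
\end{equation*}
on the kernel of $\mathcal{B}$ (the $k^{-1}\|\mathbb{P}\nabla\cdot\bm{u}\|^2$ term drops there, and $\|\nabla\cdot\bm{B}\|^2$ is handled because part (2) shows $\nabla\cdot\bm{B}=0$ in the discrete space).

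The main obstacle is the Newton case, which is where the smallness hypothesis on $k$ enters. In Problem \ref{prob:newton0} the extra terms $(\bm{u}\cdot\nabla\bm{u}^{n-1},\bm{v})$ and the mixed Lorentz contributions $-S(\bm{u}^{n-1}\times\bm{B}^{n})\times\bm{B}^{n-1}$ etc.\ are not antisymmetric and do not cancel under the diagonal test. I would bound them by Hölder's inequality and Sobolev embedding in the form
\begin{equation*}
\bigl|(\bm{u}\cdot\nabla\bm{u}^{n-1},\bm{u})\bigr|\le C\|\bm{u}^{n-1}\|_{1}\|\nabla\bm{u}\|\|\bm{u}\|_{L^3}\le C(\bm{u}^{n-1})\,\|\nabla\bm{u}\|^{3/2}\|\bm{u}\|^{1/2},
\end{equation*}
and similarly for the Lorentz cross-terms using bounds on $\bm{B}^{n-1}$ in $L^3$, then absorb each factor via Young's inequality into $\|\nabla\bm{u}\|^2+\|\bm{E}\|^2+k^{-1}\|\bm{u}\|^2+k^{-1}\|\bm{B}\|^2$, paying a factor of $k$ from the mass terms so that the constants in front of the absorbed quantities are small when $k$ is small. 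With coercivity restored on the kernel, Brezzi's theorem furnishes a unique solution in $\bm{X}_h\times Q_h$, completing (1). The conclusion (2) for both schemes then follows from the divergence-free argument above applied to this unique solution.
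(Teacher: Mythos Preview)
Your divergence-free argument for part (2) is fine and matches the paper. The gap is in your coercivity step for part (1). With the diagonal test $(\bm{v},\bm{F},\bm{C})=(\bm{u},\bm{E},\bm{B})$ (up to scaling) the Faraday/Ohm cross terms $-\frac{S}{Rm}(\bm{B},\nabla\times\bm{E})+\frac{S}{Rm}(\nabla\times\bm{E},\bm{B})$ indeed cancel, but what survives is only
\[
k^{-1}\|\bm{u}\|^{2}+\tfrac{1}{Re}\|\nabla\bm{u}\|^{2}+S\|\bm{E}+\bm{u}\times\bm{B}^{-}\|^{2}+\tfrac{S}{Rm}\,k^{-1}\|\bm{B}\|^{2}.
\]
There is no $k\|\nabla\times\bm{E}\|^{2}$ term here, so you cannot conclude $\gtrsim\nmcurl{\bm{E}}^{2}$. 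The paper repairs exactly this by tilting the magnetic test function to $\bm{C}=\tfrac12(\bm{B}+k\nabla\times\bm{E})$, which produces the missing $\tfrac{Sk}{2Rm}\|\nabla\times\bm{E}\|^{2}$ (see Lemma~\ref{picard_lemma}). Note also that the Lorentz/Ohm cross terms you say ``cancel exactly'' do not cancel; they assemble into $S\|\bm{E}+\bm{u}\times\bm{B}^{-}\|^{2}$, and recovering $\|\bm{E}\|^{2}$ from this already forces $k\lesssim\|\bm{B}^{-}\|_{0,\infty}^{-2}$ even for Picard.

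The second issue is your handling of $\|\nabla\cdot\bm{B}\|^{2}$. Brezzi coercivity must hold for \emph{every} $\bm{\xi}$ in the kernel of $\mathcal{B}$, and that kernel is $\bm{X}^{\bm{u},0}$, which constrains only $\bm{u}$, not $\bm{B}$. Invoking part (2) is circular: part (2) is a statement about the particular solution, not about generic test elements. The paper's device is to augment $\bm{a}_{0}$ with the grad--div term $\frac{S}{Rm}(\nabla\cdot\bm{B},\nabla\cdot\bm{C})$ to form $\bm{a}$, prove the inf--sup for $\bm{a}$ on all of $\bm{X}^{\bm{u},0}$, and then show via Lemma~\ref{lem:graddiv0} that for the actual right-hand side the augmented and original problems coincide because the solution satisfies $\nabla\cdot\bm{B}=0$. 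If you only want existence and uniqueness (which is all Theorem~\ref{thm:picard} asserts), your diagonal test \emph{does} suffice to show the homogeneous system has only the trivial solution in finite dimensions; but the full $\bm{X}$-norm stability you wrote down is false without the two fixes above.
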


\begin{remark}
The quantity {$\left( k\sum_{j=1}^{n}\|\bm{f}^{j}\|_{-1}^{2} \right)^{1/2}$} is a discretization of 
{$L^{2}([0,T]; H^{-1}(\Omega))$} norm. 
\end{remark}

\begin{remark}
For the Picard iteration Problem \ref{prob:picard}, it can be proved
that the above theorem holds if 
\begin{equation}\label{small-k}  
k\lesssim h^3 \left(\|\bm{u}^0\|+   \|\bm{B}^0\|+ \left( \int_0^T\|\bm{f}\|_{-1}^2 \right)^{1/2} \right).
\end{equation}
This condition is rather stringent and whether this
constraint can be relaxed is a subject of further investigation.  We
note, however, that the discrete problem has a global solution for all
{$n$} under this condition. 
\end{remark}

As Theorem \ref{thm:picard} is a special case of Theorem
\ref{thm:picard1} and Theorem \ref{thm:newton1}, its proof is omitted.
\bigskip

In the following subsections, we first focus on Picard
linearization, its energy estimate, and well-posedness. Afterwards, 
we turn to Newton linearization, and give similar results.

\subsection{Energy estimates}
One desirable feature of our Picard linearization scheme Problem
\ref{prob:picard} is that it satisfies an energy estimate
on both continuous and discrete levels that is analogous to the
original problem as shown in Theorem \ref{thm:energy0}. 

\begin{theorem}\label{thm:energy1}
Any solution of Problem \ref{prob:picard} satisfies the following estimates
\begin{align}\nonumber
& \max_{0\leq j\leq n} \left( \|\bm{u}^{j}\|^2 + \frac{S}{Rm} \|\bm{B}^{j}\|^2 \right)
+\sum_{j=1}^{n} \frac{k}{Re} \|\nabla
\bm{u}^{j}\|^2 + 2k S \sum_{i=1}^{n} \|\bm{j}_{i-1}^{i}\|^2 \\ 
\label{Picard-energy}
\leq & ~ \|\bm{u}^{0}\|^2 + \frac{S}{Rm} \|\bm{B}^{0}\|^2 + k \sum_{j=1}^{n} Re \|\bm{f}^{j}\|_{-1}^{2}.
\end{align}
\end{theorem}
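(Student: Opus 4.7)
The plan is to mirror the continuous energy argument of Theorem \ref{thm:energy0} at the discrete level, exploiting the fact that the Picard linearization in Problem \ref{prob:picard} was designed so that the crucial cancellation between the Lorentz force and the Ohm's law coupling still takes place. The key observation is that the Lorentz-like term in \eqref{picard01} and the magnetic induction coupling in \eqref{picard02}--\eqref{picard03} share the same ``linearized current'' $\bm{j}^n_{n-1}=\bm{E}^n+\bm{u}^n\times\bm{B}^{n-1}$, which is exactly what produces a sign-definite dissipation once these equations are tested consistently.

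First, I would test \eqref{picard01} with $\bm{v}=\bm{u}^n$. The skew-symmetrized trilinear form vanishes identically since $\frac{1}{2}[(\bm{u}^{n-1}\!\cdot\!\nabla\bm{u}^n,\bm{u}^n)-(\bm{u}^{n-1}\!\cdot\!\nabla\bm{u}^n,\bm{u}^n)]=0$, and the pressure term $(p^n,\nabla\cdot\bm{u}^n)$ vanishes by \eqref{picard04}. Then I test \eqref{picard02} with $\bm{F}=\bm{E}^n$ and \eqref{picard03} with $\bm{C}=\bm{B}^n$, multiply the latter by $S/Rm$, and combine them to eliminate $(\bm{B}^n,\nabla\times\bm{E}^n)$, obtaining $(\bm{j}^n_{n-1},\bm{E}^n)=-\frac{1}{Rm}\bigl(\tfrac{\bm{B}^n-\bm{B}^{n-1}}{k},\bm{B}^n\bigr)$. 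Using the vector identity $(\bm{a}\times\bm{b})\cdot\bm{c}=-\bm{a}\cdot(\bm{c}\times\bm{b})$ together with Ohm's law $\bm{u}^n\times\bm{B}^{n-1}=\bm{j}^n_{n-1}-\bm{E}^n$, the Lorentz term in the momentum equation becomes
\[
-S(\bm{j}^n_{n-1}\times\bm{B}^{n-1},\bm{u}^n)=S\|\bm{j}^n_{n-1}\|^2+\frac{S}{Rm}\Bigl(\frac{\bm{B}^n-\bm{B}^{n-1}}{k},\bm{B}^n\Bigr).
\]
This yields the discrete energy identity
\[
\Bigl(\tfrac{\bm{u}^n-\bm{u}^{n-1}}{k},\bm{u}^n\Bigr)+\tfrac{S}{Rm}\Bigl(\tfrac{\bm{B}^n-\bm{B}^{n-1}}{k},\bm{B}^n\Bigr)+\tfrac{1}{Re}\|\nabla\bm{u}^n\|^2+S\|\bm{j}^n_{n-1}\|^2=(\bm{f}^n,\bm{u}^n).
\]

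Next, I invoke the standard polarization bound $2(\bm{a}-\bm{b},\bm{a})\ge\|\bm{a}\|^2-\|\bm{b}\|^2$ (dropping the nonnegative $\|\bm{a}-\bm{b}\|^2$) applied to both the velocity and the magnetic field differences, then multiply through by $2k$. The right-hand side is controlled by the Young inequality
\[
2k(\bm{f}^n,\bm{u}^n)\le 2k\,\|\bm{f}^n\|_{-1}\|\nabla\bm{u}^n\|\le kRe\|\bm{f}^n\|_{-1}^2+\tfrac{k}{Re}\|\nabla\bm{u}^n\|^2,
\]
with the gradient term absorbed into the left-hand side. This produces the one-step bound
\[
\|\bm{u}^n\|^2+\tfrac{S}{Rm}\|\bm{B}^n\|^2+\tfrac{k}{Re}\|\nabla\bm{u}^n\|^2+2kS\|\bm{j}^n_{n-1}\|^2\le\|\bm{u}^{n-1}\|^2+\tfrac{S}{Rm}\|\bm{B}^{n-1}\|^2+kRe\|\bm{f}^n\|_{-1}^2.
\]
Finally, I sum this inequality for $j=1,\ldots,n$ so that the kinetic and magnetic terms telescope, and take the maximum over $0\le j\le n$ on the leading term to obtain \eqref{Picard-energy}.

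There is no real conceptual obstacle here; the main thing to get right is the sign bookkeeping in the Lorentz/Ohm cancellation, which is the whole reason the Picard linearization was chosen in this particular form (freezing $\bm{B}^{n-1}$ in both the Lorentz force and in the definition of $\bm{j}^n_{n-1}$). Note that, unlike in the continuous or nonlinear discrete case, the dissipation term is $\|\bm{j}^n_{n-1}\|^2$ rather than $\|\bm{E}^n+\bm{u}^n\times\bm{B}^n\|^2$, but this is precisely the quantity appearing in \eqref{Picard-energy}, so no further work is needed.
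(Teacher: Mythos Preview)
Your argument is correct and follows essentially the same route as the paper: test \eqref{picard01}--\eqref{picard04} with $(\bm{u}^n,\bm{E}^n,\bm{B}^n)$, use the skew-symmetry of the trilinear term and the Ohm's-law cancellation to arrive at the discrete energy identity, then apply the implicit-Euler inequality $2(\bm{a}-\bm{b},\bm{a})\ge\|\bm{a}\|^2-\|\bm{b}\|^2$ and Young's inequality before telescoping. The paper presents the cancellation by adding \eqref{energypicard1} and \eqref{energypicard2} rather than substituting the Lorentz term as you do, and leaves the Young/summation step implicit, but the content is the same.
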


\begin{proof}
Take {$\bm{v}=\bm{u}^{n}$} in \eqref{picard01},
\begin{align}\label{energypicard1}
\left( \frac{ \bm{u}^{n}-\bm{u}^{n-1}}{k}, \bm{u}^{n} \right)
  + \frac{1}{Re} (\nabla \bm{u}^{n},\nabla \bm{u}^{n}) 
  = - S (\bm{j}_{n-1}^{n},
  \bm{u}^{n}\times \bm{B}^{n-1}) + (\bm{f}^{n}, \bm{u}^{n}).
\end{align}
Take {$\bm{C}=\bm{B}^{n}$} in \eqref{picard03} and
{$\bm{F}=\bm{E}^{n}$} in \eqref{picard02},
\begin{gather}\label{energypicard2}
\left( \frac{ \bm{B}^{n}-\bm{B}^{n-1} }{k}, \bm{B}^{n} \right)
  = - ( \nabla\times \bm{E}^{n}, \bm{B}^{n})
  = - Rm (\bm{j}_{n-1}^{n}, \bm{E}^{n}).
\end{gather}
Adding \eqref{energypicard1} and \eqref{energypicard2} to eliminate
the Lorentz force, and using Ohm's law:
\begin{dmath*}
  \left( \frac{ \bm{u}^{n}-\bm{u}^{n-1} }{k}, \bm{u}^{n} \right)
  + \frac{S}{Rm} \left( \frac{ \bm{B}^{n}-\bm{B}^{n-1} }{k}, \bm{B}^{n} \right)
  + \frac{1}{Re} (\nabla \bm{u}^{n}, \nabla \bm{u}^{n})
  + S (\bm{j}_{n-1}^{n},\bm{j}_{n-1}^{n}) 
  =( \bm{f}^{n}, \bm{u}^{n}).
\end{dmath*}
The desired results follow by combining the above two estimates with
the following simple inequality:
$$  
\left( \frac{\bm{u}^{n}-\bm{u}^{n-1}}{k}, \bm{u}^{n} \right)
\geq\frac{1}{2k}(\|\bm{u}^{n}\|^{2} - \|\bm{u}^{n-1}\|^{2} ).
$$
\end{proof}
For Newton linearization, energy estimates are not as neat as those
for Picard linearization.  Under some appropriate assumptions, for
sufficiently small {$k$}, we can establish the following energy estimates for
any solution of \eqref{newton1}-\eqref{newton4}:
$$
\mathcal{E}^{n} \leq e^{M_{1}T}(Ck(1+M_{2}k)^{-1}\sum_{i=1}^{n}\|\bm{f}^{i}\|_{-1}^{2}+\mathcal{E}^{0})
$$
where energy {$\mathcal{E}^{n}$} is defined as
$$
\mathcal{E}^{n}:=\frac{1}{2} \left(
\|\bm{u}^{n}\|^{2} + \frac{S}{Rm} \|\bm{B}^{n}\|^{2} \right) 
+ k(1+M_{2}k)^{-1} \|\bm{E}^{n}\|^{2} + \frac{k(1+M_{2}k)^{-1} }{2 Re} \|\nabla\bm{u}^{n}\|^{2}
$$
Here {$M_{1}$} and {$M_{2}$} are positive constants which only depend on
{$\bm{B}^{n-1}$}, {$\bm{u}^{n-1}$}, {$\bm{E}^{n-1}$} and {$k$}.

\subsection{Mixed formulations}
In this subsection, we formulate Algorithm \ref{alg:picard} for
Problem \ref{prob:picard} as a mixed problem and then
establish its well-posedness.  We use {$\bm{u}^{-}=\bm{u}^{n-1}$}
and {$\bm{B}^{-}=\bm{B}^{n-1}$} to denote known velocity and magnetic
field, either from the previous time step or iteration step.

\subsubsection{Two mixed formulations for Picard and Newton linearizations}
Given {$(\bm{u}^-,\bm{E}^-,\bm{B}^-)\in\bm{X}$}, we define
$$
\bm{d}(\bm{u}, \bm{v}):= {1 \over 2} [(\bm{u}^{-}\cdot
\nabla\bm{u},\bm{v})-(\bm{u}^{-}\cdot \nabla\bm{v}, \bm{u})] + \frac{1}{Re}
(\nabla \bm{u}, \nabla \bm{v}).
$$
For {$\bm{\xi}=(\bm{u,E,B})$}, {$\bm{\eta}=(\bm{v,F,C})\in \bm{X}$} and {$p,
q\in Q$}, define bilinear forms $\bm{a}_{0}(\cdot, \cdot)$, 
{$\bm{a}(\cdot, \cdot)$} on {$\bm{X}\times\bm{X}$}, and {$\bm{b}(\cdot, \cdot)$}
on {$\bm{X}\times Q$}, by
\begin{align*}
\bm{a}_0(\bm{\xi}, \bm{\eta})
& := k^{-1} (\bm{u}, \bm{v}) + \bm{d}(\bm{u}, \bm{v}) 
+ S (\bm{u}\times {\bm{B}} ^{-}, \bm{v}\times {\bm{B}} ^{-})\\
&
+ S (\bm{v}\times {\bm{B}} ^{-}, \bm{E})
+ S (\bm{u}\times {\bm{B}} ^{-}, \bm{F})
+ S (\bm{E}, \bm{F}) \\
& 
- \frac{S}{Rm} (\bm{B}, \nabla\times \bm{F})
+ \frac{S}{Rm} (\nabla \times \bm{E}, \bm{C}) 
+ \frac{S k^{-1}}{Rm} (\bm{B}, \bm{C}) 
\end{align*}
and 
$$
\bm{a}(\bm{\xi}, \bm{\eta}) :=\bm{a}_0(\bm{\xi}, \bm{\eta})
+ \frac{S}{Rm} (\nabla\cdot \bm{B}, \nabla\cdot \bm{C}),  
\quad \mbox{and} \quad 
\bm{b}(\bm{\eta}; q):= (\nabla\cdot \bm{v}, q). 
$$

For Newton linearization, we define bilinear form {$\bm{a}_{N,0}$} as:
\begin{align*}
\bm{a}_{N,0}(\bm{\xi}, \bm{\eta})
& :=  {k^{-1}} ( \bm{u}, \bm{v})
+ \bm{d}_N(\bm{u}, \bm{v}) 
- S (\bm{E} ^{-}\times \bm{B}, \bm{v}) \\
& 
- S (({\bm{u}} ^{-}\times {\bm{B}} ^{-})\times \bm{B}, \bm{v})
- S (({\bm{u}} ^{-}\times \bm{B}) \times {\bm{B}} ^{-}, \bm{v})  \\
&
+ S (\bm{E} + \bm{u}\times {\bm{B}} ^{-}, \bm{F} + \bm{v}\times {\bm{B}} ^{-})\
- \frac{S}{Rm} (\bm{B}, \nabla\times \bm{F}) \\
& 
+ \frac{S}{Rm} (\nabla\times \bm{E}, \bm{C})
+ \frac{S k^{-1}}{Rm} (\bm{B}, \bm{C})
+ S ({\bm{u}} ^{-}\times \bm{B}, \bm{F}).
\end{align*}
where
$$
\bm{d}_N(\bm{u}, \bm{v}):= (\bm{u}^{-}\cdot\nabla\bm{u},
\bm{v}) + (\bm{u}\cdot\nabla\bm{u}^{-},
\bm{v})+\frac{1}{Re} (\nabla\bm{u}, \nabla\bm{v}).
$$
And
$$
\bm{a}_{N}(\bm{\xi}, \bm{\eta}) := \bm{a}_{N,0}(\bm{\xi}, \bm{\eta}) 
+ \frac{S}{Rm} (\nabla\cdot \bm{B}, \nabla\cdot \bm{C}).
$$

\paragraph{Picard methods.}

We consider the following problem as a general model of Algorithm
\ref{alg:picard} for Problem \ref{prob:picard}.

\begin{problem}\label{prob:mixed0}  
Given {$\bm{h}=(\bm{f}, \bm{r}, \bm{l})\in [\bm{X}^{\bm{B}, 0}]^{\ast} $} satisfying\footnote{By Riesz representation theorem, $[\bm{V}^{d, 0}]^{\ast} \cong \bm{V}^{d,0}$ through the pairing \eqref{ldiv0}.}
\begin{equation}\label{ldiv0}
\langle \bm{l}, \bm{C} \rangle=(\bm{l}_{R}, \bm{C}),~\forall \bm{C}\in \bm{V}^{d}
\mbox{ ~~for some }
\bm{l}_{R}\in \bm{V}^{d,0}. 
\end{equation}
 and {$g\in Q^*$}, find {$(\bm{\xi}, p)\in
  \bm{X} \times Q$}, such that   
\begin{align}\label{brezzip01}
\bm{a}_0(\bm{\xi}, \bm{\eta})+ \bm{b}(\bm{\eta}, p)&=\langle \bm{h}, \bm{\eta} \rangle ,
\quad \forall~       
\bm{\eta} \in \bm{X}, \\\label{brezzip02}
\bm{b}(\bm{\xi},q)&=\langle {g}, q \rangle, \quad\forall q\in Q.
\end{align}
\end{problem}

We also give an equivalent problem, for which the equivalence and its well-posedness will be shown below.
\begin{problem}\label{prob:mixed}  
Given {$\bm{h}\in \bm{X}^{\ast}$} and {$g\in Q^{\ast}$}, find {$(\bm{\xi}, p)\in
\bm{X}\times Q$}, such that
\begin{align}\label{brezzip1}
\bm{a}(\bm{\xi}, \bm{\eta})+ \bm{b}(\bm{\eta}, p)&=\langle \bm{h}, \bm{\eta} \rangle ,\quad \forall~ \bm{\eta} \in \bm{X}, \\\label{brezzip2}
\bm{b}(\bm{\xi},q)&=\langle {g}, q \rangle, \quad\forall q\in Q.
\end{align}
\end{problem}

As an immediate observation of Problem \ref{prob:mixed}, we have Lemma 3.
\begin{lemma}\label{lem:graddiv0}
Assume {$\bm{h}$} satisfies \eqref{ldiv0}. The solution of Problem \ref{prob:mixed} satisfies 
$$
\nabla\cdot \bm{B}=0
$$
provided {$\nabla\cdot \bm{B}^{-}=0$}.
\end{lemma}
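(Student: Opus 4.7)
The plan is to test \eqref{brezzip1} with a carefully chosen $\bm{C}$-component and then exploit an $L^2$-orthogonal splitting of $\bm{B}$ against $\bm{V}^{d,0}$ to collapse everything to a sum of squares. First I would set $\bm{\eta}=(\bm{0},\bm{0},\bm{C})$ for an arbitrary $\bm{C}\in\bm{V}^d$ in \eqref{brezzip1}. The only terms in $\bm{a}_0$ that touch $\bm{C}$ are $\tfrac{Sk^{-1}}{Rm}(\bm{B},\bm{C})$ and $\tfrac{S}{Rm}(\nabla\times\bm{E},\bm{C})$, the augmentation contributes $\tfrac{S}{Rm}(\nabla\cdot\bm{B},\nabla\cdot\bm{C})$, and $\bm{b}(\bm{\eta},p)=0$. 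Using \eqref{ldiv0} on the right-hand side and dividing through by $S/Rm$ yields the reduced identity
$$
k^{-1}(\bm{B},\bm{C}) + (\nabla\times\bm{E},\bm{C}) + (\nabla\cdot\bm{B},\nabla\cdot\bm{C}) = \tfrac{Rm}{S}(\bm{l}_R,\bm{C}), \quad \forall\,\bm{C}\in\bm{V}^d.
$$

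Next I would invoke two structural facts that hold verbatim in the continuous and in the discrete cases of Figure~\ref{exact-sequence}: $\nabla\times\bm{E}\in\bm{V}^{d,0}$ for every $\bm{E}\in\bm{V}^c$ (from the de Rham sequence), and $\bm{l}_R\in\bm{V}^{d,0}$ by hypothesis. Then I would decompose $\bm{B}=\bm{B}^0+\bm{B}^\perp$, with $\bm{B}^0\in\bm{V}^{d,0}$ the $L^2$-projection of $\bm{B}$ onto $\bm{V}^{d,0}$, so that $\bm{B}^\perp\in\bm{V}^d$ is $L^2$-orthogonal to $\bm{V}^{d,0}$ and inherits $\nabla\cdot\bm{B}^\perp=\nabla\cdot\bm{B}$.

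The decisive step is to plug the admissible test $\bm{C}=\bm{B}^\perp$ into the reduced identity. The cross terms $(\nabla\times\bm{E},\bm{B}^\perp)$ and $(\bm{l}_R,\bm{B}^\perp)$ both vanish by orthogonality, while $(\bm{B},\bm{B}^\perp)=\|\bm{B}^\perp\|^2$ and $(\nabla\cdot\bm{B},\nabla\cdot\bm{B}^\perp)=\|\nabla\cdot\bm{B}\|^2$. Everything therefore collapses to
$$
k^{-1}\|\bm{B}^\perp\|^2 + \|\nabla\cdot\bm{B}\|^2 = 0,
$$
from which $\nabla\cdot\bm{B}=0$ (and incidentally $\bm{B}\in\bm{V}^{d,0}$) follows.

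The one point to verify carefully is that the $L^2$-orthogonal splitting is legitimate under the unified notation in \eqref{space-h}: in the continuous case it is the classical Helmholtz-type decomposition of $H_0(\mathrm{div};\Omega)$, and in the discrete case it is immediate since $\bm{V}^{d,0}_h$ is a finite-dimensional (hence closed) subspace of $\bm{V}^d_h$; the membership $\nabla\times\bm{E}\in\bm{V}^{d,0}$ is precisely the exactness of the discrete de Rham complex in Figure~\ref{exact-sequence}. The assumption $\nabla\cdot\bm{B}^-=0$ is used only indirectly, as the mechanism that makes the natural right-hand side $k^{-1}\bm{B}^-$ of the induction equation in Problem~\ref{prob:picard} fit the form \eqref{ldiv0} with $\bm{l}_R\in\bm{V}^{d,0}$; indeed $\bm{B}^-$ never appears in the $\bm{C}$-component of $\bm{a}$ itself.
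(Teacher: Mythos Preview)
Your argument is correct and reaches the same conclusion, but the key test function differs from the paper's. After isolating the $\bm{C}$-equation exactly as you do, the paper simply takes $\bm{C}$ to be the residual $\tfrac{Rm}{S}\bm{l}_R - k^{-1}\bm{B} - \nabla\times\bm{E}$ itself (written there in the specific form $\bm{l}_R - k^{-1}(\bm{B}-\bm{B}^-) - \nabla\times\bm{E}$, which uses $\nabla\cdot\bm{B}^-=0$). Since every summand except $-k^{-1}\bm{B}$ is divergence-free, $\nabla\cdot\bm{C} = -k^{-1}\nabla\cdot\bm{B}$, and the identity collapses immediately to $\|\bm{C}\|^2 = -k^{-1}\|\nabla\cdot\bm{B}\|^2$, forcing both sides to vanish. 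Your route replaces this one-line choice by an $L^2$-orthogonal splitting $\bm{B}=\bm{B}^0+\bm{B}^\perp$ and tests with $\bm{B}^\perp$; the cancellation then comes from orthogonality rather than from algebraic substitution. The paper's choice is a bit more direct (no decomposition and no need to argue that $\bm{V}^{d,0}$ is $L^2$-closed in the continuous case), while your approach makes more explicit the structural reason the argument works and, as you note, clarifies that the hypothesis $\nabla\cdot\bm{B}^-=0$ is really a condition on the data $\bm{l}_R$ rather than something used inside $\bm{a}$.
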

\begin{proof}
We also use {$\bm{V}^{d}$} to denote both continuous and discrete level.  By \eqref{brezzip1}, we get
$$
\left(\bm{l}_{R}-k^{-1}(\bm{B}-\bm{B}^{-})-\nabla\times
  \bm{E},\bm{C}\right)=(\nabla\cdot \bm{B}, \nabla\cdot\bm{C}\rangle ,\quad\forall \bm{C}\in \bm{V}^{d}.
$$
Now, take 
$
\bm{C}=\bm{l}_{R}-k^{-1}(\bm{B}-\bm{B}^{-})-\nabla\times \bm{E}
$, we have
$$
\|\bm{C}\|^2=(\nabla\cdot \bm{B}, \nabla\cdot\bm{C}\rangle =-k^{-1}\|\nabla\cdot \bm{B}\|^2. 
$$
This implies that
$$
\nabla\cdot\bm{B}=0. 
$$
\end{proof}

\begin{theorem}\label{thm:picard1}
Problem \ref{prob:mixed} is well-posed, if {$\bm{B}^{-} \in
  L^{\infty}(\Omega)$}, {$\bm{u}^{-}\in L^{3}(\Omega)$}, and {$k$} is sufficiently small:
$$
k \leq \frac{1}{8 S }\|B ^{-}\|_{0,\infty}^{-2}. 
$$
More precisely, for any {$\bm{h}\in \bm{X}^{\ast}$} and {$g\in Q^{\ast}$},  there
is a unique {$(\bm{\xi}, p)=(\bm{u},\bm{E}, \bm{B},p)\in \bm{X}\times Q$} that solves Problem
\ref{prob:mixed} and satisfies:
\begin{equation}  \label{stability}
\|(\bm{u},\bm{E},\bm{B})\|_{\bm{X}}+  \|p\|_{Q}\lesssim \|\bm{h}\|_{\bm{X}^*}+\|g\|_{Q^*}.
\end{equation}
\end{theorem}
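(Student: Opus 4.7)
The plan is to verify the three hypotheses of Brezzi's theorem for the saddle-point system \eqref{brezzip1}--\eqref{brezzip2}: continuity of $\bm{a}$ and $\bm{b}$ in the weighted norms on $\bm{X}$ and $Q$; the standard inf-sup for $\bm{b}$; and an inf-sup condition for $\bm{a}$ on the kernel
$$
K := \{\bm{\eta}\in\bm{X}: \bm{b}(\bm{\eta},q)=0\;\forall q\in Q\},
$$
which by the definitions of $\bm{V}^0$ and $\bm{V}_h^0$ coincides with $\bm{V}^0\times\bm{V}^c\times\bm{V}^d$ (resp.\ $\bm{V}_h^0\times\bm{V}_h^c\times\bm{V}_h^d$). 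The inf-sup for $\bm{b}$ is the assumed Stokes inf-sup \eqref{inf-sup} combined with the weighting in $\|\cdot\|_{0,k}$. Continuity of $\bm{a}$ reduces to bounding each bilinear piece in the weighted norms: the convection $\bm{d}(\bm{u},\bm{v})$ is controlled via $\bm{u}^-\in L^3(\Omega)$ together with the Sobolev embedding $H^1\hookrightarrow L^6$, the coupling terms are controlled by $\bm{B}^-\in L^\infty(\Omega)$, the curl-duality terms and the grad-div term fit directly into $\|\cdot\|_{\mathrm{curl},k}$ and $\|\cdot\|_{\mathrm{div},k}$, and the mass, diffusion and grad-div blocks are immediate.

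For the kernel inf-sup, testing with $\bm{\eta}=\bm{\xi}$ gives, after the skew-symmetric cancellations between $(\nabla\times\bm{E},\bm{C})$ and $(\bm{B},\nabla\times\bm{F})$ and inside $\bm{d}$, the identity
$$
\bm{a}(\bm{\xi},\bm{\xi}) = k^{-1}\|\bm{u}\|^{2} + \tfrac{1}{Re}\|\nabla\bm{u}\|^{2} + S\|\bm{E}+\bm{u}\times\bm{B}^{-}\|^{2} + \tfrac{Sk^{-1}}{Rm}\|\bm{B}\|^{2} + \tfrac{S}{Rm}\|\nabla\cdot\bm{B}\|^{2}.
$$
Expanding $\|\bm{E}\|^{2}\le 2\|\bm{E}+\bm{u}\times\bm{B}^{-}\|^{2}+2\|\bm{B}^{-}\|_{0,\infty}^{2}\|\bm{u}\|^{2}$ and absorbing $2S\|\bm{B}^{-}\|_{0,\infty}^{2}\|\bm{u}\|^{2}$ into $k^{-1}\|\bm{u}\|^{2}$ under the hypothesis $k\le 1/(8S\|\bm{B}^{-}\|_{0,\infty}^{2})$ controls every piece of $\|\bm{\xi}\|_{\bm{X}}^{2}$ except the curl part $k\|\nabla\times\bm{E}\|^{2}$. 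To recover this quantity I take the auxiliary test $\bm{\eta}_{1}=(\bm{0},\bm{0},k\nabla\times\bm{E})$, which lies in $\bm{X}$ (and in $K$) because the exact sequence of Figure~\ref{exact-sequence} gives $\nabla\times\bm{V}^{c}\subset\bm{V}^{d}$ (and likewise at the discrete level) with zero divergence and the correct normal trace. A direct computation reduces $\bm{a}(\bm{\xi},\bm{\eta}_{1})$ to
$$
\tfrac{kS}{Rm}\|\nabla\times\bm{E}\|^{2}+\tfrac{S}{Rm}(\bm{B},\nabla\times\bm{E}),
$$
and Young's inequality absorbs the cross term into $\tfrac{Sk^{-1}}{Rm}\|\bm{B}\|^{2}$ already available from $\bm{a}(\bm{\xi},\bm{\xi})$.

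Combining, I choose a universal $\delta>0$ small and set $\bm{\eta}=\bm{\xi}+\delta\bm{\eta}_{1}\in K$; then $\bm{a}(\bm{\xi},\bm{\eta})\gtrsim\|\bm{\xi}\|_{\bm{X}}^{2}$ while $\|\bm{\eta}\|_{\bm{X}}\lesssim\|\bm{\xi}\|_{\bm{X}}$ (since $\|\bm{\eta}_{1}\|_{\mathrm{div},k}^{2}=k\|\nabla\times\bm{E}\|^{2}\le\|\bm{E}\|_{\mathrm{curl},k}^{2}$). This yields the kernel inf-sup; the analogous "transposed" inf-sup follows from essentially the same test by exploiting the near-symmetry of $\bm{a}$ (the only skew contributions come from convection and the curl-duality pair, both of which disappear on the diagonal). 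Brezzi's theorem then gives existence, uniqueness, and the stability estimate \eqref{stability}. The identity $\nabla\cdot\bm{B}=0$ in the statement of Theorem~\ref{thm:picard} is independent and is supplied by Lemma~\ref{lem:graddiv0}, which also shows that Problems~\ref{prob:mixed0} and~\ref{prob:mixed} have the same solution whenever $\nabla\cdot\bm{B}^{-}=0$. The main obstacle is precisely the step of recovering $k\|\nabla\times\bm{E}\|^{2}$: the diagonal energy test is blind to it, and it is the exact-sequence compatibility $\nabla\times\bm{V}^{c}\subset\bm{V}^{d}$ of the chosen N\'ed\'elec/Raviart--Thomas pair that makes the auxiliary test $\bm{\eta}_{1}$ available; the smallness of $k$ is used only to absorb the magnetic coupling $S\|\bm{u}\times\bm{B}^{-}\|^{2}$ into the temporal mass term.
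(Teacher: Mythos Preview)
Your proposal is correct and follows essentially the same route as the paper: Brezzi's theorem with the three ingredients (continuity via $\bm{u}^{-}\in L^{3}$ and $\bm{B}^{-}\in L^{\infty}$, the Stokes inf-sup for $\bm{b}$, and the kernel inf-sup for $\bm{a}$ on $\bm{X}^{\bm{u},0}$), where the only nontrivial point is recovering $k\|\nabla\times\bm{E}\|^{2}$ by adding $k\nabla\times\bm{E}$ to the $\bm{C}$-component of the test function, using $\nabla\times\bm{V}^{c}\subset\bm{V}^{d}$. The paper simply packages your two-step test $\bm{\eta}=\bm{\xi}+\delta(\bm{0},\bm{0},k\nabla\times\bm{E})$ into the single choice $\bm{C}=\tfrac12(\bm{B}+k\nabla\times\bm{E})$, which makes the curl-duality cross terms cancel exactly rather than being absorbed via Young's inequality; apart from this cosmetic difference the arguments coincide.
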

The proof of the above theorem is given in the following
subsection. 

As a result of Lemma \ref{lem:graddiv0} and well-posedness of Problem \ref{prob:mixed}, we have theorem \ref{thm:mixed0}.
\begin{theorem}\label{thm:mixed0}
Problem \ref{prob:mixed0} is well-posed. 
\end{theorem}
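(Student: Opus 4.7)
The plan is to deduce the well-posedness of Problem \ref{prob:mixed0} from Theorem \ref{thm:picard1} by showing that, whenever the data satisfies \eqref{ldiv0}, Problems \ref{prob:mixed} and \ref{prob:mixed0} have the same solution set. Once this equivalence is in place, existence, uniqueness, and the stability bound \eqref{stability} all transfer directly.

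For the existence and stability direction, I would first use \eqref{ldiv0} to extend $\bm{h} \in [\bm{X}^{\bm{B},0}]^{\ast}$ to an element of $\bm{X}^{\ast}$ by letting the third component act on the whole of $\bm{V}^{d}$ via $\bm{C} \mapsto (\bm{l}_{R}, \bm{C})$. Applying Theorem \ref{thm:picard1} to this extended data produces a unique $(\bm{\xi}, p) \in \bm{X} \times Q$ solving Problem \ref{prob:mixed} and satisfying \eqref{stability}. Lemma \ref{lem:graddiv0} then forces $\nabla \cdot \bm{B} = 0$, which makes the extra grad-div term $\frac{S}{Rm}(\nabla\cdot \bm{B}, \nabla\cdot \bm{C})$ in $\bm{a}$ vanish identically. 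Hence $\bm{a}(\bm{\xi}, \bm{\eta}) = \bm{a}_{0}(\bm{\xi}, \bm{\eta})$ for every $\bm{\eta} \in \bm{X}$, and $(\bm{\xi}, p)$ also solves Problem \ref{prob:mixed0} with the same stability constant.

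For uniqueness, I would show that any solution of Problem \ref{prob:mixed0} automatically satisfies $\nabla \cdot \bm{B} = 0$ and therefore also solves Problem \ref{prob:mixed}, so that uniqueness is inherited from Theorem \ref{thm:picard1}. To produce the vanishing divergence from $\bm{a}_{0}$ alone, I would test \eqref{brezzip01} with $\bm{\eta} = (\bm{0}, \bm{0}, \bm{C})$ for arbitrary $\bm{C} \in \bm{V}^{d}$; the only surviving contributions, together with \eqref{ldiv0}, give the $L^{2}$-identity
\[
\frac{S}{Rm}\bigl(\nabla \times \bm{E} + k^{-1}\bm{B}\bigr) = \bm{l}_{R}.
\]
Taking divergence and using the de Rham identity $\nabla\cdot(\nabla\times \bm{E}) = 0$ from Figure~\ref{exact-sequence} (valid on both the continuous and discrete levels, and precisely where the N\'ed\'elec--Raviart-Thomas pairing is essential) together with $\nabla \cdot \bm{l}_{R} = 0$ (since $\bm{l}_{R} \in \bm{V}^{d,0}$) delivers $\nabla \cdot \bm{B} = 0$, as required.

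The main obstacle I anticipate is not a serious analytical one but a bookkeeping matter: one must identify $\bm{h}$ consistently as an element of both $[\bm{X}^{\bm{B},0}]^{\ast}$ and $\bm{X}^{\ast}$, and verify that the identity $\nabla \cdot \nabla \times = 0$ carries over exactly to the discrete spaces in Figure~\ref{exact-sequence} (which is built into the $H(\mathrm{curl})$--$H(\mathrm{div})$ finite element pairing chosen in this paper). Once these two points are handled, the reduction to Theorem \ref{thm:picard1} and Lemma \ref{lem:graddiv0} is immediate and the stability constant in \eqref{stability} is inherited without loss.
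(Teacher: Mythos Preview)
Your proposal is correct and follows essentially the same route as the paper: extend $\bm{h}$ to $\bm{X}^{\ast}$ via \eqref{ldiv0}, invoke Theorem~\ref{thm:picard1} and Lemma~\ref{lem:graddiv0} for existence, and for uniqueness show that any solution of Problem~\ref{prob:mixed0} is also a solution of Problem~\ref{prob:mixed}. In fact, your uniqueness argument is more explicit than the paper's, which simply asserts that a solution of Problem~\ref{prob:mixed0} solves Problem~\ref{prob:mixed}; your derivation of $\nabla\cdot\bm{B}=0$ directly from \eqref{brezzip01} (by testing with $(\bm{0},\bm{0},\bm{C})$ and using $\nabla\cdot\nabla\times=0$) fills in precisely the step the paper leaves implicit.
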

 
\begin{proof}
By \eqref{ldiv0}, we have {$\bm{h}\in \bm{X}^{\ast}$}. With such data {$\bm{h}$} and {$g$}, 
by Theorem
  \ref{thm:picard1}, Problem \ref{prob:mixed} has a unique solution
  which, thanks to Lemma \ref{lem:graddiv0}, is also a solution of
  Problem \ref{prob:mixed0}.  This proves the existence of solution
  for Problem \ref{prob:mixed0}. 
  
  On the other hand, any solution of Problem \ref{prob:mixed0} must be
  a solution of Problem \ref{prob:mixed} with the same data.  The solution to Problem \ref{prob:mixed} is unique, thus the
  solution to Problem \ref{prob:mixed0} must be unique.
\end{proof} 
 
By a similar argument, we get the following result on equivalence:

\begin{lemma}\label{lem:equivalence}
{$(\bm{u}^{n},\bm{E}^n,\bm{B}^n,p^{n})\in \bm{X}\times Q$} solves
  Problem \ref{prob:picard} if and only if
$$
(\bm{\xi},p)=(\bm{u}^{n},\bm{E}^n,\bm{B}^n,p^{n})
$$
solves both Problem \ref{prob:mixed} and Problem \ref{prob:mixed0} with
$$
(\bm{u}^-,\bm{E}^-,\bm{B}^-)  =
(\bm{u}^{n-1},\bm{E}^{n-1},\bm{B}^{n-1})
$$
and 
$$
\bm{h} = \left( \bm{f} + k^{-1}\bm{u}^{-}, \bm{0}, \frac{S k^{-1}}{Rm} \bm{B}^{-} \right), ~~ g=0. 
$$
\end{lemma}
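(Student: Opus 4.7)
The plan is to establish the equivalence by directly aligning the four equations of Problem~\ref{prob:picard} with the mixed problem, and then invoke Lemma~\ref{lem:graddiv0} to pass between Problem~\ref{prob:mixed} and Problem~\ref{prob:mixed0}. First I would unfold $\bm{j}_{n-1}^{n}=\bm{E}^n+\bm{u}^n\times\bm{B}^{n-1}$ in the momentum equation \eqref{picard01} and rewrite the Lorentz-type terms using the triple-product identity
$$
-S(\bm{E}^n\times\bm{B}^{n-1},\bm{v})=S(\bm{v}\times\bm{B}^{n-1},\bm{E}^n),\qquad -S((\bm{u}^n\times\bm{B}^{n-1})\times\bm{B}^{n-1},\bm{v})=S(\bm{u}^n\times\bm{B}^{n-1},\bm{v}\times\bm{B}^{n-1}).
$$
After moving the $k^{-1}\bm{u}^{n-1}$ term to the right-hand side, this equation is exactly the $\bm{v}$-component of $\bm{a}_0(\bm{\xi},\bm{\eta})+\bm{b}(\bm{\eta},p)$ tested against $\bm{\eta}=(\bm{v},\bm{0},\bm{0})$, with forcing $\bm{f}^n+k^{-1}\bm{u}^{n-1}$, matching the first slot of the prescribed $\bm{h}$.

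Second, I would multiply \eqref{picard02} by $S$ and \eqref{picard03} by $S/Rm$, which reproduces precisely the $\bm{F}$- and $\bm{C}$-components of $\bm{a}_0$ (giving $S(\bm{u}\times\bm{B}^-,\bm{F})+S(\bm{E},\bm{F})-\frac{S}{Rm}(\bm{B},\nabla\times\bm{F})$ and $\frac{Sk^{-1}}{Rm}(\bm{B},\bm{C})+\frac{S}{Rm}(\nabla\times\bm{E},\bm{C})$, respectively), with right-hand sides $0$ and $\frac{Sk^{-1}}{Rm}(\bm{B}^{n-1},\bm{C})$, again matching the second and third slots of $\bm{h}$. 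The constraint \eqref{picard04} is identical to \eqref{brezzip02} with $g=0$. Summing the three equations for general $\bm{\eta}=(\bm{v},\bm{F},\bm{C})\in\bm{X}$ yields \eqref{brezzip01} of Problem~\ref{prob:mixed0}, and conversely choosing $\bm{\eta}$ supported in one slot at a time decouples Problem~\ref{prob:mixed0} back into the three equations of Problem~\ref{prob:picard}. This establishes the equivalence with Problem~\ref{prob:mixed0}.

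Third, I would verify that the proposed $\bm{h}$ is admissible in the sense of \eqref{ldiv0}. Its third component is $\bm{l}_R=\frac{Sk^{-1}}{Rm}\bm{B}^{-}$, which lies in $\bm{V}^{d,0}$ because $\nabla\cdot\bm{B}^{-}=\nabla\cdot\bm{B}^{n-1}=0$ by the inductive preservation of Gauss's law established earlier in the paper (starting from \eqref{B0hdiv0}). Hence the hypothesis of Lemma~\ref{lem:graddiv0} is met. By that lemma, any solution of Problem~\ref{prob:mixed} with this data automatically satisfies $\nabla\cdot\bm{B}=0$, so the extra term $\frac{S}{Rm}(\nabla\cdot\bm{B},\nabla\cdot\bm{C})$ that distinguishes $\bm{a}$ from $\bm{a}_0$ vanishes identically, making Problem~\ref{prob:mixed} and Problem~\ref{prob:mixed0} equivalent on this data set. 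Combining the two equivalences gives the lemma.

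The arguments are essentially algebraic bookkeeping; the only subtle point is ensuring the admissibility condition \eqref{ldiv0} for $\bm{h}$, which I expect to be the main place where care is needed, since it relies on induction through the preservation-of-divergence property from prior time steps. The triple-product manipulations and the multiplicative scalings $S$ and $S/Rm$ must also be tracked consistently, but pose no real difficulty.
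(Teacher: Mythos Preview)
Your proposal is correct and matches the argument the paper has in mind (the paper omits the proof, writing only ``By a similar argument'' after Theorem~\ref{thm:mixed0}); the algebraic identification of \eqref{picard01}--\eqref{picard04} with the components of $\bm{a}_0$ and the use of Lemma~\ref{lem:graddiv0} together with $\nabla\cdot\bm{B}^-=0$ are exactly the intended ingredients. One small point to make explicit: to pass from Problem~\ref{prob:mixed0} back to Problem~\ref{prob:mixed} you also need $\nabla\cdot\bm{B}^n=0$ for the solution itself, which follows immediately from \eqref{picard03} (equivalently, the $\bm{C}$-row of $\bm{a}_0$) and $\nabla\cdot\bm{B}^-=0$---the same Faraday-law argument you already invoked for $\bm{B}^-$.
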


\bigskip

\paragraph{Newton methods.}

Similar to Picard linearization, we reformulate the Newton iteration scheme
into a mixed formulation. 
\begin{problem}\label{prob:newton}
Given {$\bm{h}\in {\bm{X}}^{\ast}$} satisfying \eqref{ldiv0} and {$g\in Q^{\ast}$},  find {$(\bm{\xi}, p)\in \bm{X}\times Q$}, such that for any {$(\bm{\eta}, q)\in \bm{X}\times Q$},
\begin{align}\label{picardvar1}
\bm{a}_{N}(\bm{\xi}, \bm{\eta})+ \bm{b}_{N}(\bm{\eta} , p)&=\langle \bm{h}, \bm{\eta} \rangle , \\\label{picardvar2}
\bm{b}(\bm{\xi}, q)&=\langle g, q \rangle.
\end{align}
\end{problem}

\begin{problem}\label{prob:newton00}
Given {$\bm{h}\in {[\bm{X}^{\bm{B},0}]}^{\ast}$} satisfying \eqref{ldiv0} and {$g\in Q^{\ast}$},  find {$(\bm{\xi}, p)\in \bm{X}\times Q$}, such that for any 
{$(\bm{\eta}, q)\in \bm{X} \times Q$},
\begin{align}\label{picardvar1}
\bm{a}_{N,0}(\bm{\xi}, \bm{\eta})+ \bm{b}_{N}(\bm{\eta} , p)&=\langle \bm{h}, \bm{\eta} \rangle , \\\label{picardvar2}
\bm{b}(\bm{\xi}, q)&=\langle g, q \rangle.
\end{align}
\end{problem}

Note that the argument in Lemma \ref{lem:equivalence} only involves linear equations, hence the  equivalence can be also established for Problem \ref{prob:newton} and Problem \ref{prob:newton00}:
\begin{lemma}
{$(\bm{u}^{n},\bm{E}^n,\bm{B}^n,p^{n})\in \bm{X}\times Q$} solves
  Problem \ref{prob:newton0} if and only if  
$$
(\bm{\xi},p)=(\bm{u}^{n},\bm{E}^n,\bm{B}^n,p^{n})
$$ 
solves Problem \ref{prob:newton} and Problem \ref{prob:newton00} with
$$
(\bm{u}^-,\bm{E}^-,\bm{B}^-)  =
(\bm{u}^{n-1},\bm{E}^{n-1},\bm{B}^{n-1}),
$$
$$
\bm{h} = (\bm{f} + k^{-1}\bm{u}^{-} + ( {\bm{u}}^{-}\cdot \nabla){\bm{u}} ^{-} - S \bm{E} ^{-}\times {\bm{B}} ^{-}
- 2 S ({\bm{u}} ^{-}\times {\bm{B}} ^{-})\times {\bm{B}} ^{-},
  S \bm{u}^{-}\times\bm{B}^{-}, \frac{S k^{-1}}{Rm} \bm{B}^{-} ),
$$
and
$$
	g=0.
$$
\end{lemma}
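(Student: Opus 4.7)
The plan is to mirror the proof of Lemma \ref{lem:equivalence} (the Picard case), since both directions of the equivalence ultimately come down to algebraic bookkeeping plus an application of a Lemma \ref{lem:graddiv0}–type argument to dispose of the grad-div penalty term. The statement is entirely linear in the unknowns $(\bm{\xi},p)$, so no fixed-point or nonlinearity arguments are needed.

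First I would verify the equivalence between Problem \ref{prob:newton0} and Problem \ref{prob:newton00}. This is a direct substitution: set $(\bm{u}^-,\bm{E}^-,\bm{B}^-) = (\bm{u}^{n-1},\bm{E}^{n-1},\bm{B}^{n-1})$ and expand $\bm{a}_{N,0}(\bm{\xi},\bm{\eta})$ against each test function component. Testing with $(\bm{v},\bm{0},\bm{0},0)$ should recover \eqref{newton1}; here I need to move the terms $(\bm{u}^{n-1}\cdot\nabla)\bm{u}^{n-1}$, $S\bm{E}^{n-1}\times\bm{B}^{n-1}$, and $2S(\bm{u}^{n-1}\times\bm{B}^{n-1})\times\bm{B}^{n-1}$ from the right-hand side $\bm{f}_N^n$ back onto the left-hand side and recognize that the combined expression matches the velocity-test part of $\bm{a}_{N,0}$, using the identity $(\bm{u}\times\bm{B}^-,\bm{v}\times\bm{B}^-) = -((\bm{u}\times\bm{B}^-)\times\bm{B}^-,\bm{v})$ and similarly for the cross-product rearrangements. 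Testing against $(\bm{0},\bm{F},\bm{0},0)$ recovers \eqref{newton2} after moving $\bm{\phi}_N^n = \bm{u}^{n-1}\times\bm{B}^{n-1}$ to the right; testing with $(\bm{0},\bm{0},\bm{C},0)$ recovers \eqref{newton3} (with the $\frac{S}{Rm}$ rescaling absorbed into the definition), and testing $\bm{b}(\bm{\xi},q)$ recovers \eqref{newton4}. The data $\bm{h}$ and $g=0$ in the statement are exactly what this bookkeeping produces, and the condition \eqref{ldiv0} is satisfied because $\bm{l}_R = \frac{Sk^{-1}}{Rm}\bm{B}^{-}$ which lies in $\bm{V}^{d,0}$ by the inductive assumption $\nabla\cdot\bm{B}^{-}=0$.

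Next I would pass from Problem \ref{prob:newton00} to Problem \ref{prob:newton}. The only difference between $\bm{a}_N$ and $\bm{a}_{N,0}$ is the grad-div term $\frac{S}{Rm}(\nabla\cdot\bm{B},\nabla\cdot\bm{C})$. To show any solution of Problem \ref{prob:newton00} solves Problem \ref{prob:newton}, it suffices to verify $\nabla\cdot\bm{B}=0$. This is precisely the argument of Lemma \ref{lem:graddiv0}: the induction-equation portion of $\bm{a}_{N,0}$ reads $(k^{-1}(\bm{B}-\bm{B}^-) + \nabla\times\bm{E}, \bm{C}) = \langle \bm{l},\bm{C}\rangle$, and since $\bm{l}_R \in \bm{V}^{d,0}$ the exact same test-function choice $\bm{C} = \bm{l}_R - k^{-1}(\bm{B}-\bm{B}^-) - \nabla\times\bm{E}$ forces $\nabla\cdot\bm{B} = 0$. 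Once this holds, the extra grad-div term contributes nothing and Problem \ref{prob:newton} reduces to Problem \ref{prob:newton00}. Conversely, any solution of Problem \ref{prob:newton} solves Problem \ref{prob:newton00} by the same reasoning.

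The main obstacle, such as it is, lies purely in the algebraic matching step above. In particular one must check that the two terms $S(\bm{u}^{n-1}\times\bm{B})\times\bm{B}^{n-1}$ and $S(\bm{u}\times\bm{B}^{n-1})\times\bm{B}^{n-1}$ in \eqref{newton1} are faithfully captured by the single symmetric bilinear form $S(\bm{E}+\bm{u}\times\bm{B}^-,\bm{F}+\bm{v}\times\bm{B}^-)$ (tested against $\bm{F}=\bm{0}$) together with the explicit $-S((\bm{u}^-\times\bm{B})\times\bm{B}^- + (\bm{u}\times\bm{B})\times\bm{B}^- \text{-type})$ terms in $\bm{a}_{N,0}$, and likewise that $\bm{\phi}_N^n$ pairs correctly with the term $S(\bm{u}^-\times\bm{B},\bm{F})$ on the electric-test line. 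This is a routine but care-intensive bookkeeping exercise; no new analytic content is required beyond what Lemma \ref{lem:equivalence} and Lemma \ref{lem:graddiv0} already provide, which justifies omitting the detailed proof.
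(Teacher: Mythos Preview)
Your proposal is correct and matches the paper's approach exactly: the paper does not give a detailed proof of this lemma either, noting only that ``the argument in Lemma \ref{lem:equivalence} only involves linear equations, hence the equivalence can be also established for Problem \ref{prob:newton} and Problem \ref{prob:newton00}.'' Your outline of the algebraic bookkeeping plus the Lemma \ref{lem:graddiv0}-type argument for the grad-div term is precisely what the paper has in mind.
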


In the next subsection, we prove the well-posedness of Problem \ref{prob:newton}.
\begin{theorem}\label{thm:newton1}
 Assume the known {${\bm{u}} ^{-}$}, {$\nabla \bm{u}^{-}$}, {$\bm{E} ^{-}$}
and {${\bm{B}} ^{-}$} belong to {$L^{\infty}$}. The Problem \ref{prob:newton} is well-posed for sufficiently small {$k$}.
\end{theorem}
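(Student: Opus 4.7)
}
The plan is to follow exactly the same Brezzi-theoretic strategy used for Theorem \ref{thm:picard1}, treating the additional Newton terms as small perturbations of the Picard bilinear form $\bm a_0$. That is, I will verify: (i) boundedness of $\bm a_N$ and $\bm b$ on $\bm X\times\bm X$ and $\bm X\times Q$ respectively in the weighted norms $\|\cdot\|_{\bm X}$ and $\|\cdot\|_Q$; (ii) the inf-sup condition for $\bm b$ on $\bm X\times Q$, which reduces to the Stokes inf-sup \eqref{inf-sup} on $\bm V\times Q$ since $\bm b$ only couples $\bm v$ and $q$; and (iii) coercivity of $\bm a_N$ on the kernel $\bm X^{\bm u,0}$. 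Once these three ingredients are in place, Brezzi's theorem yields existence, uniqueness, and the stability estimate analogous to \eqref{stability}, and Lemma \ref{lem:graddiv0} (whose proof uses only the structure of the induction block, which is identical in both Picard and Newton forms) gives $\nabla\cdot\bm B=0$, so Problem \ref{prob:newton00} is equivalent to Problem \ref{prob:newton}.

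The essential step, and the only one that differs meaningfully from the Picard case, is the coercivity (iii). I would write
\begin{equation*}
\bm a_N(\bm\xi,\bm\xi)=\bm a_0^{\mathrm{sym}}(\bm\xi,\bm\xi)+\bm r(\bm\xi,\bm\xi),
\end{equation*}
where $\bm a_0^{\mathrm{sym}}$ collects the terms already shown coercive in Theorem \ref{thm:picard1}, namely
\begin{equation*}
k^{-1}\|\bm u\|^2+\tfrac{1}{Re}\|\nabla\bm u\|^2+S\|\bm E+\bm u\times\bm B^-\|^2+\tfrac{Sk^{-1}}{Rm}\|\bm B\|^2+\tfrac{S}{Rm}\|\nabla\cdot\bm B\|^2,
\end{equation*}
plus the skew convection and skew curl/curl coupling that cancel when tested with $\bm\xi$. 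The remainder $\bm r(\bm\xi,\bm\xi)$ contains exactly the genuinely Newton terms:
\begin{equation*}
(\bm u\cdot\nabla\bm u^-,\bm u),\quad -S(\bm E^-\times\bm B,\bm u),\quad -S((\bm u^-\times\bm B^-)\times\bm B,\bm u),\quad -S((\bm u^-\times\bm B)\times\bm B^-,\bm u),\quad S(\bm u^-\times\bm B,\bm E+\bm u\times\bm B^-).
\end{equation*}
Using $\bm u^-,\nabla\bm u^-,\bm E^-,\bm B^-\in L^\infty$, each term admits a bound of the form $C(\bm u^-,\nabla\bm u^-,\bm E^-,\bm B^-)\|\bm u\|\,\|\bm B\|$, $\|\bm u\|\,\|\bm E\|$, or $\|\bm u\|^2$, with constants independent of $k$. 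By Young's inequality I can absorb each such product into the coercive quadratic terms $k^{-1}\|\bm u\|^2$, $k^{-1}\|\bm B\|^2$, $\|\bm E+\bm u\times\bm B^-\|^2$ at the cost of constants proportional to $k$. This produces an estimate
\begin{equation*}
\bm a_N(\bm\xi,\bm\xi)\ \ge\ (1-Ck)\,\bigl(k^{-1}\|\bm u\|^2+\tfrac{1}{Re}\|\nabla\bm u\|^2+\tfrac{Sk^{-1}}{Rm}\|\bm B\|^2+\tfrac{S}{Rm}\|\nabla\cdot\bm B\|^2+S\|\bm E\|^2\bigr),
\end{equation*}
on $\bm X^{\bm u,0}$, which for sufficiently small $k$ is bounded below by a positive multiple of $\|\bm\xi\|_{\bm X}^2$ in the kernel (using $\|\bm u\|_{1,k}^2$ collapses since the projection-of-divergence term vanishes on $\bm V^0_h$, and $\|\bm B\|_{\mathrm{div},k}^2$ is exactly reproduced).

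The main obstacle I anticipate is controlling the two terms $-S(\bm E^-\times\bm B,\bm u)$ and $S(\bm u^-\times\bm B,\bm E)$, which are truly asymmetric in $(\bm E,\bm B)$ and $\bm u$ and have no analogue in the Picard bilinear form. These are handled by pairing $\|\bm B\|$ with the $\tfrac{Sk^{-1}}{Rm}\|\bm B\|^2$ term via Young's inequality with weight of order $k$, so that the resulting perturbation is $O(k)\|\bm u\|^2$, absorbable into $k^{-1}\|\bm u\|^2$. The smallness condition on $k$ that emerges is of the form $k\le c\,(1+\|\bm u^-\|_{0,\infty}^2+\|\nabla\bm u^-\|_{0,\infty}+\|\bm E^-\|_{0,\infty}^2+\|\bm B^-\|_{0,\infty}^2)^{-1}$, which refines the Picard condition $k\le(8S\|\bm B^-\|_{0,\infty}^2)^{-1}$ of Theorem \ref{thm:picard1}. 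All other verifications (boundedness of $\bm a_N$ via Hölder plus the $L^\infty$ assumptions, and the already-known Stokes inf-sup for $\bm b$) are routine and identical to the Picard proof.
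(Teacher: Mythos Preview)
There is a genuine gap in your coercivity argument. You propose to test the bilinear form on the diagonal, i.e.\ to estimate $\bm a_N(\bm\xi,\bm\xi)$ from below. But with $\bm\eta=\bm\xi$ the curl/curl coupling
\[
-\tfrac{S}{Rm}(\bm B,\nabla\times\bm F)+\tfrac{S}{Rm}(\nabla\times\bm E,\bm C)
\]
is skew and cancels (as you note), and therefore $\bm a_N(\bm\xi,\bm\xi)$ contains no term involving $\nabla\times\bm E$ at all. Your claimed lower bound only reproduces $\|\bm E\|^2$, whereas $\|\bm\xi\|_{\bm X}^2$ contains $\|\bm E\|_{\mathrm{curl},k}^2=\|\bm E\|^2+k\|\nabla\times\bm E\|^2$. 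The inequality $\bm a_N(\bm\xi,\bm\xi)\gtrsim\|\bm\xi\|_{\bm X}^2$ is therefore false, and the Brezzi coercivity on the kernel cannot be obtained this way. This is not a Newton-specific issue: exactly the same obstruction appears for the Picard form $\bm a$, and Lemma~\ref{picard_lemma} does \emph{not} prove coercivity of $\bm a(\bm\xi,\bm\xi)$ as you seem to assume.

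The fix is the same off-diagonal test used in the Picard proof (and reused verbatim in the paper's Newton lemma): choose $\bm v=\bm u$, $\bm F=\bm E$, and $\bm C=\tfrac{1}{2}(\bm B+k\,\nabla\times\bm E)$. The additional piece $k\,\nabla\times\bm E$ in $\bm C$ turns the curl coupling into $\tfrac{Sk}{2Rm}\|\nabla\times\bm E\|^2$ while keeping $\|\bm\eta\|_{\bm X}\lesssim\|\bm\xi\|_{\bm X}$. With this choice of $\bm\eta$ your perturbation analysis of the extra Newton terms (bounding $(\bm u\cdot\nabla\bm u^-,\bm u)$, $S(\bm u^-\times\bm B,\bm E)$, $S(\bm E^-\times\bm B,\bm u)$, etc.\ via $L^\infty$ data and Young's inequality, and absorbing into $k^{-1}\|\bm u\|^2$ and $k^{-1}\|\bm B\|^2$ for $k$ small) goes through essentially as you outlined, and yields the inf-sup for $\bm a_N$ on $\bm X^{\bm u,0}$ with a smallness condition on $k$ of the type you describe.
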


\subsection{Proof of the well-posedness} 
This subsection is devoted to the proof of Theorem \ref{thm:picard1} and Theorem \ref{thm:newton1}. By Brezzi's theory \cite{Brezzi.F.1974a,Boffi.D;Brezzi.F;Fortin.M.2013a}, the proofs of these theorems are reduced to proving the following statements:
\begin{enumerate}
\item
{$\bm{a}(\cdot,\cdot)$} and {$ \bm{b}(\cdot, \cdot)$} are bounded;
\item
an inf-sup condition holds for {$\bm{b}(\cdot, \cdot)$};
\item
an inf-sup condition holds for {$\bm{a}(\cdot,\cdot)$} in the kernel of the operator induced by {$\bm{b}$}.
\end{enumerate}

\subsubsection{Picard methods}
Let us first analyze the Picard methods.

\begin{lemma}\label{boundedness}
Assume {$\bm{B} ^{-}\in L^3(\Omega)$}, {$\bm{u}^{-}\in L^{3}(\Omega)$}.
Then {$\bm{a}(\cdot, \cdot)$} and {$\bm{b}(\cdot, \cdot)$} are bounded linear operators:
$$
\bm{a}(\bm{\xi}, \bm{\eta}) \le C\|\bm{\xi}\|_{\bm{X}}\|\bm{\eta}\|_{\bm{X}},
\quad
\bm{b}(\bm{\eta}, q)\le C \|\bm{\eta}\|_{\bm{X}}\|q\|_{Q},
$$
where the constant {$C$} depends on {$\Omega$}, {$\|\bm{B} ^{-}\|_{0,3}$}, {$\|\bm{u}^{-}\|_{0,3}$}, but not on {$k$}.
\end{lemma}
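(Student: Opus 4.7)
The plan is to estimate each of the ten terms defining $\bm{a}(\bm{\xi},\bm{\eta})$ directly by Cauchy--Schwarz or H\"older's inequality, redistributing the powers of $k$ to match the four weighted components of $\|\cdot\|_{\bm{X}}$. The bound for $\bm{b}$ is essentially immediate: since $(\nabla\cdot\bm{v},q)=(\mathbb{P}\nabla\cdot\bm{v},q)$ for every $q\in Q$ (trivially when $\mathbb{P}$ is the identity, and by the definition of $P_h$ when $Q=Q_h$), Cauchy--Schwarz gives
$$
|\bm{b}(\bm{\eta},q)| \le \|\mathbb{P}\nabla\cdot\bm{v}\|\,\|q\|
= \bigl(k^{-1/2}\|\mathbb{P}\nabla\cdot\bm{v}\|\bigr)\bigl(k^{1/2}\|q\|\bigr)
\le \nmgrad{\bm{v}}\,\nmzero{q}.
$$

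For the linear contributions to $\bm{a}$, direct redistribution of $k$ handles each term: Cauchy--Schwarz yields $k^{-1}(\bm{u},\bm{v}) \le \nmgrad{\bm{u}}\nmgrad{\bm{v}}$, $\frac{Sk^{-1}}{Rm}(\bm{B},\bm{C}) \le \frac{S}{Rm}\nmdiv{\bm{B}}\nmdiv{\bm{C}}$, and $\frac{1}{Re}(\nabla\bm{u},\nabla\bm{v}) \le \frac{1}{Re}\nmgrad{\bm{u}}\nmgrad{\bm{v}}$; for the two curl--div couplings I split the $k$-weights as $\|\bm{B}\|\,\|\nabla\times\bm{F}\| = (k^{-1/2}\|\bm{B}\|)(k^{1/2}\|\nabla\times\bm{F}\|) \le \nmdiv{\bm{B}}\nmcurl{\bm{F}}$, and analogously for $(\nabla\times\bm{E},\bm{C})$; the $(\bm{E},\bm{F})$ term is bounded by $\nmcurl{\bm{E}}\nmcurl{\bm{F}}$ and $(\nabla\cdot\bm{B},\nabla\cdot\bm{C})$ by $\nmdiv{\bm{B}}\nmdiv{\bm{C}}$.

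The genuine work lies in the three nonlinear terms that depend on $\bm{u}^-$ and $\bm{B}^-$. For the convection piece in $\bm{d}(\cdot,\cdot)$, I apply H\"older with exponents $(3,2,6)$ to get $|(\bm{u}^-\cdot\nabla\bm{u},\bm{v})| \le \|\bm{u}^-\|_{0,3}\|\nabla\bm{u}\|\|\bm{v}\|_{0,6}$, and the Sobolev embedding $H_0^1(\Omega)\hookrightarrow L^6(\Omega)$ (valid in three dimensions and inherited by the conforming finite-element subspace) controls $\|\bm{v}\|_{0,6}$ by $C\|\nabla\bm{v}\|$; the skew-symmetric companion term is handled identically. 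For the quadratic cross-product $S(\bm{u}\times\bm{B}^-,\bm{v}\times\bm{B}^-)$, I pull $|\bm{B}^-|^2$ outside and apply H\"older with exponents $(6,6,3/2)$, exploiting $|\bm{B}^-|^2\in L^{3/2}(\Omega)$ since $\bm{B}^-\in L^3(\Omega)$, which combined with Sobolev embedding gives a bound of the form $CS\|\bm{B}^-\|_{0,3}^2\,\nmgrad{\bm{u}}\nmgrad{\bm{v}}$. The two mixed pairings $S(\bm{v}\times\bm{B}^-,\bm{E})$ and $S(\bm{u}\times\bm{B}^-,\bm{F})$ use H\"older with exponents $(6,3,2)$ and the same embedding to give $CS\|\bm{B}^-\|_{0,3}\,\nmgrad{\bm{v}}\nmcurl{\bm{E}}$ and its analogue.

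I expect the only (rather mild) obstacle to be bookkeeping: every estimate above must end up producing $\|\bm{\xi}\|_{\bm{X}}\|\bm{\eta}\|_{\bm{X}}$ with a constant depending on $\Omega$, $\|\bm{B}^-\|_{0,3}$, $\|\bm{u}^-\|_{0,3}$, $Re$, $Rm$ and $S$ but \emph{not} on $k$. Since the $k$-free bounds $\|\bm{v}\|_{0,6}\le C\|\nabla\bm{v}\|\le C\nmgrad{\bm{v}}$ and $\|\bm{F}\|\le\nmcurl{\bm{F}}$ are exactly what the weighted norms afford in the nonlinear terms, and the linear terms redistribute the $k$-weights exactly, no individual step is difficult, and the lemma follows by summing the resulting estimates.
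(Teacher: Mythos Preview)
Your proposal is correct and follows essentially the same approach as the paper: term-by-term estimation via H\"older with exponents $(3,2,6)$ or $(6,6,3/2)$ plus the Sobolev embedding $H_0^1\hookrightarrow L^6$ for the nonlinear pieces, and redistribution of the $k$-weights for the linear ones. Your treatment of $\bm{b}$ via $(\nabla\cdot\bm{v},q)=(\mathbb{P}\nabla\cdot\bm{v},q)$ is in fact slightly more careful than the paper's, which writes $(\frac{1}{\sqrt{k}}\nabla\cdot\bm{v},\sqrt{k}q)$ and silently uses the same projection identity; the paper also inserts a harmless ``without loss of generality $0<k\le1$'' to pass from $\|\cdot\|_1$ to $\|\cdot\|_{1,k}$, which your direct use of $\|\nabla\bm{u}\|\le\nmgrad{\bm{u}}$ avoids.
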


\begin{proof}
Without loss of generality, we assume that {$0 < k \leq 1$}. By the Sobolev embedding theorem,
$$
|( \bm{u}^{-}\cdot \nabla \bm{u},\bm{v} )|
\lesssim \|\bm{u}^{-} \|_{0,3}\|\bm{u}\|_{1}\|\bm{v}\|_{0,6}
\lesssim \|\bm{u}^{-}\|_{0,3}\|\bm{u}\|_{ 1,k}\|\bm{v}\|_{ 1,k}.
$$
The estimate of the term {$( (\bm{u}^{-}\cdot \nabla)\bm{v},\bm{u} )$} is similar. We also note
\begin{align*}
(\bm{u}\times {\bm{B}}^{-}, \bm{v}\times \bm{B}^{-})
\lesssim \|{\bm{B}}^{-}\|_{0,3}^{2}\|\bm{u}\|_{0,6}\|\bm{v}\|_{0,6}
\lesssim \|{\bm{B}}^{-}\|_{0,3}^{2}\|\bm{u}\|_{1,k}\|\bm{v}\|_{1,k}.
\end{align*}
Thus,
$$
d(\bm{u},\bm{v}) 
+ S (\bm{u}\times {\bm{B}}^{-}, \bm{v}\times {\bm{B}}
^{-}) \lesssim \|\bm{u}\|_{ 1,k}\|\bm{v}\|_{ 1,k}.
$$
In addition, we have the following estimates:
\begin{align*}
& (\nabla \times \bm{E}, \bm{B})
= \left( \sqrt{k}\nabla\times \bm{E}, \frac{1}{\sqrt{k}} \bm{B} \right)
\lesssim
\|\bm{E}\|_{ \mathrm{curl}, k}\|\bm{B}\|_{ \mathrm{div}, k}, \\
& (\bm{u}\times {\bm{B}} ^{-}, \bm{F})
\lesssim \|{\bm{B}} ^{-}\|_{0,3}\|\bm{u}\|_{0,6}\|\bm{F}\| 
\lesssim \|{\bm{B}} ^{-}\|_{0,3}\|\bm{u}\|_{ 1,k}\|\bm{F}\|_{ \mathrm{curl}, k}, \\
& ( \bm{E}, \bm{F})\le \|\bm{E}\|_{ \mathrm{curl}, k}\|\bm{F}\|_{ \mathrm{curl}, k}, \\
& \left( \frac{1}{\sqrt{k}}\bm{u}, \frac{1}{\sqrt{k}}\bm{v} \right) \le \|\bm{u}\|_{ 1,k}\|\bm{v}\|_{ 1,k}, \\
& \left( \frac{1}{\sqrt{k}}\bm{B}, \frac{1}{\sqrt{k}}\bm{C} \right) \le \|\bm{B}\|_{ \mathrm{div}, k}\|\bm{C}\|_{ \mathrm{div}, k}, \\
& (\nabla\cdot \bm{v}, q) = \left( \frac{1}{\sqrt{k}}\nabla\cdot \bm{v}, \sqrt{k} q \right) \lesssim  \|\bm{v}\|_{ 1,k}\|q\|_{0, k}.
\end{align*}
Therefore, the conclusion holds.
\end{proof}

We now proceed to proving the inf-sup condition of {$\bm{b}(\cdot, \cdot)$}.
\begin{lemma}\label{infsupb}
{$\bm{b}(\cdot,\cdot)$} satisfies inf-sup condition, that is, there exists constant {$\alpha>0$}, such that
$$
\inf_{q\in Q}  \sup_{\bm{\eta}\in \bm{X}}\frac{\bm{b}(\bm{\eta}, q)}{\|\bm{\eta}\|_{\bm{X}} \|q\|_{Q}  }
\geq \alpha>0.
$$
\end{lemma}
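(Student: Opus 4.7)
The plan is to reduce the desired inf-sup condition to the classical Stokes inf-sup condition \eqref{inf-sup}, testing against vectors of the form $\bm{\eta} = (\bm{v}, \bm{0}, \bm{0}) \in \bm{X}$. For any $q \in Q$, I would first invoke \eqref{inf-sup} (which holds at both the continuous and discrete levels under our hypotheses) to produce a velocity field $\bm{v}$ in the appropriate space satisfying
$$\mathbb{P}\,\nabla\cdot\bm{v} = q, \qquad \|\nabla\bm{v}\| \lesssim \|q\|,$$
where $\mathbb{P}$ denotes the identity in the continuous case and the $L^2$-projection $P_h$ in the discrete case. Since $q \in Q$ (respectively $Q_h$), the $L^2$-orthogonality of $\mathbb{P}$ gives in either case $(\nabla\cdot\bm{v}, q) = (\mathbb{P}\,\nabla\cdot\bm{v}, q) = \|q\|^2$.

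Next I would bound $\|\bm{\eta}\|_{\bm{X}} = \|\bm{v}\|_{1,k}$. Applying the Poincar\'e inequality $\|\bm{v}\| \lesssim \|\nabla\bm{v}\| \lesssim \|q\|$ together with $\|\mathbb{P}\,\nabla\cdot\bm{v}\| = \|q\|$ directly yields
$$\|\bm{v}\|_{1,k}^{2} = k^{-1}\|\bm{v}\|^{2} + \|\nabla\bm{v}\|^{2} + k^{-1}\|\mathbb{P}\,\nabla\cdot\bm{v}\|^{2} \lesssim (1+k^{-1})\|q\|^{2}.$$
Comparing with $\|q\|_{Q} = \sqrt{k}\,\|q\|$ then gives
$$\frac{\bm{b}(\bm{\eta}, q)}{\|\bm{\eta}\|_{\bm{X}}\,\|q\|_{Q}} \gtrsim \frac{\|q\|^{2}}{\sqrt{1+k^{-1}}\,\|q\| \cdot \sqrt{k}\,\|q\|} = \frac{1}{\sqrt{1+k}},$$
which is bounded away from zero uniformly for $k$ in any bounded time-stepping interval, giving the required constant $\alpha > 0$.

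The only real subtlety here is the careful matching of the $k$-weights: the seemingly awkward term $k^{-1}\|\mathbb{P}\,\nabla\cdot\bm{v}\|^{2}$ built into $\|\cdot\|_{1,k}$ is there precisely so that it balances the $\sqrt{k}$ weighting inside $\|\cdot\|_{0,k}$, thereby producing a $k$-independent inf-sup constant. Everything else is routine bookkeeping with Poincar\'e and \eqref{inf-sup}; no new cancellation or construction is needed.
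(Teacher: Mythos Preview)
Your proof is correct and follows essentially the same route as the paper: reduce to the classical Stokes inf-sup condition for the pair $(\bm{V},Q)$ and then verify that the $k$-weights in $\|\cdot\|_{1,k}$ and $\|\cdot\|_{0,k}$ combine to give a constant independent of $k$. The paper phrases this as the single norm inequality $\|\bm{v}\|_{1,k}\|q\|_{0,k}\lesssim\|\bm{v}\|_{1}\|q\|_{0}$ (using $\|\mathbb{P}\nabla\cdot\bm{v}\|\le\|\nabla\bm{v}\|$ and implicitly $k\lesssim 1$), whereas you construct an explicit $\bm{v}$ with $\mathbb{P}\nabla\cdot\bm{v}=q$ via the surjectivity equivalent of \eqref{inf-sup}; these are two ways of writing the same estimate, and both yield a lower bound of order $(1+k)^{-1/2}$.
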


\begin{proof}
  The inf-sup condition of velocity and pressure of classical Sobolev
  spaces and stable finite element pairs is well-known 
  \cite{Boffi.D;Brezzi.F;Fortin.M.2013a}: there exists
  {$\gamma_{0}>0$}, such that 
$$
\inf_{0\neq q\in Q}\sup_{0\neq \bm{v}\in \bm{V}}\frac{(\nabla\cdot
  \bm{v},q)}{\|\bm{v}\|_{1}\|q\|_{0}}\geq \gamma_{0}>0.
$$
By definition of new norms,
$$
\|\bm{v}\|_{ 1, k}\|q\|_{0, k} \lesssim \|\bm{v}\|_{1}\|q\|_{0},
$$
since {$\nabla \cdot \bm{v}$} is a part of {$\nabla \bm{v}$}.

This implies that inf-sup condition of velocity-pressure holds: there
exists {$\alpha>0$}, such that
$$
\inf_{0\neq q\in Q}\sup_{0\neq \bm{v}\in \bm{V}}\frac{(\nabla\cdot
  \bm{v},q)}{\|\bm{v}\|_{ 1, k}\|q\|_{0, k}} \geq \alpha>0.
$$
\end{proof}

Next, we establish an inf-sup condition of {$\bm{a}(\cdot,\cdot)$}
in {$\bm{X}^{u,0}$}.

\begin{lemma}\label{picard_lemma}
Assume {${\bm{B}} ^{-}\in L^{\infty}$}, and {$k \leq \frac{1}{8 S }\|\bm{B} ^{-}\|_{0,\infty}^{-2}$}. The inf-sup conditions hold:
$$
\inf_{\bm{0} \neq \bm{\xi}\in \bm{X}^{\bm{u}, 0}} \sup_{\bm{0} \neq \bm{\eta}\in \bm{X}^{\bm{u}, 0 }}\frac{\bm{a}(\bm{\xi}, \bm{\eta}) }{\|\bm{\xi}\|_{\bm{X}} \|\bm{\eta}\|_{\bm{X}}}\geq \alpha>0,
$$
$$
\inf_{\bm{0} \neq \bm{\eta}\in \bm{X}^{\bm{u}, 0 }} \sup_{ \bm{0} \neq \bm{\xi} \in \bm{X}^{\bm{u}, 0 }}\frac{\bm{a}(\bm{\xi}, \bm{\eta}) }{\|\bm{\xi}\|_{\bm{X}} \|\bm{\eta}\|_{\bm{X}}}\geq \alpha>0.
$$
\end{lemma}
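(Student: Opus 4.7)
The plan is to verify both inf-sup conditions by a single diagonal test augmented with one auxiliary perturbation that supplies the missing $H(\mathrm{curl})$ control of $\bm{E}$ (resp. $\bm{F}$). Evaluating $\bm{a}(\bm{\xi}, \bm{\xi})$ for $\bm{\xi} = (\bm{u}, \bm{E}, \bm{B})$, the skew-symmetric part of $\bm{d}$ vanishes, the two coupling terms $\pm\frac{S}{Rm}(\bm{B}, \nabla\times\bm{E})$ cancel, and the remaining $\bm{u}$--$\bm{E}$ cross terms assemble with their $L^2$ companions into a perfect square, giving
$$
\bm{a}(\bm{\xi}, \bm{\xi}) = k^{-1}\|\bm{u}\|^2 + \frac{1}{Re}\|\nabla\bm{u}\|^2 + S\|\bm{E} + \bm{u}\times\bm{B}^{-}\|^2 + \frac{Sk^{-1}}{Rm}\|\bm{B}\|^2 + \frac{S}{Rm}\|\nabla\cdot\bm{B}\|^2.
$$
Expanding the square, bounding $\|\bm{u}\times\bm{B}^{-}\|^2 \leq \|\bm{B}^{-}\|_{0,\infty}^2\|\bm{u}\|^2$, and using the hypothesis $k \leq (8S\|\bm{B}^{-}\|_{0,\infty}^2)^{-1}$ to absorb the resulting $-S\|\bm{B}^{-}\|_{0,\infty}^2\|\bm{u}\|^2$ into the $k^{-1}\|\bm{u}\|^2$ term, I would obtain
$$
\bm{a}(\bm{\xi}, \bm{\xi}) \gtrsim k^{-1}\|\bm{u}\|^2 + \|\nabla\bm{u}\|^2 + \|\bm{E}\|^2 + k^{-1}\|\bm{B}\|^2 + \|\nabla\cdot\bm{B}\|^2.
$$
This dominates all of $\|\bm{\xi}\|_{\bm{X}}^2$ except the $k\|\nabla\times\bm{E}\|^2$ component of $\nmcurl{\bm{E}}^2$.

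To recover that remaining piece I would introduce the auxiliary test $\bm{\eta}_* := (\bm{0}, \bm{0}, k\,\nabla\times\bm{E})$. The de~Rham exactness illustrated in Figure~\ref{exact-sequence} ensures $\nabla\times\bm{E}\in \bm{V}^{d,0}$ at both continuous and discrete levels, so $\bm{\eta}_*\in \bm{X}^{\bm{u},0}$; moreover, since $\nabla\cdot(\nabla\times\bm{E}) = 0$, the $\nmdiv{\cdot}$ norm collapses to $\|\bm{\eta}_*\|_{\bm{X}} = \sqrt{k}\,\|\nabla\times\bm{E}\|$. A direct computation then yields
$$
\bm{a}(\bm{\xi}, \bm{\eta}_*) = \frac{Sk}{Rm}\|\nabla\times\bm{E}\|^2 + \frac{S}{Rm}(\bm{B}, \nabla\times\bm{E}),
$$
and Young's inequality on the mixed term absorbs half of the curl contribution while leaving only a $k^{-1}\|\bm{B}\|^2$ remainder already controlled by the diagonal estimate. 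Choosing $\bm{\eta} := \bm{\xi} + \lambda\,\bm{\eta}_*$ with a sufficiently small universal $\lambda>0$ yields $\bm{a}(\bm{\xi}, \bm{\eta}) \gtrsim \|\bm{\xi}\|_{\bm{X}}^2$ together with $\|\bm{\eta}\|_{\bm{X}} \lesssim \|\bm{\xi}\|_{\bm{X}}$, establishing the first inf-sup inequality.

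The second (adjoint) inequality is handled symmetrically: given $\bm{\eta} = (\bm{v}, \bm{F}, \bm{C})$, I would test with $\bm{\xi} := \bm{\eta} + \mu(\bm{0}, \bm{0}, -k\,\nabla\times\bm{F})$. The diagonal $\bm{a}(\bm{\eta}, \bm{\eta})$ is treated exactly as above, and the perturbation now produces $\frac{Sk}{Rm}\|\nabla\times\bm{F}\|^2$ out of the $-\frac{S}{Rm}(\bm{B}, \nabla\times\bm{F})$ term in $\bm{a}_0$, with the leftover coupling to $\bm{C}$ again absorbed by Young's inequality. The main obstacle is the coercivity step: the $\bm{u}$--$\bm{B}^{-}$ cross coupling is only manageable once $k$ is small, which is exactly the role of the explicit threshold $k \leq (8S\|\bm{B}^{-}\|_{0,\infty}^2)^{-1}$; beyond that, every remaining manipulation is routine Young's and Cauchy--Schwarz applied on top of the de~Rham exactness that makes the auxiliary test admissible.
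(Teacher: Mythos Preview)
Your argument is correct and is essentially the paper's own proof. The paper also tests with $\bm{v}=\bm{u}$, $\bm{F}=\bm{E}$, and augments the $\bm{C}$-component by $k\nabla\times\bm{E}$; the only cosmetic difference is that instead of your generic combination $\bm{\eta}=\bm{\xi}+\lambda(\bm{0},\bm{0},k\nabla\times\bm{E})$ with Young's inequality on the cross term $(\bm{B},\nabla\times\bm{E})$, the paper fixes the coefficient $\tfrac12$ up front by taking $\bm{C}=\tfrac12(\bm{B}+k\nabla\times\bm{E})$, which makes that cross term cancel exactly against $-\frac{S}{Rm}(\bm{B},\nabla\times\bm{E})$ and yields the coercivity bound in one line.
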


\begin{proof}
Take {$\bm{v}=\bm{u}$}, {$\bm{F}=\bm{E}$}, {$\bm{C}=\frac{1}{2}(\bm{B}+k \nabla\times \bm{E})$}:
\begin{align*}
 \bm{a}(\bm{\xi}, \bm{\eta}) 
 = & {k^{-1}}( \bm{u}, \bm{u})
 + \bm{d}(\bm{u}, \bm{u})
+ S \|\bm{E}+\bm{u}\times {\bm{B}} ^{-}\|^{2}
- \frac{S}{Rm} (\bm{B}, \nabla\times \bm{E}) 
+ \frac{Sk}{2 Rm} \|\nabla\times \bm{E}\|^{2} \\
&  + \frac{S}{2 Rm} (\nabla\times \bm{E}, \bm{B})
+ \frac{S}{2 Rm} (\nabla\times \bm{E}, \bm{B})
+ \frac{S k^{-1} }{2 Rm} (\bm{B}, \bm{B})
+ \frac{S}{2 Rm} \|\nabla\cdot \bm{B}\|^{2}\\
= & {k^{-1}} \| \bm{u}\|^{2}
+ \bm{d}(\bm{u}, \bm{u})
+ S \|\bm{E} + \bm{u}\times {\bm{B}}^{-}\|^{2}
+ \frac{Sk}{2Rm} \|\nabla\times \bm{E}\|^{2} \\
& + \frac{S k^{-1} }{2 Rm} \|\bm{B}\|^{2}
+ \frac{S}{2Rm} \|\nabla\cdot \bm{B}\|^{2}.
\end{align*}
Note the fact that
$$
\|\bm{E}+\bm{u}\times {\bm{B}} ^{-}\|^{2}\geq \frac{1}{2}\|\bm{E}\|^{2}-\|\bm{u}\times {\bm{B}} ^{-}\|^{2}.
$$
For {$k \leq \frac{1}{8 S }\|\bm{B} ^{-}\|_{\infty}^{-2}$},
$$
\frac{1}{2} k^{-1} \|\bm{u}\|^{2}\geq 4 S \|\bm{B}^{-}\|_{\infty}^{2}\|\bm{u}\|^{2}
\geq S \|\bm{u}\times \bm{B}^{-}\|^{2},
$$  
and we have
$$
{k^{-1}} \| \bm{u}\|^{2} + S \|\bm{E} + \bm{u}\times {\bm{B}} ^{-}\|^{2} 
\geq \min \left \{{1\over 2 }, {1\over 2} S \right \}({k^{-1}}\|\bm{u}\|^{2}+\|\bm{E}\|^{2}).
$$

On the other hand, there exists positive $\beta$ such that
$$
\bm{d}(\bm{u}, \bm{u})\geq \beta \lvert \bm{u} \rvert_{1}^{2},
\quad
(\nabla\cdot \bm{u}, q)=0 \quad \forall~ q,
$$
for {$\bm{\xi}\in \bm{X}^{u,0} $}.

This implies for small {$k$}, there exist constants {$\alpha$}, {$C>0$}, such that for any {$\bm{\xi}\in \bm{X}^{0, u}$}, there exists an {$\bm{\eta}\in \bm{X}^{0, u}$} satisfying
$$
\bm{a}(\bm{\xi}, \bm{\eta}) \geq \alpha \|\bm{\xi}\|_{\bm{X}}^{2},
\quad
\|\bm{\eta}\|_{\bm{X}}\le C \|\bm{\xi}\|_{\bm{X}},
$$
where {$\alpha$} and {$C$} depend on the domain {$\Omega$}, {${\bm{B}} ^{-}$} and {${\bm{u}} ^{-}$}, but not on time step size {$k$}.

The other inequality can be proved in the same way.
\end{proof}

\bigskip

Combining Lemmas \ref{boundedness}, \ref{infsupb} and
\ref{picard_lemma}, we complete the proof of Theorem \ref{thm:picard1}.

\begin{remark}
As a remark, we have assumed that
{$\bm{B}^{-}\in L^{\infty}$} in the analysis. Such an
assumption is reasonable on the discrete level. By an inverse estimate, one gets
$$
\|{\bm{B}}^{-}\|_{0,\infty}\lesssim
h^{-\frac{3}{2}}\|{\bm{B}}^{-}\|, ~
\| {\bm{u}}^{-}  \|_{0, 3} \lesssim
h^{-\frac{1}{2}} \| {\bm{u}^{-}} \|.
$$
According to the energy estimate \eqref{Picard-energy} , we know that
the {$\Vert \bm{B}^{-} \Vert$} and $\| {\bm{u}^{-}}\|$ are both uniformly bounded.
\end{remark}

\bigskip
\subsubsection{Newton methods}
Next, we prove Theorem \ref{thm:newton1} for the Newton method. 
The proof is also based on Brezzi's theory. We begin with the proof 
of the boundedness of {$\bm{a}_{N}(\cdot,\cdot)$}, which is 
quite similar to that of Picard linearization. 
\begin{lemma}
Assume {$k (\bm{E}^{-} + \bm{u}^{-}\times \bm{B}^{-})\in L^{3}(\Omega)$}, {$k^{\frac{1}{2}}\bm{u}^{-}\in L^{\infty}(\Omega)$}, {$\bm{B}^{-}\in L^{3}(\Omega)$}, {$k \nabla\bm{u}^{-}\in L^{2}(\Omega)$}. It follows that {$\bm{a}_{N}(\cdot,\cdot)$} and {$\bm{b}(\cdot, \cdot)$} are bounded in {$\bm{X}\times \bm{X}$} and {$\bm{X}\times Q$} with weighted norms.
\end{lemma}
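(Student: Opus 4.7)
The plan is to bound each summand in $\bm{a}_N(\cdot,\cdot)$ and $\bm{b}(\cdot,\cdot)$ separately against the weighted norms, using only Cauchy--Schwarz, H\"older's inequality, the Sobolev embedding $H^1_0(\Omega)\hookrightarrow L^6(\Omega)\cap L^3(\Omega)$ (valid in three dimensions), and the explicit definitions of $\nmgrad{\cdot}$, $\nmcurl{\cdot}$, $\nmdiv{\cdot}$, $\nmzero{\cdot}$. The overall strategy parallels Lemma~\ref{boundedness}.

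For the terms that already appear in the Picard form $\bm{a}(\cdot,\cdot)$---the identity and Laplacian pieces, the $\tfrac{S k^{-1}}{Rm}$ mass term and the $\tfrac{S}{Rm}$ graddiv term, the curl--div couplings (rewritten as $(\sqrt{k}\nabla\times\bm{E})(k^{-1/2}\bm{C})$ so that $\nmcurl{\bm{E}}$ pairs with $\nmdiv{\bm{C}}$), the leading convection $(\bm{u}^-\cdot\nabla\bm{u},\bm{v})$, the expanded quadratic form $S(\bm{E}+\bm{u}\times\bm{B}^-,\bm{F}+\bm{v}\times\bm{B}^-)$, the Ohm transpose $S(\bm{u}^-\times\bm{B},\bm{F})$, and $\bm{b}$ itself---the argument of Lemma~\ref{boundedness} transfers verbatim, with the controlling constants being $\|\bm{B}^-\|_{0,3}$ and $\|\bm{u}^-\|_{0,3}$, the latter supplied by the hypothesis $k^{1/2}\bm{u}^-\in L^\infty$ together with the finite measure of $\Omega$.

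The two genuinely new ingredients require care. For the Newton convection cross term $(\bm{u}\cdot\nabla\bm{u}^-,\bm{v})$ inside $\bm{d}_N$, a direct H\"older estimate using the hypothesis $k\nabla\bm{u}^-\in L^2$ produces an unwanted $k^{-1}$ factor; I would instead integrate by parts to rewrite it as
\[
(\bm{u}\cdot\nabla\bm{u}^-,\bm{v})=-(\nabla\cdot\bm{u},\bm{u}^-\cdot\bm{v})-(\bm{u}\cdot\nabla\bm{v},\bm{u}^-),
\]
and estimate both pieces by H\"older in the form $L^2\cdot L^\infty\cdot L^2$, invoking $k^{1/2}\bm{u}^-\in L^\infty$; the factor $k^{-1/2}$ incurred from writing $\|\bm{u}^-\|_{0,\infty}=k^{-1/2}\|k^{1/2}\bm{u}^-\|_{0,\infty}$ is cancelled by the $k^{1/2}$ gained via $\|\bm{v}\|\le k^{1/2}\nmgrad{\bm{v}}$ or $\|\bm{u}\|\le k^{1/2}\nmgrad{\bm{u}}$. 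For the Newton Lorentz linearisations $-S(\bm{E}^-\times\bm{B},\bm{v})$, $-S((\bm{u}^-\times\bm{B}^-)\times\bm{B},\bm{v})$, and $-S((\bm{u}^-\times\bm{B})\times\bm{B}^-,\bm{v})$, I would apply H\"older in the form $L^3\cdot L^2\cdot L^6$, using the scaled hypothesis $k(\bm{E}^-+\bm{u}^-\times\bm{B}^-)\in L^3$ (together with $\bm{B}^-\in L^3$ and $k^{1/2}\bm{u}^-\in L^\infty$) for the data factor, and then substituting $\|\bm{B}\|\le k^{1/2}\nmdiv{\bm{B}}$ and $\|\bm{v}\|_{0,6}\lesssim\nmgrad{\bm{v}}$.

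The main obstacle is the careful bookkeeping of $k$-powers: in each summand one must choose the H\"older split and the weighted-norm substitution so that the powers of $k$ introduced by expressing $L^2$ factors through $\nmdiv{\cdot}$ or $\nmgrad{\cdot}$ are precisely matched by the $k$-scalings built into the hypotheses on the previous iterate $(\bm{u}^-,\bm{E}^-,\bm{B}^-)$. The step where a naive H\"older estimate fails outright is the Newton convection cross term $(\bm{u}\cdot\nabla\bm{u}^-,\bm{v})$, so the integration-by-parts rewrite above is the most delicate part of the proof; every other term is either already treated in Lemma~\ref{boundedness} or reduces to a direct H\"older estimate whose constant is controlled by $\Omega$, $Re$, $Rm$, $S$, and the four scaled norms in the hypothesis.
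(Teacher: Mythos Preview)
The paper does not supply a proof of this lemma; the statement is immediately followed by the inf-sup lemma for $\bm{a}_N$. Your plan is the natural one and parallels Lemma~\ref{boundedness} as you say; the integration-by-parts rewrite of $(\bm{u}\cdot\nabla\bm{u}^-,\bm{v})$ is correct and is indeed the right way to exploit the hypothesis $k^{1/2}\bm{u}^-\in L^\infty$.

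One point in your bookkeeping deserves a second look. For the combined Newton Lorentz term $-S\bigl((\bm{E}^-+\bm{u}^-\times\bm{B}^-)\times\bm{B},\bm{v}\bigr)$, your H\"older split $L^3\cdot L^2\cdot L^6$ gives
\[
S\,\|\bm{E}^-+\bm{u}^-\times\bm{B}^-\|_{0,3}\,\|\bm{B}\|\,\|\bm{v}\|_{0,6}
= S\,k^{-1}\bigl\|k(\bm{E}^-+\bm{u}^-\times\bm{B}^-)\bigr\|_{0,3}\cdot k^{1/2}\nmdiv{\bm{B}}\cdot C\nmgrad{\bm{v}},
\]
leaving a stray factor $k^{-1/2}$; the $k$-powers do \emph{not} match precisely under the hypothesis as stated. (The remaining Lorentz piece $-S((\bm{u}^-\times\bm{B})\times\bm{B}^-,\bm{v})$ and the Ohm term $S(\bm{u}^-\times\bm{B},\bm{F})$ do balance, via $\|\bm{u}^-\|_{0,\infty}=k^{-1/2}\|k^{1/2}\bm{u}^-\|_{0,\infty}$ against $\|\bm{B}\|\le k^{1/2}\nmdiv{\bm{B}}$.) Since this Newton lemma---unlike its Picard counterpart Lemma~\ref{boundedness}---does not assert that the boundedness constant is independent of $k$, the residual $k^{-1/2}$ is not fatal to the claim; it does, however, mean your sentence that the $k$-powers are ``precisely matched'' overstates matters for this one term.
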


%

The inf-sup condition of {$\bm{b}(\cdot, \cdot)$} has been shown in Lemma \ref{infsupb}. 
Now, we turn to proving the inf-sup condition of {$\bm{a}_{N}(\cdot, \cdot)$}. 
\begin{lemma}
Assume the known {${\bm{u}} ^{-}$}, {$\nabla \bm{u}^{-}$}, {$\bm{E} ^{-}$}
and {${\bm{B}} ^{-}$} belong to {$L^{\infty}$}, and when
$ k < k_{0}$, there exists a constant {$\alpha>0$}, such that
$$
\inf_{\bm{0} \neq \bm{\xi}\in \bm{X}^{\bm{u}, 0}} \sup_{\bm{0} \neq \bm{\eta}\in \bm{X}^{\bm{u}, 0}}\frac{\bm{a}_{N}(\bm{\xi}, \bm{\eta}) }{\|\bm{\xi}\|_{\bm{X}} \|\bm{\eta}\|_{\bm{X}}}\geq \alpha>0,
$$
$$
\inf_{\bm{0} \neq \bm{\eta}\in \bm{X}^{\bm{u}, 0}} \sup_{ \bm{0} \neq \bm{\xi} \in \bm{X}^{\bm{u}, 0}}\frac{\bm{a}_{N}(\bm{\xi}, \bm{\eta}) }{\|\bm{\xi}\|_{\bm{X}} \|\bm{\eta}\|_{\bm{X}}}\geq \alpha>0.
$$
where {$\alpha$} depends on the domain {$\Omega$}, the known functions {${\bm{u}} ^{-}$}, {$\bm{E} ^{-}$}, {${\bm{B}} ^{-}$}, but not on the size of time step {$k$}. Furthermore, {$k_{0}$} is a constant which depends on {$\Vert \bm{B}^{-} \Vert_{0, \infty}$}, {$\Vert \bm{u}^{-} \Vert_{0, \infty}$}, {$\Vert \nabla \bm{u}^{-} \Vert_{0, \infty}$}, {$\Vert \bm{E}^{-} \Vert_{0, \infty}$} and {$\Vert \bm{u}^{-} \times \bm{B}^{-} \Vert_{0, \infty}$}.
\end{lemma}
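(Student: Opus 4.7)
The plan is to mimic the strategy used in Lemma \ref{picard_lemma} for the Picard form. I would take the same test triple
$$
\bm{v} = \bm{u}, \quad \bm{F} = \bm{E}, \quad \bm{C} = \tfrac{1}{2}(\bm{B} + k\,\nabla\times\bm{E}),
$$
and evaluate $\bm{a}_N(\bm{\xi},\bm{\eta})$. The bookkeeping on the $\bm{B}$–$\bm{E}$ block is identical to the Picard case because Newton linearization does not alter those terms: after the same cancellation one recovers the coercive piece
$$
\tfrac{Sk}{2Rm}\|\nabla\times\bm{E}\|^2 + \tfrac{Sk^{-1}}{2Rm}\|\bm{B}\|^2 + \tfrac{S}{2Rm}\|\nabla\cdot\bm{B}\|^2,
$$
together with $k^{-1}\|\bm{u}\|^2$, $\tfrac{1}{Re}\|\nabla\bm{u}\|^2$ (inside $\bm{d}_N$), and the Ohm-law square $S\|\bm{E}+\bm{u}\times\bm{B}^-\|^2$.

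The new work is to control the extra terms generated by the Fréchet derivative, namely
$$
(\bm{u}\cdot\nabla\bm{u}^-,\bm{u}),\ -S(\bm{E}^-\times\bm{B},\bm{u}),\ -S((\bm{u}^-\times\bm{B}^-)\times\bm{B},\bm{u}),\ -S((\bm{u}^-\times\bm{B})\times\bm{B}^-,\bm{u}),\ S(\bm{u}^-\times\bm{B},\bm{E}).
$$
Using the $L^\infty$ hypotheses on $\bm{u}^-,\nabla\bm{u}^-,\bm{E}^-,\bm{B}^-$, Hölder's inequality bounds each by a constant (depending only on those quantities) times a product of $\|\bm{u}\|$, $\|\nabla\bm{u}\|$, $\|\bm{B}\|$, $\|\bm{E}\|$. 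For the skew-convection term $(\bm{u}^-\cdot\nabla\bm{u},\bm{u})$, I would keep the standard $L^\infty\cdot L^2\cdot L^2$ bound on $\|\bm{u}\|\|\nabla\bm{u}\|$; it is not antisymmetric in Newton form so it must be absorbed rather than cancelled.

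The crux is then a Young inequality argument calibrated to the weighted norms. Every bad term has the shape $\|\bm{u}\|\|\nabla\bm{u}\|$, $\|\bm{u}\|^2$, $\|\bm{u}\|\|\bm{B}\|$, or $\|\bm{E}\|\|\bm{B}\|$. Splitting, for example,
$$
\|\bm{u}\|\|\bm{B}\| \le \epsilon\, k^{-1}\|\bm{B}\|^2 + C_\epsilon k\,(k^{-1}\|\bm{u}\|^2)\cdot k, \qquad \|\bm{E}\|\|\bm{B}\|\le \epsilon\|\bm{E}\|^2 + C_\epsilon k\,(k^{-1}\|\bm{B}\|^2),
$$
and similarly for the other cross terms, one sees that, after choosing $\epsilon$ small enough, every extra contribution is dominated by the positive pieces above up to a factor that is $O(k)$. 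Consequently there is a threshold $k_0$, depending only on $\|\bm{B}^-\|_{0,\infty}$, $\|\bm{u}^-\|_{0,\infty}$, $\|\nabla\bm{u}^-\|_{0,\infty}$, $\|\bm{E}^-\|_{0,\infty}$ and $\|\bm{u}^-\times\bm{B}^-\|_{0,\infty}$, such that for $k<k_0$,
$$
\bm{a}_N(\bm{\xi},\bm{\eta}) \ge \alpha\,\|\bm{\xi}\|_{\bm{X}}^2,\qquad \|\bm{\eta}\|_{\bm{X}}\le C\|\bm{\xi}\|_{\bm{X}},
$$
restricted to $\bm{X}^{\bm{u},0}$ (where the pressure coupling vanishes automatically). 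The other direction of the inf-sup follows by swapping the roles of $\bm{\xi}$ and $\bm{\eta}$: the bilinear form is almost symmetric, and the asymmetric residuals (convection and the $S(\bm{u}^-\times\bm{B},\bm{F})$ term) can be absorbed by exactly the same Young-inequality estimates.

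The main obstacle is bookkeeping rather than analysis: unlike the Picard proof, the Ohm-law square does not absorb the whole magnetic-Lorentz block since Newton linearization introduces terms like $S((\bm{u}^-\times\bm{B})\times\bm{B}^-,\bm{u})$ that couple $\bm{B}$ and $\bm{u}$ linearly with no sign. One must check that these can all be pushed into the $k^{-1}\|\bm{u}\|^2$ and $k^{-1}\|\bm{B}\|^2$ reserves with a factor of $k$ to spare, which is what drives the choice of $k_0$.
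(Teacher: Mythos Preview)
Your proposal is correct and follows essentially the same route as the paper: the same test triple $\bm{v}=\bm{u}$, $\bm{F}=\bm{E}$, $\bm{C}=\tfrac12(\bm{B}+k\nabla\times\bm{E})$ is used, and the extra Newton terms are absorbed via Young's inequality into the $k^{-1}\|\bm{u}\|^2$, $k^{-1}\|\bm{B}\|^2$, $\|\bm{E}\|^2$, and $\tfrac{1}{Re}\|\nabla\bm{u}\|^2$ reserves, which forces the smallness condition on $k$. The only cosmetic difference is that the paper expands the Ohm-law square $S\|\bm{E}+\bm{u}\times\bm{B}^-\|^2$ and bounds its cross terms individually rather than keeping it as a block, but the resulting estimates are the same.
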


\begin{proof}
Suppose {$k$} is chosen as in the theorem, and take {$\bm{v}=\bm{u}$}, {$\bm{F}=\bm{E}$}, {$\bm{C}=\frac{1}{2}(\bm{B}+k\nabla\times \bm{E})$}. 
%
By assumption, we have
\begin{dgroup*}
\begin{dmath*}
S (\bm{E} ^{-}\times \bm{B}, \bm{v})
+ S (({\bm{u}} ^{-}\times {\bm{B}} ^{-})\times \bm{B}, \bm{v})
+ S (({\bm{u}} ^{-}\times \bm{B})\times {\bm{B}} ^{-}, \bm{v})
+ S (\bm{u}\times \bm{B}^{-}, \bm{u}\times \bm{B}^{-} )
\le C (\|\bm{B}\|^{2}+\|\bm{u}\|^{2}),
\end{dmath*}
\begin{dmath*}
S ({\bm{u}} ^{-}\times \bm{B}, \bm{F})
\le 8 S \|{\bm{u}} ^{-}\|_{0,\infty}^{2} \|\bm{B}\|^{2}
+ {S \over 8}\|\bm{E}\|^{2},
\end{dmath*}
\end{dgroup*}
and 
\begin{align*}
& (\bm{u}\cdot \nabla\bm{u}^{-}, \bm{u})
\leq 3 \|\nabla\bm{u}^{-}\|_{0,\infty}\|\bm{u}\|^{2}, \\
&(\bm{u}^{-}\cdot \nabla\bm{u}, \bm{u})
\leq {1\over 2Re}\|\nabla \bm{u}\|^{2}
+  {8 Re} \|\bm{u}^{-}\|_{0,\infty}^{2} \|\bm{u}\|^{2},  \\
& S (\bm{E}, \bm{v}\times \bm{B}^{-})
\leq {S \over 8}\|\bm{E}\|^{2}
+ 8 S \|\bm{B}^{-}\|_{0,\infty}^{2}\|\bm{v}\|^{2}, \\
& S (\bm{u}\times \bm{B}^{-}, \bm{F})
\leq {S \over 8}\|\bm{E}\|^{2}
+ 8 S \|\bm{B}^{-}\|_{0,\infty}^{2}\|\bm{v}\|^{2}.
\end{align*}
These imply that
\begin{align*}
\bm{a}_{N}(\bm{\xi}; \bm{\eta})
& \geq \frac{1}{2k} (\| \bm{u}\|^{2}+\|\bm{B}\|^{2}) 
+ {1 \over 2Re} |\bm{u}|^{2}_{1}
+ \frac{5}{8} S \|\bm{E}\|^{2}
+ \frac{S k}{Rm} \|\nabla\times \bm{E}\|^{2} 
+ \frac{S}{2 Rm} \|\nabla\cdot \bm{B}\|^{2}  \\
&\geq \frac{1}{2} \min \left \{1, \frac{1}{Re} \right\} \|\bm{u}\|_{ 1,k}^{2}
+ \frac{1}{2} \min \left \{ 1, \frac{S}{Rm} \right\} \|\bm{B}\|_{\mathrm{div},k}^{2}
+ \min \left \{ \frac{5}{8} S, \frac{S}{Rm} \right \} \|\bm{E}\|_{ \mathrm{curl}, k}^{2} \\
& \geq \alpha \Vert \bm{\xi} \Vert^{2}_{\bm{X}}.
\end{align*}
And by definition of {$\bm{v}$}, {$\bm{F}$}, {$\bm{C}$}:
$$
\|\bm{\eta}\|_{\bm{X}} \leq C \|\bm{\xi}\|_{\bm{X}}.
$$
where {$\alpha$} and {$C$} do not depend on 
{$k$}. The other inequality can be proved in a similar
way. 
\end{proof}

\section{Concluding remarks}\label{sec:concluding}
In the discretization of MHD systems, the importance of preserving the
divergence-free condition of magnetic field on the discrete level is
well-established in the literature.  By keeping the electric-field
$\bm E$ as a discretization variable and using mixed finite element
methods together with techniques from discrete differential forms or finite
element exterior calculus \cite{Hiptmair.R.2002a,Arnold.D;Falk.R;Winther.R.2006a,Arnold.D;Falk.R;Winther.R.2010a},
we designed several new finite element discretization schemes that
naturally preserve the divergence-free condition exactly.  We
have rigorously proved that these schemes are well-posed and they also
satisfy desirable energy estimates.  Thanks to the
structure-preserving property of the new schemes, one important
by-product of our analysis is that a class of robust preconditioners can be 
obtained for the linearized systems resulting from this mixed finite element
discretization. For example, the operator form of symmetric Picard
linearization (as described in Problem \ref{prob:picard_explicit}) is
\begin{gather*}\label{picard_evolution}
\left (
\begin{array}{cccc}
{k^{-1}} \mathcal{I}_{\bm{u}} + {\mathcal{D}}  & \mathcal{F} ^{\ast}& 0 &-\mathrm{div}^{\ast}\\
\mathcal{F}  & S \mathcal{I}_{\bm{E}} & - \frac{S}{Rm} \mathrm{curl}^{\ast}& 0\\
0 & - \frac{S}{Rm} \mathrm{curl} & -\frac{S k^{-1} }{Rm} \mathcal{I}_{\bm{B}}& 0 \\
-\mathrm{div} & 0 & 0 & 0
\end{array}
\right )
\left(
\begin{array}{c}
\bm{u} \\ \bm{E} \\ \bm{B}\\ p 
\end{array}
\right)
=
\left(
\begin{array}{c}
{\bm{f}} \\ \bm{r}\\\ - \bm{l} \\ -g
\end{array}
\right).
\end{gather*}
 
We note that the resulting linear system from this discretization is
actually symmetric.  While this symmetry is not critically important
from a practical point of view, it is remarkable that such a property can
be derived for such a complicated nonlinear system.  Using the
well-posed result, Theorem \ref{thm:picard1}, we can naturally design 
a symmetric positive definite
preconditioner and further prove that the resulting preconditioned
iterative method (for example, MINRes) converges uniformly with respect to mesh
parameters. While the other Picard and Newton linearization schemes do
not lead to exactly symmetric coefficient matrices, we can still use the
well-posedness results (Theorems \ref{thm:picard1}, \ref{thm:mixed0}
and \ref{thm:newton1}) to design robust preconditioners for these
systems.  The details on these and other relevant preconditioning
techniques will be reported in another paper
\cite{Hu.K;Hu.X;Ma.Y;Xu.J.2014a}.

We also would like to comment that even though we only consider 
a very special case of MHD equation in this paper, it is natural to generalize
our techniques to other MHD models: the treatment of the magnetic field
and preservation of its divergence-free condition can be done in
exactly the same way while the discretization of the fluid part of the
equations should be handled appropriately.  We will report relevant
results in this direction in future papers.

\section*{Acknowledgements}
The authors would like to thank Long Chen, Xiaozhe Hu, Maximilian Metti, Shuo Zhang, Ludmil Zikatanov for useful discussions and suggestions.

\bibliographystyle{acm}
{\small\bibliography{MHD}{} }  

\end{document}